\pgfplotsset{compat=1.10}
\numberwithin{equation}{section}
\newcommand{\extp}{\@ifnextchar^\@extp{\@extp^{\,}}}
\def\extp^#1{\mathop{\bigwedge\nolimits^{\!#1}}}
\theoremstyle{plain}
\newtheorem{teo}{Theorem}[section]
\newtheorem{prop}{Proposition}[section]
\newtheorem{cor}{Corollary}[section]
\newtheorem{lema}{Lemma}[section]
\theoremstyle{definition}
\newtheorem{defi}{Definition}[section]
\newtheorem{rem}{Remark}[section]
\begin{document}

\title{Automorphisms of descending mod-p central series}

\author{Ricard Riba}
\address{Universitat Autònoma de Barcelona, Departament de Matemàtiques, Bellaterra, Spain}
\email{riba@mat.uab.cat}
\thanks{This work was partially supported by MEC grant MTM2016-80439-P}

\subjclass[2010]{20D15, (20D45, 20F14, 20J06)}

\keywords{Automorphisms of p-groups, mod p central series, p-coverings.}

\date{\today}

\begin{abstract}
Given a free group $\Gamma$ of finite rank $n$ and a prime number $p,$ denote by  $\Gamma_k^\bullet$ the $k^\text{th}$ layer of the Stallings ($\bullet=S$) or Zassenhaus ($\bullet=Z$) $p$-central series, by
$\mathcal{N}_{k}^\bullet$ the quotient $\Gamma/\Gamma_{k+1}^\bullet$ and by $\mathcal{L}_{k}^\bullet$ the quotient $\Gamma_k^\bullet /\Gamma_{k+1}^\bullet.$
In this paper we prove that there is a non-central extension of groups
$
0 \longrightarrow Hom(\mathcal{N}^\bullet_1, \mathcal{L}^\bullet_{k+1}) \longrightarrow Aut\;\mathcal{N}^\bullet_{k+1} \longrightarrow  Aut \;\mathcal{N}^\bullet_k \longrightarrow 1,
$
which splits if and only if $k=1$ and $p$ is odd if $\bullet=Z$  or, $k=1$ and $(p,n)= (3,2), (2,2)$ if $\bullet=S$.
Moreover, if we denote by $IA^p(\mathcal{N}^\bullet_k )$ the subgroup of $Aut \;\mathcal{N}^\bullet_k$ formed by the automorphisms that acts trivially on $\mathcal{N}_1^\bullet,$ then the restriction of this extension to $IA^p(\mathcal{N}^\bullet_{k+1})$ give us a non-split central extension of groups
$
0 \longrightarrow Hom(\mathcal{N}^\bullet_1,\mathcal{L}^\bullet_{k+1}) \longrightarrow IA^p(\mathcal{N}^\bullet_{k+1}) \longrightarrow IA^p(\mathcal{N}^\bullet_k ) \longrightarrow 1.
$
\end{abstract}

\maketitle

\section{Introduction}
\label{sec_intro}
Given a free group of finite rank $\Gamma,$ denote by $\Gamma_k$ the $k^\text{th}$ layer of its lower central series, defined inductively by $\Gamma_1=\Gamma,$ $\Gamma_{k+1}=[\Gamma,\Gamma_k]$, by $\mathcal{N}_k$ the quotient $\Gamma /\Gamma_{k+1},$ and by $\mathcal{L}_k$ the quotient $\Gamma_k /\Gamma_{k+1}.$

In 1963 S. Andreadakis studied the automorphisms of free groups and free nilpotent groups. In particular he proved that for every $k\in \mathbb{N},$ the homomorphism $ Aut\; \mathcal{N}_{k+1}\rightarrow Aut \;\mathcal{N}_{k},$ induced by modding out the subgroup  $\mathcal{L}_{k+1} \subset \mathcal{N}_{k+1} $ is surjective (see \cite{andrea}).
Later, in 1993, S. Morita characterized the kernel of this epimorphism giving a non-central extension (see \cite{mor_ext}):
\begin{equation}
\label{ext_aut_intro}
\xymatrix@C=7mm@R=10mm{0 \ar@{->}[r] & Hom(\mathcal{N}_1, \mathcal{L}_{k+1}) \ar@{->}[r] & Aut\;\mathcal{N}_{k+1} \ar@{->}[r] & Aut \;\mathcal{N}_k \ar@{->}[r] & 1.}
\end{equation}
Moreover he proved that the restriction of this extension to $IA(\mathcal{N}_{k+1})$, the subgroup of $Aut\;\mathcal{N}_{k+1}$ formed by the elements that act trivially on $\mathcal{N}_1,$ becomes a central extension.
Then in 2001 W. Pitsch gave a functorial construction of extension \eqref{ext_aut_intro} and proved that it does not split for all $k\in \mathbb{N}$ (see \cite{pitsch2}).

While trying to generalize the aforementioned  work of S. Morita to $\mathbb{Z}/p$  we became interested in the analogous extensions but with the  mod $p$ Zassenhauss and Stallings central series instead of the lower central series.

This article is focused on the construction of these extensions, the study of their centrality and the existence of a splitting of these extensions.
To be more precise, analogously to the lower central series case, given a free group $\Gamma$ of finite rank $n$ and a prime number $p,$ denote by  $\Gamma_k^\bullet$ the $k$-layer of the Stallings ($\bullet=S$) or Zassenhaus ($\bullet=Z$) $p$-central series, by
$\mathcal{N}_{k}^\bullet$ the quotient $\Gamma/\Gamma_{k+1}^\bullet$ and by $\mathcal{L}_{k}^\bullet$ the quotient $\Gamma_k^\bullet /\Gamma_{k+1}^\bullet.$
The aim of this article is to prove the following result:

\begin{teo}
There is a non-central extension of groups
\begin{equation*}
\xymatrix@C=7mm@R=10mm{0 \ar@{->}[r] & Hom(\mathcal{N}^\bullet_1, \mathcal{L}^\bullet_{k+1}) \ar@{->}[r]^-{i} & Aut\;\mathcal{N}^\bullet_{k+1} \ar@{->}[r]^-{\psi_k^\bullet} & Aut \;\mathcal{N}^\bullet_k \ar@{->}[r] & 1,}
\end{equation*}
which splits if and only if $k=1$ and $p$ is odd if $\bullet=Z$  or, $k=1$ and $(p,n)= (3,2), (2,2)$ if $\bullet=S$.
Moreover, if we denote by $IA^p(\mathcal{N}^\bullet_k )$ the subgroup of $Aut \;\mathcal{N}^\bullet_k$ formed by the automorphisms that acts trivially on $\mathcal{N}_1^\bullet,$ then the restriction of this extension to $IA^p(\mathcal{N}^\bullet_{k+1})$ give us a non-split central extension of groups
\begin{equation*}
\xymatrix@C=7mm@R=10mm{0 \ar@{->}[r] & Hom(\mathcal{N}^\bullet_1,\mathcal{L}^\bullet_{k+1}) \ar@{->}[r]^-{i} & IA^p(\mathcal{N}^\bullet_{k+1}) \ar@{->}[r]^-{\psi^\bullet_k} & IA^p(\mathcal{N}^\bullet_k ) \ar@{->}[r] & 1. }
\end{equation*}
\end{teo}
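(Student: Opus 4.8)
The plan is to build the sequence by hand, identify its kernel, and then split the splitting question into a generic part — governed by the commutator pairing on the $IA^p$ subgroups when $k\ge2$ — and an exceptional part, the case $k=1$, governed by the cohomology of $GL_n(\mathbb F_p)$.

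First I would record the structural facts that make the sequence exist: each $\Gamma_{k+1}^\bullet$ is verbal, hence characteristic, so every $\phi\in Aut\,\mathcal N^\bullet_{k+1}$ preserves the central subgroup $\mathcal L^\bullet_{k+1}=\ker(\mathcal N^\bullet_{k+1}\to\mathcal N^\bullet_k)$ and descends to an automorphism $\psi^\bullet_k(\phi)$ of $\mathcal N^\bullet_k$. Surjectivity of $\psi^\bullet_k$ is the Andreadakis-type lifting argument: lift $\bar\phi(\bar x_i)$ arbitrarily to $\Gamma$, use freeness to obtain an endomorphism of $\mathcal N^\bullet_{k+1}$, and note that an endomorphism of a finite $p$-group that is onto modulo the Frattini quotient $\mathcal N^\bullet_1$ is an automorphism. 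For the kernel, if $\phi$ induces the identity on $\mathcal N^\bullet_k$ then $f_\phi(g):=\phi(g)g^{-1}$ lands in $\mathcal L^\bullet_{k+1}$; centrality of $\mathcal L^\bullet_{k+1}$ makes $f_\phi$ a homomorphism, and since the target is an $\mathbb F_p$-vector space it factors through $\mathcal N^\bullet_1$. The assignment $\phi\mapsto f_\phi$ is inverse to $i$, giving $\ker\psi^\bullet_k\cong Hom(\mathcal N^\bullet_1,\mathcal L^\bullet_{k+1})$.

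Non-centrality and the $IA^p$-centrality both flow from one computation: $\phi\, i(f)\,\phi^{-1}=i\bigl(\phi_*\circ f\circ(\phi^\ast)^{-1}\bigr)$, where $\phi^\ast$ and $\phi_*$ are the induced actions on $\mathcal N^\bullet_1$ and on $\mathcal L^\bullet_{k+1}$. For general $\phi$ the induced $GL_n(\mathbb F_p)$-action on $Hom(\mathcal N^\bullet_1,\mathcal L^\bullet_{k+1})=(\mathcal N^\bullet_1)^\ast\otimes\mathcal L^\bullet_{k+1}$ is visibly nontrivial, so the full extension is non-central. If $\phi\in IA^p(\mathcal N^\bullet_{k+1})$ then $\phi^\ast=\mathrm{id}$; moreover $\phi$ acts on the associated graded restricted Lie algebra by an automorphism determined in degree $1$, so $\phi_*=\mathrm{id}$ on $\mathcal L^\bullet_{k+1}$ as well, and the restricted extension is central. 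One checks the kernel is unchanged (each $f_\phi$ already acts trivially on $\mathcal N^\bullet_1$) and that the lifting stays inside $IA^p$, so the second sequence is exact.

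The splitting analysis is the crux, and I would organise it around the dichotomy $k\ge2$ versus $k=1$. For $k\ge2$ the base $IA^p(\mathcal N^\bullet_k)$ is nontrivial and I would show the central extension does not split through its commutator pairing: a split central extension forces lifts of commuting elements to commute, so it suffices to exhibit $\phi,\phi'\in IA^p(\mathcal N^\bullet_k)$ that commute while their lifts have $[\tilde\phi,\tilde\phi']\ne1$ in $Hom(\mathcal N^\bullet_1,\mathcal L^\bullet_{k+1})$. Taking $\phi,\phi'$ to modify two distinct generators by top-layer elements, the commutator is computed by a bracket in the associated graded Lie algebra, which is nonzero because that algebra is free; a convenient choice makes the resulting homomorphism nonzero. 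Since a section of the full extension restricts to a section over $IA^p$, this also forces the full extension to be non-split for every $k\ge2$. The genuinely hard case is $k=1$, where $IA^p(\mathcal N^\bullet_1)$ is trivial and the obstruction lives in $H^2\bigl(GL_n(\mathbb F_p);(\mathcal N^\bullet_1)^\ast\otimes\mathcal L^\bullet_2\bigr)$. For $\bullet=Z$ with $p$ odd I would produce an explicit section from the Lazard correspondence: $\mathcal N^Z_2$ is then in the Lazard range, so it is $GL(V)$-equivariantly modelled by $V\oplus\Lambda^2V$ with $g\cdot(v,w)=(gv,\Lambda^2g\,w)$, and this graded action is the splitting (the $\tfrac12$ in the group law needs $p\neq2$). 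The remaining cases require showing the class is nonzero, and the exceptional splittings at $(p,n)=(3,2),(2,2)$ for $\bullet=S$ must be read off from a direct computation of the low-degree cohomology of $GL_2(\mathbb F_3)$ and $GL_2(\mathbb F_2)$ together with the explicit $GL(V)$-module structure $\mathcal L^S_2=\Lambda^2V\oplus V^{(1)}$. Pinning down exactly this cohomology — separating the sporadic small-rank, small-prime splittings from the generic obstruction — is where I expect the real work to lie.
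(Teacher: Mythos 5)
Your architecture is sound, and several pieces are legitimate alternatives to the paper's route: you identify surjectivity and the kernel by the elementary Andreadakis/Frattini argument where the paper develops the machinery of universal $p$-coverings; your Lazard--BCH equivariant model of $\mathcal{N}^Z_2$ for odd $p$ replaces the paper's Center Kills argument; and your ``commuting elements with non-commuting lifts'' obstruction is exactly the obstruction the paper extracts from the $5$-term exact sequence. But two steps are genuine gaps, not just compressions.

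First, your witnesses for non-splitting over $IA^p$ fail for $k\ge 3$. If $\phi$ and $\phi'$ both modify generators by elements of the top layer $\mathcal{L}^\bullet_k$, then both lie in $IA^\bullet_{k-1}(\mathcal{N}^\bullet_k)$, and the commutator of any lifts lies in $IA^\bullet_{2k-2}(\mathcal{N}^\bullet_{k+1})$, which is trivial as soon as $2k-2\ge k+1$: the bracket you invoke sits in degree $2k>k+1$ of the associated graded and dies in $\mathcal{N}^\bullet_{k+1}$. The degrees must sum to exactly $k+1$, so the pair has to be asymmetric --- for instance conjugation by $x_1$ (in $IA^p$) against conjugation by a length-$(k-1)$ commutator $\gamma_2$ (in $IA^\bullet_{k-1}\subset IA^p$); these commute in $Aut\,\mathcal{N}^\bullet_k$ because $[IA^\bullet_1,IA^\bullet_{k-1}]<IA^\bullet_k$, and the commutator of their lifts is measured by the length-$(k+1)$ commutator $[[x_1,\gamma_2],x_2]$, whose nonvanishing in $\mathcal{L}^\bullet_{k+1}$ still has to be argued from the structure of the free restricted Lie algebra and the explicit description of the layers. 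Note also that both this inclusion and your claim that $IA^p$ acts trivially on $\mathcal{L}^\bullet_{k+1}$ ``because the graded automorphism is determined in degree $1$'' rest on $[IA^\bullet_k(G),G^\bullet_l]<G^\bullet_{k+l}$; for the Zassenhaus series the compatibility with $p$-th powers hidden in that statement is precisely where the paper has to deploy Hall's collection formula for $(PQ)^{p^\alpha}$, so it cannot simply be asserted.

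Second, the case $k=1$ with $\bullet=S$ (and with $\bullet=Z$, $p=2$) is announced but not proved, and it is where the $(p,n)$-dependence of the theorem actually lives, so no soft argument will close it. The paper's route is to push the extension out, via the quotient $\Gamma/\Gamma_2\Gamma^{p^2}$, onto $0\to\mathfrak{gl}_n(\mathbb{Z}/p)\to GL_n(\mathbb{Z}/p^2)\to GL_n(\mathbb{Z}/p)\to 1$, import the known (non-)splitting of $SL_n(\mathbb{Z}/p^2)\to SL_n(\mathbb{Z}/p)$ together with a computation of $H^1(SL_n(\mathbb{Z}/p);\mathbb{Z}/p)$ to get non-splitting generically, and then settle the exceptional pairs by restricting to the upper-triangular $p$-Sylow subgroup, writing explicit sections for $(3,2)$ and $(2,2)$ and running a seven-term exact sequence computation to exclude $(2,3)$. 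Something of comparable precision is unavoidable here, and your proposal leaves it entirely open.
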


The plan of this work is the following:

In Section~\ref{sec_prelim}, we give some background about commutator calculus, $p$-central series and $p$-coverings.
In Section~\ref{sec_auto}, we give the functorial construction of the extensions  and we study their centrality.
Finally, in Section~\ref{sec_split}, we discuss the existence of a splitting of these extensions.

Throughout this paper we let $G$ denote an arbitrary group and $p$ an arbitrary prime number. 

\section{Preliminaries}
\label{sec_prelim}
In this Section we assemble classical results, maybe in a not so classical presentation in order to keep the whole work reasonably self-contained. Experts may safely skip this Section.
\subsection{Commutator calculus}
\label{sec_comm}
Given two elements $x,y$ of a fixed group $G,$ we denote by $[x,y]=xyx^{-1}y^{-1}$ their commutator and by $x^y=yxy^{-1}$ the conjugate of $x$ by $y.$ The commutator and the conjugate are related by the following classical identities (c.f. Chapter 10, \cite{hall4}):
\begin{enumerate}
\item $x^y=[y,x]x,$
\item $[y,x]=[x,y]^{-1},$
\item $[x,y^{-1}]=[y,x]^{y^{-1}} \text{ and } [x^{-1},y]= [y,x]^{x^{-1}},$
\item $[xy,z]=[y,z]^x[x,z] \text{ and } [x,yz]=[x,y][x,z]^y,$
\item $[x^y,[y,z]][y^z,[z,x]][z^x,[y,x]]=1 \text{ (Hall-Witt identity)}.$
\end{enumerate}
There is a more general version of these identities, due to
P. Hall in \cite{hall2}.
Usually when doing commutator calculus (c.f. Chapter 5, \cite{magnus1}) one atributes to generators a weigh $1,$ to commutator a weigh $2$ and so on. By introducing the notion of \textit{complex commutator with weights,} P. Hall generalized this by allowing weight elements in a group in a much more flexible way.

\begin{defi}[complex commutator, \cite{hall2}]
Let $P_1,P_2,\ldots, P_r$ be any $r$ elements of a group $G.$ We shall define by induction what we mean by a \textit{complex commutator of weight $w$ in the components $P_1,P_2,\ldots, P_r.$} The complex commutators of weight $1$ are the elements $P_1,P_2,\ldots, P_r$ themselves.
Supposing the complex commutators of all weights less than $w$ have already been defined, then those of weight $w$ consist of all the expressions of the form $[S,T],$ where $S$ and $T$ are any complex commutators of weights $w_1$ and $w_2$ in the components $P_1,P_2,\ldots P_r$ respectively, such that $w_1+w_2=w.$

The weight of a complex commutator is, of course, always relative to a choice of components; these must be specified before weight can be determined.
For example, $[[P,Q],R]$ is of weight $3$ in the three components $P,$ $Q$ and $R;$ but of weight $2$ in the two components $[P,Q]$ and $R.$

\end{defi}

\begin{teo}[P. Hall, Theorem 3.2. in \cite{hall2}]
\label{teo_hall}
If $p$ is a prime, $\alpha$ is a positive integer, $P$ and $Q$ are any two elements of a group $G,$ and
$$R_1,R_2,\ldots ,R_i,\ldots \qquad (R_1=P,\quad R_2=Q)$$
are the various formally distinct complex commutators of $P$ and $Q$ arranged in order of increasing weights, then integers $n_1,n_2,\ldots ,n_i,\ldots$ can be found $(n_1=n_2=p^\alpha)$ such that
$$(PQ)^{p^{\alpha}}=R_1^{n_1}R_2^{n_2}\cdots R_i^{n_i}\cdots$$
and if the weight $w_i$ of $R_i$ in $P$ and $Q$ satisfies the inequality $p^{\beta-1} \leq w_i<p^\beta \leq p^\alpha,$ then $n_i$ is divisible by $p^{\alpha-\beta +1}.$
\end{teo}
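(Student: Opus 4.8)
The plan is to prove the identity inside the free group $F=\langle P,Q\rangle$, from which it follows in an arbitrary group $G$ by specialising $P,Q$ through the evaluation homomorphism, both sides being words in $P,Q$. Write $F_w$ for the $w$-th term of the lower central series of $F$, so that each quotient $F_w/F_{w+1}$ is a free abelian group with basis the basic (complex) commutators of weight exactly $w$ in $P,Q$. The first step is the \emph{collection process}: repeated use of the identity $[xy,z]=[y,z]^x[x,z]$ from Subsection~\ref{sec_comm} lets one rewrite any positive word in $P,Q$, modulo $F_{c+1}$ for any prescribed $c$, as an ordered product $R_1^{n_1}R_2^{n_2}\cdots$ with the $R_i$ listed by increasing weight. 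Applying this to $(PQ)^{p^\alpha}$ and letting $c\to\infty$ produces the asserted normal form with integer exponents $n_i$; projecting the identity to the abelianisation $F/F_2\cong\mathbb{Z}^2$, where every $R_i$ with $i\ge 3$ dies, reads off $n_1=n_2=p^\alpha$ at once.

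The divisibility statement rests on one arithmetic fact: for $1\le w\le p^\alpha$ the $p$-adic valuation satisfies $v_p\binom{p^\alpha}{w}=\alpha-v_p(w)$, which I would derive from Kummer's or Legendre's formula. Hence if $p^{\beta-1}\le w<p^\beta\le p^\alpha$ then $v_p(w)\le\beta-1$, so $p^{\alpha-\beta+1}\mid\binom{p^\alpha}{w}$; these binomial coefficients are precisely the exponents with which a weight-$w$ commutator is first created during collection, when one selects which of the $p^\alpha$ available letters will be brought together to form it.

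For the exponents $n_i$ themselves I would argue by induction on $\alpha$. The base case $\alpha=1$ amounts to the classical expansion of $(PQ)^p$: here only $\beta=1$ occurs, and one checks that every complex commutator of weight $w$ with $1\le w<p$ appears with exponent divisible by $p$. For the inductive step, write $(PQ)^{p^\alpha}=\bigl((PQ)^{p^{\alpha-1}}\bigr)^p$ and substitute the normal form $(PQ)^{p^{\alpha-1}}=\prod_i R_i^{m_i}$ provided by the inductive hypothesis, where $v_p(m_i)\ge\alpha-\beta$ for $p^{\beta-1}\le w_i<p^\beta\le p^{\alpha-1}$. Raising this ordered product to the $p$-th power and re-collecting grade by grade in $F_w/F_{w+1}$, the exponent of a weight-$w$ commutator receives a \emph{diagonal} contribution $p\cdot m_i$, which gains one extra factor of $p$ and so already has valuation $\ge\alpha-\beta+1$, together with \emph{mixed} contributions produced by the reordering, each an integer multiple of a product of exponents $m_j$ of strictly smaller weight and carrying further factors of $p$ from the base case applied to the two-factor subproducts.

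The main obstacle is exactly the bookkeeping for these mixed terms. Since weights add under bracketing while the threshold exponent $\beta$ grows only logarithmically in the weight, one must show that the valuations accumulated from the smaller-weight factors, combined with the base-case factors of $p$ and the fresh binomial contributions $\binom{p^\alpha}{w}$, always sum to at least $\alpha-\beta+1$ in the relevant range; a crude "valuations add" estimate is not by itself sufficient, and one must track the binomial structure of each mixed term rather than merely its valuation. Controlling the free abelian structure of $F_w/F_{w+1}$ needed to isolate each weight uses the Jacobi-type relations coming from the Hall--Witt identity of Subsection~\ref{sec_comm}. Carrying out this estimate uniformly in $w$ and $\alpha$ is the heart of the argument, and is where Hall's original collection analysis does the real work.
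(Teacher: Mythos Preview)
The paper does not give its own proof of this statement: Theorem~\ref{teo_hall} is quoted verbatim from Hall's paper \cite{hall2} (as Theorem~3.2 there) and is used as a black box in the proof of Lemma~\ref{lema_coop_general}. There is therefore no in-paper argument to compare your proposal against.

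That said, your outline follows the classical route---reduce to the free group on two generators, apply the Hall collection process to obtain the normal form, read off $n_1=n_2=p^\alpha$ from the abelianisation, and control the $p$-divisibility of the remaining exponents via the valuation of $\binom{p^\alpha}{w}$---which is essentially Hall's own strategy. Your honest flag on the ``mixed terms'' is the real issue: in the inductive step $(PQ)^{p^\alpha}=\bigl((PQ)^{p^{\alpha-1}}\bigr)^p$, re-collecting a product of many factors $R_i^{m_i}$ does not reduce to an iterated two-factor base case in any clean way, and the claim that the accumulated valuations always reach $\alpha-\beta+1$ is precisely what Hall's detailed collection analysis establishes. As written your sketch identifies the correct mechanism but does not close this gap; to make it a proof you would need to either reproduce Hall's weighted bookkeeping or give an alternative uniform estimate for the exponent of each basic commutator arising in the re-collection.
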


\subsection{On $p$-central series}
We come now to the basic objects of our study. In full generality
a \textit{$p$-central series of $G$} is a sequence of subgroups
$ \{1\} \triangleleft A_n \triangleleft \ldots \triangleleft A_2
\triangleleft A_1 =G$
such that each quotient $A_i /A_{i+1}$ is central in $G/A_{i+1}$
and $p$-elementary abelian.

We will focus on two universal $p$-central series:
\begin{itemize}
\item The Zassenhaus mod-$p$ central series $\{G_k^Z\}$,
which is the fastest descending series satisfying that
$[G^Z_k,G^Z_l]< G^Z_{k+l},\quad (G_k^Z)^p < G^Z_{pk}.$
\item The Stallings mod-$p$ central series $\{G^S_k\}$ (also known as the lower $p$-central series), which is the fastest descending series satisfying
$[G_k^S, G^S_l]< G^S_{k+l},\quad (G^S_k)^p < G^S_{k+1}.$
\end{itemize}

The different layers of these series have a concrete description as follows:
  
\begin{defi}[Zassenhaus]
Given a group $G,$ the lower central series $\{G_k\}_k$ of $G$ is defined inductively by
$G_1=G$ and $G_k=[G,G_{k-1}].$
Then the Zassenhaus mod-$p$ central series $\{G_k^Z\}$ of $G$ is defined by the rule:
\begin{equation}
\label{zassenhaus_series_def}
G_k^Z=\prod_{ip^j\geq k} (G_i)^{p^j}.
\end{equation}
\end{defi}

\begin{defi}[Stallings]
The Stallings mod-$p$ central series $\{G^S_k\}$ of $G$ is defined inductively by:
$$G^S_1=G \quad \text{ and }\quad G^S_k=[G,G_{k-1}^S](G_{k-1}^S)^p.$$
In \cite[page 242]{hup2} the author proves that the Stallings series has a description analogous to the Zassenhaus series, namely:
\begin{equation}
\label{stallings_series_alternative_def}
G_k^S=\prod_{i+j= k} (G_i)^{p^j}.
\end{equation}
\end{defi}

Notice that by the universal properties of these series we have the inclusions $G^S_k < G^Z_k$ for all $k\in \mathbb{N}.$

We now specialize these definitions for a free group $\Gamma$ of finite rank. Set:
$$\mathcal{N}^\bullet_k=\Gamma/\Gamma^\bullet_{k+1},\qquad \mathcal{L}^\bullet_k=\Gamma^\bullet_k/\Gamma^\bullet_{k+1},$$
$$\widetilde{\mathcal{N}_{k}^\bullet}=\frac{\Gamma}{[\Gamma,\Gamma^\bullet_{k}](\Gamma^\bullet_{k})^p}, \qquad \widetilde{\mathcal{L}_{k}^\bullet}=\frac{\Gamma^\bullet_{k}}{[\Gamma,\Gamma^\bullet_{k}](\Gamma^\bullet_{k})^p},$$
where $\bullet=S \text{ or } Z.$

Feeding back these quotients into the construction of the both central series and computing as in Theorem 5.3 of \cite{magnus1} we get

\begin{prop}
\label{prop_ident_L_versal}
For every $k\geq 1,$ there are isomorphisms
$$
\mathcal{L}_k^S\cong(\mathcal{N}_k^S)^S_k, \quad \mathcal{L}_k^Z\cong(\mathcal{N}_k^Z)^Z_k, \quad \widetilde{\mathcal{L}_k^Z}\cong(\widetilde{\mathcal{N}_k^Z})^Z_k, \quad\frac{\Gamma^Z_{k+1}}{[\Gamma, \Gamma^Z_{k}](\Gamma^Z_{k})^p}\cong(\widetilde{\mathcal{N}_{k}^Z})^Z_{k+1}.
$$
\end{prop}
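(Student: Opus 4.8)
The plan is to reduce all four isomorphisms to a single functoriality principle: for any surjective homomorphism $\pi \colon G \to \overline{G}$ and any $k \geq 1$ one has $\pi(G_k^\bullet) = \overline{G}_k^\bullet$, where $\bullet \in \{S,Z\}$. Each of the four statements is then obtained by applying this principle to a suitable quotient of $\Gamma$ and identifying the image of the relevant layer.

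First I would establish the functoriality principle. For the Stallings series I would argue by induction on $k$ from the recursive definition $G_k^S = [G,G_{k-1}^S](G_{k-1}^S)^p$: assuming $\pi(G_{k-1}^S) = \overline{G}_{k-1}^S$, the identities $\pi([G,H]) = [\overline{G},\pi(H)]$ and $\pi(H^p) = \pi(H)^p$ for a normal subgroup $H \triangleleft G$ yield $\pi(G_k^S) = [\overline{G},\overline{G}_{k-1}^S](\overline{G}_{k-1}^S)^p = \overline{G}_k^S$. For the Zassenhaus series I would instead use the closed formula \eqref{zassenhaus_series_def}: the ordinary lower central series is functorial, so $\pi(G_i) = \overline{G}_i$, and since $\pi\big((G_i)^{p^j}\big) = (\overline{G}_i)^{p^j}$ while the image of a join of normal subgroups is the join of the images, the index set $\{(i,j) : ip^j \geq k\}$ being independent of the group gives $\pi(G_k^Z) = \prod_{ip^j \geq k}(\overline{G}_i)^{p^j} = \overline{G}_k^Z$.

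With this in hand I would treat the four cases. For the first two I apply the principle to $\pi \colon \Gamma \to \mathcal{N}_k^\bullet = \Gamma/\Gamma_{k+1}^\bullet$; since the series is descending, $\Gamma_{k+1}^\bullet \subseteq \Gamma_k^\bullet$, so by the second isomorphism theorem $\pi(\Gamma_k^\bullet) = \Gamma_k^\bullet\Gamma_{k+1}^\bullet/\Gamma_{k+1}^\bullet = \Gamma_k^\bullet/\Gamma_{k+1}^\bullet = \mathcal{L}_k^\bullet$, and functoriality identifies this image with $(\mathcal{N}_k^\bullet)_k^\bullet$. For the last two, set $M_k = [\Gamma,\Gamma_k^Z](\Gamma_k^Z)^p$ and apply the principle to $q \colon \Gamma \to \widetilde{\mathcal{N}_k^Z} = \Gamma/M_k$. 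The defining properties of the Zassenhaus series give $[\Gamma,\Gamma_k^Z] \subseteq \Gamma_{k+1}^Z$ and $(\Gamma_k^Z)^p \subseteq \Gamma_{pk}^Z \subseteq \Gamma_{k+1}^Z$ (using $pk \geq k+1$ for $p \geq 2$ and $k \geq 1$), hence $M_k \subseteq \Gamma_{k+1}^Z \subseteq \Gamma_k^Z$. Therefore $q(\Gamma_k^Z) = \Gamma_k^Z/M_k = \widetilde{\mathcal{L}_k^Z}$ and $q(\Gamma_{k+1}^Z) = \Gamma_{k+1}^Z/M_k$, and functoriality identifies these with $(\widetilde{\mathcal{N}_k^Z})_k^Z$ and $(\widetilde{\mathcal{N}_k^Z})_{k+1}^Z$ respectively.

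I expect the only genuine subtlety to lie in the Zassenhaus functoriality step: one must check that the closed formula defines an honest subgroup (a join of normal subgroups) and that taking images commutes with this join over the fixed numerical index set $\{ip^j \geq k\}$. Once this is granted, the remainder is the elementary bookkeeping of the inclusions $M_k \subseteq \Gamma_{k+1}^Z \subseteq \Gamma_k^Z$ together with the second isomorphism theorem, so I do not anticipate any serious obstacle beyond keeping the Stallings and Zassenhaus cases notationally separate.
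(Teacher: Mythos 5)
Your proof is correct, and it is essentially the argument the paper has in mind: the paper gives no written proof but simply invokes the computation of Theorem 5.3 in \cite{magnus1}, which is exactly the functoriality of these verbal series under surjections that you spell out. The reduction to $\pi(G_k^\bullet)=\overline{G}_k^\bullet$ followed by the inclusions $M_k\subseteq\Gamma_{k+1}^Z\subseteq\Gamma_k^Z$ and the second isomorphism theorem is the intended route.
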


By construction, the groups $\mathcal{N}^\bullet_k,$ $\mathcal{L}^\bullet_k,$ $\widetilde{\mathcal{N}_{k}^\bullet},$ $\widetilde{\mathcal{L}_{k}^\bullet}$ fit into a push-out diagram of central group extensions:
\begin{equation}
\label{p-central_ext_push-out}
\xymatrix@C=7mm@R=10mm{
0 \ar@{->}[r] &\widetilde{\mathcal{L}_{k+1}^\bullet} \ar@{->}[r] \ar@{->}[d] & \widetilde{\mathcal{N}_{k+1}^\bullet} \ar@{->}[r] \ar@{->}[d] & \mathcal{N}^\bullet_k \ar@{->}[r] \ar@{=}[d] & 1 \\
0 \ar@{->}[r] & \mathcal{L}_{k+1}^\bullet \ar@{->}[r] &\mathcal{N}_{k+1}^\bullet \ar@{->}[r] & \mathcal{N}^\bullet_k \ar@{->}[r] & 1.}
\end{equation}
Centrality of these extensions is checked by direct computations using Hall identities.

\begin{rem}
Notice that these two central extensions coincide for the Stallings case and differ for the Zassenhaus case.
\end{rem}

\subsection{On $p$-coverings}
It turns out, as we will see, that the extensions in diagram \eqref{p-central_ext_push-out} are not arbitrary, they are $p$-coverings.
Recall that if $G$ is a group, then its \textit{ Frattini subgroup $\mathcal{F}(G)$} is the intersection of all maximal proper subgroups of $G.$
The following definition is inspired on Section 9.5 of \cite{leed}.

\begin{defi}
A \textit{$p$-covering of $G$} is a central extension of the form
$$\xymatrix@C=7mm@R=10mm{ 0 \ar@{->}[r] &  A \ar@{->}[r] & P \ar@{->}[r] & G \ar@{->}[r] & 1},$$
where $A$ is a $\mathbb{Z}/p$-vector space and $A\leq \mathcal{F}(P).$
In this case we say that $P$ is a $p$-covering group of $G.$
We will say that this extension is \textit{the universal $p$-covering of $G$} and $P$ is \textit{the universal $p$-covering group of $G$} (up to canonical isomorphisms),
if for any other $p$-covering
$\xymatrix@C=7mm@R=10mm{ 0 \ar@{->}[r] &  A' \ar@{->}[r] & E \ar@{->}[r] & G \ar@{->}[r] & 0},$
there is a push-out diagram
$$\xymatrix@C=7mm@R=10mm{ 0 \ar@{->}[r] &  A \ar@{->}[r] \ar@{->}[d] & P \ar@{->}[d] \ar@{->}[r] & G \ar@{->}[r] \ar@{=}[d] & 1 \\
0 \ar@{->}[r] &  A' \ar@{->}[r] & E \ar@{->}[r] & G \ar@{->}[r] & 1.}$$
\end{defi}

\textbf{If $G$ is a finite $p$-group,} by Proposition 1.2.4 in \cite{leed}, the Frattini subgroup of $G$ is
$\mathcal{F}(G)=[G,G]G^p$ and the rank of the quotient $G/\mathcal{F}(G)$ coincides with $d(G),$ the minimal cardinality of a generating set of $G.$

In this case, given $\xymatrix@C=5mm@R5mm{
0 \ar@{->}[r] & R \ar@{->}[r] & F \ar@{->}[r] & G \ar@{->}[r] & 1}$ a presentation of $G$ with $d(F)=d(G),$
we have that $R\leq [F,F]F^p.$ Moreover applying the mod $p$ Hopf formula, given in \cite{graham}, to this presentation of $G$ we obtain that $H_2(G;\mathbb{Z}/p)\cong R/[R,F]R^p.$ 

Then the quotient of this presentation by $[F,R]R^p$ give us a $p$-covering of $G$
$$
\xymatrix@C=7mm@R=10mm{
0 \ar@{->}[r] & H_2(G;\mathbb{Z}/p) \ar@{->}[r] & \frac{F}{[F,R]R^p} \ar@{->}[r] & G \ar@{->}[r] & 1. }
$$
By Proposition 9.5.13 in \cite{leed}, this is in fact the universal $p$-covering of $G.$

Back to our object of interest, observe that the groups $\mathcal{N}_k^\bullet$ are $p$-groups. Moreover:

\begin{prop}
\label{prop_univ_p-cov_stall_zass}
The central extension $\xymatrix@C=5mm@R=5mm{
0 \ar@{->}[r] & H_2(\mathcal{N}_k^\bullet;\mathbb{Z}/p) \ar@{->}[r] &\widetilde{\mathcal{N}_{k+1}^\bullet} \ar@{->}[r] & \mathcal{N}^\bullet_k \ar@{->}[r] & 1 }$ is the universal $p$-covering of $\mathcal{N}_{k}^\bullet.$
\end{prop}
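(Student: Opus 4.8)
The plan is to recognize the extension as an instance of the general universal $p$-covering construction recalled above, applied to the presentation of $\mathcal{N}_k^\bullet$ furnished by $\Gamma$ itself. Concretely, I would take $F=\Gamma$, $R=\Gamma_{k+1}^\bullet$ and $G=\mathcal{N}_k^\bullet=\Gamma/\Gamma_{k+1}^\bullet$, so that the short exact sequence $0\to \Gamma_{k+1}^\bullet \to \Gamma \to \mathcal{N}_k^\bullet \to 1$ becomes a presentation of $\mathcal{N}_k^\bullet$. The crucial point is purely definitional: the universal $p$-covering group of $G$ produced by the construction is $F/[F,R]R^p$, and with these identifications $F/[F,R]R^p=\Gamma/[\Gamma,\Gamma_{k+1}^\bullet](\Gamma_{k+1}^\bullet)^p$, which is exactly $\widetilde{\mathcal{N}_{k+1}^\bullet}$ by definition. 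The mod $p$ Hopf formula then identifies the kernel $\Gamma_{k+1}^\bullet/[\Gamma,\Gamma_{k+1}^\bullet](\Gamma_{k+1}^\bullet)^p=\widetilde{\mathcal{L}_{k+1}^\bullet}$ with $H_2(\mathcal{N}_k^\bullet;\mathbb{Z}/p)$, matching the stated extension.

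Before invoking the general theorem, I must check its two hypotheses. First, $\mathcal{N}_k^\bullet$ is a finite $p$-group, which holds since each layer $\mathcal{L}_i^\bullet$ is a finite-dimensional $\mathbb{Z}/p$-vector space and there are finitely many of them. Second --- and this is the only genuine verification --- the presentation must be minimal, that is $d(\Gamma)=d(\mathcal{N}_k^\bullet)$. Since $\Gamma$ is free of rank $n$ we have $d(\Gamma)=n$, so it suffices to compute the Frattini quotient of $\mathcal{N}_k^\bullet$. Using $\mathcal{F}(\mathcal{N}_k^\bullet)=[\mathcal{N}_k^\bullet,\mathcal{N}_k^\bullet](\mathcal{N}_k^\bullet)^p$ together with $\Gamma_{k+1}^\bullet\leq \Gamma_2^\bullet$ for $k\geq 1$, one obtains
$$\mathcal{N}_k^\bullet/\mathcal{F}(\mathcal{N}_k^\bullet)\cong \Gamma/[\Gamma,\Gamma]\Gamma^p=\Gamma/\Gamma_2^\bullet\cong(\mathbb{Z}/p)^n,$$
where the last step uses the common value $\Gamma_2^S=\Gamma_2^Z=[\Gamma,\Gamma]\Gamma^p$. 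Hence $d(\mathcal{N}_k^\bullet)=n=d(\Gamma)$ and the presentation is minimal. This computation also confirms the ambient hypothesis $R=\Gamma_{k+1}^\bullet\leq[\Gamma,\Gamma]\Gamma^p$ required by the construction.

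With both hypotheses in place, I would conclude by directly applying Proposition 9.5.13 in \cite{leed} (the general statement recalled above), which asserts precisely that $0\to H_2(G;\mathbb{Z}/p)\to F/[F,R]R^p \to G\to 1$ is the universal $p$-covering of $G$ whenever the presentation is minimal. I expect no serious obstacle: the content lies entirely in correctly lining up the definition of $\widetilde{\mathcal{N}_{k+1}^\bullet}$ with the normal form $F/[F,R]R^p$ and in the short Frattini-quotient computation establishing minimality. The one subtlety worth stating explicitly is that the argument is uniform in $\bullet\in\{S,Z\}$, the only series-dependent input being the common value $\Gamma_2^S=\Gamma_2^Z=[\Gamma,\Gamma]\Gamma^p$, so that both cases of the proposition follow simultaneously.
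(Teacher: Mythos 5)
Your proposal is correct and follows essentially the same route as the paper: the paper derives the proposition directly from Lemma~\ref{lema_Hopf_iso_modp} (the mod $p$ Hopf formula applied to the presentation $1\to\Gamma_{k+1}^\bullet\to\Gamma\to\mathcal{N}_k^\bullet\to 1$) together with the general fact, recalled in Section~\ref{sec_prelim} via Proposition 9.5.13 of \cite{leed}, that $F/[F,R]R^p$ is the universal $p$-covering group for a minimal presentation of a finite $p$-group. Your explicit verification of the minimality hypothesis $d(\Gamma)=d(\mathcal{N}_k^\bullet)=n$ via the Frattini quotient is a detail the paper leaves implicit, and it is carried out correctly.
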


which is a direct consequence of the following computation

\begin{lema}
\label{lema_Hopf_iso_modp}
For any prime number $p,$
$$H_2(\mathcal{N}^\bullet_{k};\mathbb{Z}/p)\cong \frac{\Gamma^\bullet_{k+1}}{[\Gamma,\Gamma^\bullet_{k+1}](\Gamma^\bullet_{k+1})^p}=\widetilde{\mathcal{L}_{k+1}^\bullet}.$$
\end{lema}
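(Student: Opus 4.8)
The plan is to read off the homology group from a free presentation of $\mathcal{N}^\bullet_k$ via the mod $p$ Hopf formula. Recall from the reference \cite{graham} that for any free presentation $1 \to R \to F \to G \to 1$ one has the general identity $H_2(G;\mathbb{Z}/p)\cong (R\cap [F,F]F^p)/[F,R]R^p$, which specializes to $H_2(G;\mathbb{Z}/p)\cong R/[F,R]R^p$ exactly when $R\leq [F,F]F^p=\mathcal{F}(F)$. The natural choice here is $F=\Gamma$ and $R=\Gamma^\bullet_{k+1}$, so that $G=\Gamma/\Gamma^\bullet_{k+1}=\mathcal{N}^\bullet_k$. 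With these choices the denominator $[F,R]R^p=[\Gamma,\Gamma^\bullet_{k+1}](\Gamma^\bullet_{k+1})^p$ is precisely the subgroup appearing in the definition of $\widetilde{\mathcal{L}^\bullet_{k+1}}$, so the whole statement reduces to showing that the intersection $R\cap [F,F]F^p$ collapses to $R$.

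The key step is therefore to verify the inclusion $\Gamma^\bullet_{k+1}\leq [\Gamma,\Gamma]\Gamma^p$ for every $k\geq 1$. This is immediate from the structure of the two series: for both $\bullet=S$ and $\bullet=Z$ the second layer is $\Gamma^\bullet_2=[\Gamma,\Gamma]\Gamma^p$ (for Stallings directly from the inductive definition $\Gamma^S_2=[\Gamma,\Gamma^S_1](\Gamma^S_1)^p$, and for Zassenhaus because the only contributions to $\prod_{ip^j\geq 2}(\Gamma_i)^{p^j}$ that are not already contained in the others are $(\Gamma_1)^p=\Gamma^p$ and $\Gamma_2=[\Gamma,\Gamma]$). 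Since both series are descending, $\Gamma^\bullet_{k+1}\leq\Gamma^\bullet_2=[\Gamma,\Gamma]\Gamma^p$ for all $k\geq 1$, whence $\Gamma^\bullet_{k+1}\cap [\Gamma,\Gamma]\Gamma^p=\Gamma^\bullet_{k+1}$. Equivalently, since $d(\Gamma)=n=d(\mathcal{N}^\bullet_k)$, the presentation $\Gamma\to\mathcal{N}^\bullet_k$ is minimal and the relation subgroup automatically lies in the Frattini subgroup, which is the hypothesis under which the simplified Hopf formula was derived in the preliminaries.

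Combining these two observations yields
$$H_2(\mathcal{N}^\bullet_k;\mathbb{Z}/p)\cong\frac{\Gamma^\bullet_{k+1}}{[\Gamma,\Gamma^\bullet_{k+1}](\Gamma^\bullet_{k+1})^p}=\widetilde{\mathcal{L}^\bullet_{k+1}},$$
which is the claimed isomorphism. I do not anticipate a genuine obstacle in this argument; the only point requiring care is making sure the mod $p$ Hopf formula is invoked in its general form $H_2\cong (R\cap\mathcal{F}(F))/[F,R]R^p$ and that the hypothesis $R\leq\mathcal{F}(F)$ is explicitly checked, since $F=\Gamma$ is free of finite rank but infinite as a group, so one should not silently borrow the finite $p$-group reduction without confirming that the relevant inclusion still holds.
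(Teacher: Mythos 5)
Your proof is correct and follows essentially the same route as the paper: both apply the mod $p$ Hopf formula to the presentation $1\to\Gamma^\bullet_{k+1}\to\Gamma\to\mathcal{N}^\bullet_k\to 1$ and use the inclusion $\Gamma^\bullet_{k+1}\subset[\Gamma,\Gamma]\Gamma^p$ to collapse the general formula to $R/[F,R]R^p$. The only difference is that you spell out why that inclusion holds (both series descend and $\Gamma^\bullet_2=[\Gamma,\Gamma]\Gamma^p$), a point the paper asserts without further comment.
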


\begin{proof}
Applying the mod $p$ Hopf formula, given in \cite{graham}, to the presentation of $\mathcal{N}_k^\bullet$ given by
$1\rightarrow \Gamma^\bullet_{k+1}\rightarrow \Gamma \rightarrow \mathcal{N}^\bullet_{k}\rightarrow 1,$
since $\Gamma^\bullet_{k+1} \subset [\Gamma,\Gamma]\Gamma^p,$ we get the result.
\end{proof}

\section{Automorphisms of $p$-coverings}
\label{sec_auto}
We now exhibit some results about automorphisms of the universal $p$-covering of an arbitrary $p$-group $G.$ The main property of the universal $p$-covering of $G$ that we will use here, is the fact that, under the canonical action of $G$ on $H^2(G;H_2(G;\mathbb{Z}/p)),$
the cohomology class of the universal $p$-covering of $G$ is stable.

Notice that applying the Universal coefficients theorem to the group $G$ and the $\mathbb{Z}/p$-module $H_2(G;\mathbb{Z}/p)$ with trivial $G$-action, we have an induced natural isomorphism:
\begin{equation}
\label{iso_uct_h2}
\eta: \xymatrix@C=7mm@R=10mm{ H^2(G;H_2(G;\mathbb{Z}/p))
\ar@{->}[r]^-{\sim} & Hom_{\mathbb{Z}/p}(H_2(G;\mathbb{Z}/p),H_2(G;\mathbb{Z}/p)).}
\end{equation}

\begin{prop}
\label{prop_versal_ext_general}
Given a presentation of $G,$ $\xymatrix@C=7mm@R=10mm{
0 \ar@{->}[r] & R \ar@{->}[r] & F \ar@{->}[r] & G \ar@{->}[r] & 1}.$ If we denote $v_p\in H^2(G;H_2(G;\mathbb{Z}/p))$ the cohomology class associated to the universal $p$-covering of $G,$
\begin{equation}
\label{versal_ext_general}
\xymatrix@C=7mm@R=7mm{
0  \ar@{->}[r] & H_2(G;\mathbb{Z}/p) \ar@{->}[r]
 & \frac{F}{[F,R]R^p} \ar@{->}[r] & G \ar@{->}[r] & 1. }
\end{equation}
Then $\eta(v_p)=id.$
\end{prop}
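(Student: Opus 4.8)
The plan is to identify the homomorphism $\eta(v_p)$ with a transgression map and then to compute that transgression by naturality against the free presentation. Recall first the classical fact that, for a central extension $0 \to M \to E \to G \to 1$ of class $c \in H^2(G;M)$ with $M$ a trivial $\mathbb{Z}/p$-module, the image $\eta(c) \in \mathrm{Hom}_{\mathbb{Z}/p}(H_2(G;\mathbb{Z}/p),M)$ under the Universal Coefficients isomorphism coincides (up to the usual sign) with the connecting homomorphism $\tau\colon H_2(G;\mathbb{Z}/p)\to M$ of the mod-$p$ five-term exact sequence of the extension. Applying this to \eqref{versal_ext_general}, whose kernel is $\bar R := R/[F,R]R^p$ identified with $H_2(G;\mathbb{Z}/p)$ through the mod-$p$ Hopf isomorphism $\theta$ of \cite{graham}, it suffices to prove that the transgression $\tau\colon H_2(G;\mathbb{Z}/p)\to \bar R$ of the universal $p$-covering equals $\theta$; then $\eta(v_p)=\theta^{-1}\circ\tau=\mathrm{id}$.

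To compute $\tau$ I would compare it with the transgression $\tau_0$ of the free presentation $1 \to R \to F \to G \to 1$. Over $\mathbb{Z}/p$ its five-term sequence reads
$$H_2(F;\mathbb{Z}/p) \to H_2(G;\mathbb{Z}/p) \xrightarrow{\tau_0} \bar R \to H_1(F;\mathbb{Z}/p) \to H_1(G;\mathbb{Z}/p) \to 0,$$
and since $F$ is free we have $H_2(F;\mathbb{Z}/p)=0$, so $\tau_0$ is injective with image $\ker(\bar R \to F/[F,F]F^p) = (R\cap[F,F]F^p)/[F,R]R^p$. This is exactly the image of the Hopf isomorphism; thus $\tau_0$ realizes $\theta$ (and, the presentation being minimal so that $R \le [F,F]F^p$, it is an isomorphism onto all of $\bar R$, as required for the kernel of \eqref{versal_ext_general} to be $H_2(G;\mathbb{Z}/p)$).

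The quotient $q\colon F \to P = F/[F,R]R^p$ is a morphism of extensions from the free presentation to \eqref{versal_ext_general} lying over $\mathrm{id}_G$, whose restriction to kernels is the projection $R \to \bar R$. On the kernel terms of the two five-term sequences this projection induces the identity of $\bar R$, and on the base it induces the identity of $H_2(G;\mathbb{Z}/p)$. By naturality of the five-term exact sequence (equivalently, of transgression) the resulting square commutes, giving $\tau = \tau_0 = \theta$, and hence $\eta(v_p)=\mathrm{id}$ as desired.

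The step I expect to be most delicate is the first one: pinning down the identification of the Universal Coefficients evaluation map $\eta$ with the homological transgression, with the correct sign and with compatible normalizations of $H_2(G;\mathbb{Z}/p)$ on the two sides of $\eta$. Once both copies of $H_2$ are normalized through the same Hopf isomorphism $\theta$, the naturality argument is purely formal; an alternative, fully explicit route — representing $v_p$ by the $2$-cocycle attached to a set-theoretic section of $F/[F,R]R^p \to G$ and evaluating it on bar $2$-cycles representing $H_2(G;\mathbb{Z}/p)$ — would sidestep the spectral-sequence bookkeeping at the cost of a short cocycle computation, and could serve to double-check the sign.
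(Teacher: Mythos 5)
Your argument is correct and is essentially the paper's own proof in expanded form: the paper likewise identifies the map induced by the extension $0 \to R/[F,R]R^p \to F/[F,R]R^p \to G \to 1$ on $H_2(G;\mathbb{Z}/p)$ with the mod-$p$ Hopf isomorphism and then invokes naturality of $\eta$, delegating the transgression/five-term details you spell out to Brown and Leedham--Green. The extra care you take with the minimality hypothesis $d(F)=d(G)$ and the sign convention is consistent with the paper's setup and harmless for its later use of the result.
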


\begin{proof}
Arguing as in Chapter 5 of \cite{brown} and also as in Chapter 9 of \cite{leed}, if we start with a presentation of $G,$ $\xymatrix@C=5mm@R=5mm{
0 \ar@{->}[r] & R \ar@{->}[r] & F \ar@{->}[r] & G \ar@{->}[r] & 1}$
with $d(F)=d(G),$ the central extension of groups
$\xymatrix@C=5mm@R=5mm{
0 \ar@{->}[r] & \frac{R}{[F,R]R^p} \ar@{->}[r] & \frac{F}{[F,R]R^p} \ar@{->}[r] & G \ar@{->}[r] & 1 }$
induces the mod $p$ Hopf isomorphism $H_2(G;\mathbb{Z}/p)\xrightarrow{\sim} \frac{R}{[F,R]R^p}.$
As a consequence, by the naturality of $\eta,$ we get an extension of groups
$\xymatrix@C=5mm@R=5mm{
0 \ar@{->}[r] & H_2(G;\mathbb{Z}/p) \ar@{->}[r] & \frac{F}{[F,R]R^p} \ar@{->}[r] & G \ar@{->}[r] & 1, }$
witch induces the identity in $H_2(G;\mathbb{Z}/p).$
\end{proof}

\begin{lema}
\label{lema_eq_cocy}
Let $v_p\in H^2(G;H_2(G;\mathbb{Z}/p))$ denote the cohomology class given by the preimage of the identity by the isomorphism $\eta$ of \eqref{iso_uct_h2}. Then for each $\phi\in Aut(G),$ the following equality holds $(H_2(\phi; \mathbb{Z}/p))_*(v_p)=\phi^*(v_p).$

Here $H_2(\phi; \mathbb{Z}/p)$ denotes the element of $Aut(H_2(\phi; \mathbb{Z}/p))$ induced by $\phi.$
\end{lema}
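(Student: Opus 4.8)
The plan is to reduce the claimed equality of cohomology classes to an equality of endomorphisms of $H_2(G;\mathbb{Z}/p)$ by applying the isomorphism $\eta$ of \eqref{iso_uct_h2} to both sides, and then to exploit the naturality of $\eta$ in each of its two variables together with the defining property $\eta(v_p)=\mathrm{id}$. Since $\eta$ is injective, it suffices to show that $\eta$ carries $(H_2(\phi;\mathbb{Z}/p))_*(v_p)$ and $\phi^*(v_p)$ to the same homomorphism $H_2(G;\mathbb{Z}/p)\to H_2(G;\mathbb{Z}/p)$.

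First I would record the two naturality squares for the Universal Coefficients isomorphism over the field $k=\mathbb{Z}/p$. Writing $\eta^V_G\colon H^2(G;V)\xrightarrow{\sim}\mathrm{Hom}_k(H_2(G;k),V)$ for a fixed coefficient $k$-module $V$ with trivial action, naturality in the group gives $\eta^V_G(\phi^*c)=\eta^V_G(c)\circ H_2(\phi;k)$ for any endomorphism $\phi$ of $G$, while naturality in the coefficients gives $\eta^{V'}_G(g_*c)=g\circ\eta^V_G(c)$ for any $k$-linear map $g\colon V\to V'$. Both are the standard functoriality of the Kronecker evaluation pairing underlying the theorem, and I would simply cite it. Specializing to $V=H_2(G;\mathbb{Z}/p)$ and $g=H_2(\phi;\mathbb{Z}/p)$, coefficient-naturality yields $\eta_G\big((H_2(\phi;\mathbb{Z}/p))_*(v_p)\big)=H_2(\phi;\mathbb{Z}/p)\circ\eta_G(v_p)$, and group-naturality yields $\eta_G(\phi^*v_p)=\eta_G(v_p)\circ H_2(\phi;\mathbb{Z}/p)$. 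Using $\eta_G(v_p)=\mathrm{id}$, both right-hand sides collapse to $H_2(\phi;\mathbb{Z}/p)$, so the two classes agree.

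The point worth isolating is that the equality hinges on $v_p$ corresponding to the identity: coefficient-change enters as a post-composition and base-change as a pre-composition, and these two a priori different operations coincide precisely because they are applied to $\mathrm{id}$. I would also state explicitly that $H_2(G;\mathbb{Z}/p)$ is taken with the trivial $G$-action, so that $\phi^*$ on the right-hand side is the ordinary cohomological pullback with fixed coefficients and introduces no module twist; this is what lets the two naturality statements apply cleanly. Beyond keeping the composition orders straight, I do not anticipate a genuine obstacle — the lemma is exactly the compatibility of $\eta$ with the simultaneous change of base and change of coefficients induced by $\phi$.
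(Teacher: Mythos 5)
Your proposal is correct and follows essentially the same route as the paper: apply $\eta$ to both classes, use naturality in the group variable (pre-composition with $H_2(\phi;\mathbb{Z}/p)$) and in the coefficient variable (post-composition), and observe that both collapse to $H_2(\phi;\mathbb{Z}/p)$ because $\eta(v_p)=\mathrm{id}$. Your write-up merely makes explicit the two naturality squares that the paper invokes in a single line.
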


\begin{proof}
Fix an element $\phi\in Aut(G),$ by construction of the isomorphism $\eta$ and its naturality,
we have that
\begin{align*}
\eta(\phi^*(v_p))= & (H_2(\phi; \mathbb{Z}/p))^*(id)= H_2(\phi; \mathbb{Z}/p), \\
\eta((H_2(\phi; \mathbb{Z}/p))_*(v_p))= & (H_2(\phi; \mathbb{Z}/p))_*(id)=H_2(\phi; \mathbb{Z}/p).
\end{align*}
Therefore $\phi^*(v_p)=(H_2(\phi; \mathbb{Z}/p))_*(v_p),$ as desired.
\end{proof}

In particular an automorphism of $G$ lifts to an automorphism of $E:$

\begin{cor}
\label{cor_ver_ext_mod_p}
Let $G$ be a group and
$\xymatrix@C=5mm@R=5mm{
0  \ar@{->}[r] & H_2(G;\mathbb{Z}/p) \ar@{->}[r]
 & E \ar@{->}[r] & G \ar@{->}[r] & 1 }$
 the universal $p$ covering of $G.$
For every element $\phi\in Aut(G)$ there exists an element $\Phi\in Aut(E)$ such that the following diagram commutes
\begin{equation}
\xymatrix@C=10mm@R=10mm{
 0 \ar@{->}[r] & H_2(G;\mathbb{Z}/p) \ar@{->}[r] \ar@{->}[d]^-{H_2(\phi; \mathbb{Z}/p)} & E \ar@{->}[r] \ar@{->}[d]^-{\Phi} & G \ar@{->}[r] \ar@{->}[d]^-{\phi} & 1\\
0  \ar@{->}[r] & H_2(G;\mathbb{Z}/p) \ar@{->}[r]
 & E \ar@{->}[r] & G \ar@{->}[r] & 1. }
\end{equation}
\end{cor}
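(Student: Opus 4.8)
The plan is to read off the statement from the classification of central extensions by second cohomology, the essential ingredient being Lemma~\ref{lema_eq_cocy}. Write $A := H_2(G;\mathbb{Z}/p)$, equipped with the trivial $G$-action, so that the universal $p$-covering $E$ is classified by its class $v_p \in H^2(G;A)$ and, conversely, equivalence classes of central extensions of $G$ by $A$ correspond bijectively to the elements of $H^2(G;A)$. Set $\alpha := H_2(\phi;\mathbb{Z}/p) \in \mathrm{Aut}(A)$. The standard obstruction to lifting the pair $(\phi,\alpha)$ to a homomorphism $\Phi\colon E \to E$ filling the requested square is that the pushout of $E$ along $\alpha$ and the pullback of $E$ along $\phi$ agree as extensions of $G$ by $A$; equivalently, that $\alpha_*(v_p) = \phi^*(v_p)$ in $H^2(G;A)$.

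This equality is exactly the assertion of Lemma~\ref{lema_eq_cocy}, recalling that $v_p = \eta^{-1}(\mathrm{id})$ and $\alpha = H_2(\phi;\mathbb{Z}/p)$. Hence the lifting criterion holds and a homomorphism $\Phi$ making the diagram commute exists. If one prefers an explicit description, choose a normalized $2$-cocycle $f$ representing $v_p$, so that $E \cong A \times G$ with product $(a,g)(b,h) = (a+b+f(g,h),\,gh)$; the coincidence $\alpha_*(v_p) = \phi^*(v_p)$ furnishes a map $c\colon G \to A$ with $\alpha(f(g,h)) - f(\phi(g),\phi(h)) = c(g)+c(h)-c(gh)$, and then $\Phi(a,g) := (\alpha(a)+c(g),\,\phi(g))$ is directly verified to be a homomorphism that restricts to $\alpha$ on $A$ and covers $\phi$ on $G$.

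The one point deserving attention is that $\Phi$ is an \emph{automorphism} of $E$ and not merely an endomorphism. Since $\phi$ is an automorphism of $G$ and $\alpha = H_2(\phi;\mathbb{Z}/p)$ is the induced automorphism of $A$, both outer vertical arrows of the commutative square are isomorphisms, and the short five lemma for groups (whose proof is immediate by diagram chasing on the exact rows) then forces $\Phi$ to be an isomorphism as well. I expect no serious obstacle here: once Lemma~\ref{lema_eq_cocy} supplies the matching of the pushout and pullback classes, the construction of $\Phi$ and the verification that it is bijective are both routine.
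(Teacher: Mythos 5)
Your proposal is correct and follows essentially the same route as the paper: invoke Lemma \ref{lema_eq_cocy} to identify $(H_2(\phi;\mathbb{Z}/p))_*(v_p)$ with $\phi^*(v_p)$, translate this equality of cohomology classes into the existence of a homomorphism $\Phi$ filling the square, and conclude that $\Phi$ is an automorphism by the five lemma. The explicit cocycle-level formula for $\Phi$ is an optional elaboration the paper omits, but it does not change the argument.
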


\begin{proof}
Consider the universal $p$-covering of $G,$
$$\xymatrix@C=10mm@R=10mm{
 0 \ar@{->}[r] & H_2(G;\mathbb{Z}/p) \ar@{->}[r] & E \ar@{->}[r] & G \ar@{->}[r] & 1. }$$
with associated cohomology class $v_p.$
By Lemma \eqref{lema_eq_cocy}, we have that $(H_2(\phi; \mathbb{Z}/p))_*(v_p)=\phi^*(v_p),$ for every $\phi\in Aut(G).$
Equivalently, in terms of extensions, we have that there exists an element $\Phi\in Hom(E,E)$ making the following diagram commutative
\begin{equation*}
\xymatrix@C=10mm@R=10mm{
 0 \ar@{->}[r] & H_2(G;\mathbb{Z}/p) \ar@{->}[r] \ar@{->}[d]^-{H_2(\phi; \mathbb{Z}/p)} & E \ar@{->}[r] \ar@{->}[d]^-{\Phi} & G \ar@{->}[r] \ar@{->}[d]^-{\phi} & 1\\
0  \ar@{->}[r] & H_2(G;\mathbb{Z}/p) \ar@{->}[r]
 & E \ar@{->}[r] & G \ar@{->}[r] & 1. }
\end{equation*}
Finally, the $5$-lemma implies that $\Phi$ is an automorphism of $E$ that lifts $\phi$ as desired.
\end{proof}

\subsection{Construction of the extensions}
Next we apply this theory to the $p$-coverings of our interest, which are those given in diagram \eqref{p-central_ext_push-out},
in order to get a functorial construction of an extension of groups
\begin{equation}
\label{ses_aut_p}
\xymatrix@C=7mm@R=10mm{0 \ar@{->}[r] & Hom(\mathcal{N}^\bullet_1, \mathcal{L}^\bullet_{k+1}) \ar@{->}[r]^-{i} & Aut\;\mathcal{N}^\bullet_{k+1} \ar@{->}[r]^-{\psi_k^\bullet} & Aut \;\mathcal{N}^\bullet_k \ar@{->}[r] & 1 .}
\end{equation}
Let $\Gamma$ be a free group of finite rank, denote by $\{ \Gamma_k^\bullet\}_k$ the Stallings or Zassenhaus $p$-central series.
Given an arbitrary group $G,$ using the properties of Stallings and Zassenhaus $p$-central series one gets that the groups $G^\bullet_k$ are characteristic subgroups of $G.$ In particular, by Proposition \ref{prop_ident_L_versal}, the $p$-elementary abelian groups
$\mathcal{L}_{k}^\bullet,$ $\widetilde{\mathcal{L}_{k}^Z},$ $\frac{\Gamma^Z_{k+1}}{[\Gamma, \Gamma^Z_{k}](\Gamma^Z_{k})^p}$ are respectively characteristic subgroups of $\mathcal{N}_{k}^\bullet,$ $\widetilde{\mathcal{N}_{k}^Z},$ $\widetilde{\mathcal{N}_{k}^Z}.$

Then every automorphism $\widetilde{\Phi}$ of $\widetilde{\mathcal{N}_{k+1}^\bullet}$ induces a commutative diagram
\[
\begin{tikzcd}[column sep={1.2cm,between origins},row sep=0.5cm]
0 \arrow[rr] &&\widetilde{\mathcal{L}_{k+1}^\bullet} \arrow[rr]\arrow{rd}{\widetilde{\psi}} \arrow[dd] && \widetilde{\mathcal{N}_{k+1}^\bullet} \arrow[rr] \arrow{rd}{\widetilde{\Phi}} \arrow[dd] && \mathcal{N}^\bullet_k \arrow[rr] \arrow{rd}{\phi} \arrow[dd, equal] && 1 &  \\
& 0 \arrow[rr, crossing over] && \widetilde{\mathcal{L}_{k+1}^\bullet} \arrow[rr, crossing over]  && \widetilde{\mathcal{N}_{k+1}^\bullet} \arrow[rr, crossing over] && \mathcal{N}^\bullet_k \arrow[rr] && 1  \\
0 \arrow[rr] && \mathcal{L}_{k+1}^\bullet \arrow[rr] \arrow{rd}{\psi} &&\mathcal{N}_{k+1}^\bullet \arrow[rr] \arrow{rd}{\Phi} && \mathcal{N}^\bullet_k \arrow[rr] \arrow{rd}{\phi} && 1 & \\
& 0 \arrow[rr] &&\mathcal{L}_{k+1}^\bullet \arrow[rr] \arrow[uu, crossing over, leftarrow] && \mathcal{N}_{k+1}^\bullet \arrow[rr] \arrow[uu, crossing over, leftarrow] && \mathcal{N}^\bullet_k \arrow[rr] \arrow[uu, crossing over, equal] && 1,
\end{tikzcd}
\]
which induces a push-out diagram
\begin{equation}
\label{ses_aut_p_tilde}
\xymatrix@C=7mm@R=10mm{0 \ar@{->}[r] & Hom(\mathcal{N}^\bullet_1, \widetilde{\mathcal{L}^\bullet_{k+1}}) \ar@{->}[d] \ar@{->}[r]^-{i} & Aut\;\widetilde{\mathcal{N}^\bullet_{k+1}} \ar@{->}[d] \ar@{->}[r]^-{\widetilde{\psi^\bullet_k}} & Aut \;\mathcal{N}^\bullet_k \ar@{=}[d] \ar@{->}[r] & 1 \\
0 \ar@{->}[r] & Hom(\mathcal{N}^\bullet_1,\mathcal{L}^\bullet_{k+1}) \ar@{->}[r]^-{i} & Aut\;\mathcal{N}^\bullet_{k+1} \ar@{->}[r]^-{\psi^\bullet_k} & Aut \;\mathcal{N}^\bullet_k \ar@{->}[r] & 1. }
\end{equation}

\begin{rem}
Notice that the top and bottom short exact sequence of diagram
\eqref{ses_aut_p_tilde} coincide for $\bullet=S$ and differ for $\bullet=Z.$
\end{rem}

Next we construct the top extension of diagram \eqref{ses_aut_p_tilde}. We prove that $\widetilde{\psi^\bullet_k}$ is an epimorphism and
that $Ker(\widetilde{\psi^\bullet_k})$ is isomorphic to $Hom(\mathcal{N}^\bullet_1,\widetilde{\mathcal{L}_{k+1}^\bullet}).$

\vspace{0.5cm}
\textbf{The map $\widetilde{\psi^\bullet_k}$ is an epimorphism.}
Consider the central extension

\begin{equation*}
\xymatrix@C=7mm@R=10mm{1 \ar@{->}[r] & \widetilde{\mathcal{L}_{k+1}^\bullet} \ar@{->}[r] & \widetilde{\mathcal{N}^\bullet_{k+1}} \ar@{->}[r] & \mathcal{N}^\bullet_{k} \ar@{->}[r] & 1. }
\end{equation*}
By mod p Hopf isomorphism this extension becomes
\begin{equation*}
\xymatrix@C=7mm@R=10mm{1 \ar@{->}[r] & H_2(\mathcal{N}^\bullet_{k};\mathbb{Z}/p) \ar@{->}[r] & \widetilde{\mathcal{N}^\bullet_{k+1}} \ar@{->}[r] & \mathcal{N}^\bullet_{k} \ar@{->}[r] & 1. }
\end{equation*}
By Propositon \ref{prop_univ_p-cov_stall_zass} the above extension is the universal $p$-covering of $\mathcal{N}^\bullet_k.$
As a consequence, by Corollary \ref{cor_ver_ext_mod_p}, we get that 
$\widetilde{\psi^\bullet_k}: Aut (\widetilde{\mathcal{N}_{k+1}^\bullet}) \rightarrow Aut(\mathcal{N}^\bullet_k)$ is an epimorphism.

\vspace{0.5cm}
\textbf{The Kernel of $\widetilde{\psi^\bullet_k}$ is isomorphic to
$Hom(\mathcal{N}_{1}^\bullet,\widetilde{\mathcal{L}_{k+1}^\bullet}).$}

\begin{defi}[Section 9.1.3. in \cite{rot}]
An automorphism $\varphi$ of a group $E$ \textit{stabilizes} an extension
\begin{equation}
\xymatrix@C=7mm@R=10mm{0 \ar@{->}[r] & A  \ar@{->}[r] &  E \ar@{->}[r] & G \ar@{->}[r]& 1  }
\end{equation}
if the following diagram commutes:
\begin{equation}
\xymatrix@C=7mm@R=10mm{0 \ar@{->}[r] & A \ar@{->}[r]^-{i} \ar@{=}[d]&  E \ar@{->}[r]^-{p} \ar@{->}[d]^-{\varphi} & G \ar@{=}[d] \ar@{->}[r]& 1 \\
0 \ar@{->}[r] & A \ar@{->}[r]^-{i} & E \ar@{->}[r]^-{p} & G \ar@{->}[r] & 1. }
\end{equation}
The set of all stabilizing automorphisms of an extension of $A$ by $G,$ where $A$ is a $G$-module, is a group under composition and it is denoted by $Stab(G,A).$
In addition, by Corollary 9.16 in \cite{rot}, $Stab(G,A)$ is isomorphic to the group of derivations $Der(G,A)$ via the homomorphism
\begin{align*}
\sigma: Stab(G,A) & \rightarrow Der(G,A) \\
\varphi & \mapsto (d:\;G \rightarrow A),
\end{align*}
where $d(x)=\varphi(s(x))-s(x)$ with $s$ a section.
\end{defi}

Consider the universal $p$-covering of $\mathcal{N}^\bullet_k,$
\begin{equation}
\label{ext_tilde_stab}
\xymatrix@C=7mm@R=10mm{
0 \ar@{->}[r] &H_2(\mathcal{N}_{k}^\bullet;\mathbb{Z}/p) \ar@{->}[r] &\widetilde{\mathcal{N}_{k+1}^\bullet} \ar@{->}[r] & \mathcal{N}^\bullet_k \ar@{->}[r] & 1 .}
\end{equation}
Denote $v_p\in H^2(\mathcal{N}_{k}^\bullet;H_2(\mathcal{N}_{k}^\bullet;\mathbb{Z}/p))$ its associated cohomology class.
Let $\Phi\in Ker(\widetilde{\psi_k^\bullet}).$ Then $\Phi$ induces a commutative diagram
\begin{equation}
\label{com_diag_push-out_tilde}
\xymatrix@C=7mm@R=10mm{0 \ar@{->}[r] &H_2(\mathcal{N}_{k}^\bullet;\mathbb{Z}/p) \ar@{->}[r] \ar@{->}[d]^{\psi}&  \widetilde{\mathcal{N}_{k+1}^\bullet} \ar@{->}[r] \ar@{->}[d]^{\Phi}& \mathcal{N}^\bullet_k \ar@{=}[d] \ar@{->}[r]& 1 \\
0 \ar@{->}[r] & H_2(\mathcal{N}_{k}^\bullet;\mathbb{Z}/p) \ar@{->}[r] &\widetilde{\mathcal{N}_{k+1}^\bullet} \ar@{->}[r] & \mathcal{N}^\bullet_k \ar@{->}[r] & 1. }
\end{equation}
This implies that
$\psi_*(v_p)=v_p.$
Applying the natural isomorphism $\eta$ of \eqref{iso_uct_h2} to the above equality, we obtain that
$id=\eta(v_p)=\eta \psi_*(v_p)=\psi_*\eta(v_p)=\psi_*(id)=\psi.$
Therefore $\Phi$ stabilizes the extension \eqref{ext_tilde_stab}.
Hence,
\begin{align*}
Ker(\widetilde{\psi^\bullet_k})\cong & Stab(\mathcal{N}^\bullet_{k},H_2(\mathcal{N}^\bullet_k;\mathbb{Z}/p))\cong  Der(\mathcal{N}^\bullet_{k},H_2(\mathcal{N}^\bullet_k;\mathbb{Z}/p))= \\ =& Hom(\mathcal{N}^\bullet_{k},H_2(\mathcal{N}^\bullet_k;\mathbb{Z}/p))= Hom(\mathcal{N}^\bullet_1,H_2(\mathcal{N}^\bullet_k;\mathbb{Z}/p))\cong \\
\cong & Hom(\mathcal{N}^\bullet_1,\widetilde{\mathcal{L}^\bullet_{k+1}}),
\end{align*}
where the last isomorphism follows from the mod p Hopf formula.
Therefore we get the following result:

\begin{prop}
\label{prop_exact_seq_modp}
We have an exact sequence of groups
\begin{equation*}
\xymatrix@C=7mm@R=10mm{0 \ar@{->}[r] & Hom(\mathcal{N}^\bullet_1, \widetilde{\mathcal{L}^\bullet_{k+1}}) \ar@{->}[r]^-{i} & Aut\;\widetilde{\mathcal{N}^\bullet_{k+1}} \ar@{->}[r]^-{\widetilde{\psi^\bullet_k}} & Aut \;\mathcal{N}^\bullet_k \ar@{->}[r] & 1, }
\end{equation*}
where $i$ is defined as $i(f)= (\gamma\mapsto f([\gamma])\gamma).$
\end{prop}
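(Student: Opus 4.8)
The plan is to assemble exactness of the sequence from the machinery already in place and then to verify that the abstract identification of the kernel is realized by the concrete formula $i(f)=(\gamma\mapsto f([\gamma])\gamma)$. Exactness on the right is immediate: by Proposition~\ref{prop_univ_p-cov_stall_zass} the extension \eqref{ext_tilde_stab} is the universal $p$-covering of $\mathcal{N}^\bullet_k$, so Corollary~\ref{cor_ver_ext_mod_p} lifts every $\phi\in Aut(\mathcal{N}^\bullet_k)$ to some $\Phi\in Aut(\widetilde{\mathcal{N}^\bullet_{k+1}})$ with $\widetilde{\psi^\bullet_k}(\Phi)=\phi$. For the kernel I would argue that $\Phi\in Ker(\widetilde{\psi^\bullet_k})$ if and only if $\Phi$ stabilizes \eqref{ext_tilde_stab}: the nontrivial direction is exactly the computation preceding the statement, where diagram \eqref{com_diag_push-out_tilde} forces $\psi_*(v_p)=v_p$ and then $\eta(v_p)=id$ (Proposition~\ref{prop_versal_ext_general}) together with naturality of $\eta$ gives $\psi=id$, so $\Phi$ fixes both sub and quotient. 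Via $\sigma$ this identifies $Ker(\widetilde{\psi^\bullet_k})$ with $Der(\mathcal{N}^\bullet_k,H_2(\mathcal{N}^\bullet_k;\mathbb{Z}/p))$, which equals $Hom(\mathcal{N}^\bullet_k,H_2(\mathcal{N}^\bullet_k;\mathbb{Z}/p))$ since the action is trivial, and this in turn equals $Hom(\mathcal{N}^\bullet_1,\widetilde{\mathcal{L}^\bullet_{k+1}})$ because $H_2$ is $p$-elementary abelian (so every homomorphism factors through $\mathcal{N}^\bullet_1=\mathcal{N}^\bullet_k/\mathcal{F}$) and $\widetilde{\mathcal{L}^\bullet_{k+1}}\cong H_2(\mathcal{N}^\bullet_k;\mathbb{Z}/p)$ by Lemma~\ref{lema_Hopf_iso_modp}.

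Next I would pin down the map $i$ and check it has the required formal properties, using two facts throughout: that $f([\gamma])\in\widetilde{\mathcal{L}^\bullet_{k+1}}$ is central in $\widetilde{\mathcal{N}^\bullet_{k+1}}$ (it lies in the kernel of the central extension), and that $\widetilde{\mathcal{L}^\bullet_{k+1}}$ maps to $0$ in $\mathcal{N}^\bullet_1$ (since $\Gamma^\bullet_{k+1}\subseteq\Gamma^\bullet_2$ for $k\geq 1$), so that the projection $\widetilde{\mathcal{N}^\bullet_{k+1}}\to\mathcal{N}^\bullet_1$, $\gamma\mapsto[\gamma]$, is well defined and kills the image of $f$. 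First, $i(f)$ is an endomorphism: $i(f)(\gamma_1\gamma_2)=f([\gamma_1])f([\gamma_2])\gamma_1\gamma_2$ while $i(f)(\gamma_1)\,i(f)(\gamma_2)=f([\gamma_1])\gamma_1 f([\gamma_2])\gamma_2$, and these agree because $f([\gamma_2])$ is central. Second, $i$ is a group homomorphism into $Aut(\widetilde{\mathcal{N}^\bullet_{k+1}})$: for the composite one computes $\big(i(f)\circ i(g)\big)(\gamma)=f\big([g([\gamma])\gamma]\big)g([\gamma])\gamma=f([\gamma])g([\gamma])\gamma=i(fg)(\gamma)$, where $[g([\gamma])\gamma]=[\gamma]$ precisely because $g([\gamma])$ dies in $\mathcal{N}^\bullet_1$; this also shows $i(f)$ is invertible with inverse $i(f^{-1})$. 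Injectivity of $i$ is then clear, since $i(f)=id$ forces $f([\gamma])=1$ for all $\gamma$, hence $f=0$. Finally each $i(f)$ is the identity modulo the center, so $\widetilde{\psi^\bullet_k}\circ i$ is trivial and $Im(i)\subseteq Ker(\widetilde{\psi^\bullet_k})$.

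The main obstacle I anticipate is the last equality $Im(i)=Ker(\widetilde{\psi^\bullet_k})$, i.e.\ confirming that the concrete $i$ is genuinely the inverse of the abstract isomorphism $\sigma\colon Stab\xrightarrow{\sim} Der$ rather than merely some injection into the kernel. The cleanest route is to compute $\sigma(i(f))$ directly: choosing a set-theoretic section $s$ of $\widetilde{\mathcal{N}^\bullet_{k+1}}\to\mathcal{N}^\bullet_k$ and recalling $\sigma(\varphi)(x)=\varphi(s(x))\,s(x)^{-1}$, one gets $\sigma(i(f))(x)=f([s(x)])$, which is the derivation (here homomorphism) attached to $f$ under the identification $Der(\mathcal{N}^\bullet_k,H_2)=Hom(\mathcal{N}^\bullet_1,\widetilde{\mathcal{L}^\bullet_{k+1}})$. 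This exhibits $i$ as $\sigma^{-1}$ followed by the inclusion $Stab\hookrightarrow Aut$, so $Im(i)=Stab(\mathcal{N}^\bullet_k,H_2(\mathcal{N}^\bullet_k;\mathbb{Z}/p))=Ker(\widetilde{\psi^\bullet_k})$, completing exactness in the middle; the verification again rests only on the centrality of $f([\gamma])$ and on $\widetilde{\mathcal{L}^\bullet_{k+1}}$ being annihilated in $\mathcal{N}^\bullet_1$.
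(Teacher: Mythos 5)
Your proposal is correct and follows essentially the same route as the paper: surjectivity from the universal $p$-covering property via Corollary~\ref{cor_ver_ext_mod_p}, and identification of the kernel by showing elements of $Ker(\widetilde{\psi^\bullet_k})$ stabilize the extension (using $\eta(v_p)=id$ and naturality) and then passing through $Stab\cong Der=Hom(\mathcal{N}^\bullet_1,\widetilde{\mathcal{L}^\bullet_{k+1}})$. Your explicit verification that the formula $i(f)=(\gamma\mapsto f([\gamma])\gamma)$ realizes $\sigma^{-1}$ is a welcome detail the paper leaves implicit, but it does not change the argument.
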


Now taking a push out diagram
\begin{equation*}
\xymatrix@C=7mm@R=10mm{0 \ar@{->}[r] & Hom(\mathcal{N}^\bullet_1, \widetilde{\mathcal{L}^\bullet_{k+1}}) \ar@{->}[d]^-{q} \ar@{->}[r]^-{i} & Aut\;\widetilde{\mathcal{N}^\bullet_{k+1}} \ar@{->}[d] \ar@{->}[r]^-{\widetilde{\psi^\bullet_k}} & Aut \;\mathcal{N}^\bullet_k \ar@{=}[d] \ar@{->}[r] & 1 \\
0 \ar@{->}[r] & Hom(\mathcal{N}^\bullet_1,\mathcal{L}^\bullet_{k+1}) \ar@{->}[r]^-{i} & Aut\;\mathcal{N}^\bullet_{k+1} \ar@{->}[r]^-{\psi^\bullet_k} & Aut \;\mathcal{N}^\bullet_k \ar@{->}[r] & 1, }
\end{equation*}
where $q$ is the quotient respect $\frac{\Gamma^\bullet_{k+2}}{[\Gamma,\Gamma^\bullet_{k+1}](\Gamma^\bullet_{k+1})^p},$ we have the expected result:
\begin{cor}
\label{cor_exact_seq_modp_L}
We have an exact sequence of groups
\begin{equation*}
\xymatrix@C=7mm@R=10mm{0 \ar@{->}[r] & Hom(\mathcal{N}^\bullet_1,\mathcal{L}^\bullet_{k+1}) \ar@{->}[r]^-{i} & Aut\;\mathcal{N}^\bullet_{k+1} \ar@{->}[r]^-{\psi^\bullet_k} & Aut \;\mathcal{N}^\bullet_k \ar@{->}[r] & 1, }
\end{equation*}
where $i$ is defined as $i(f)= (\gamma\mapsto f([\gamma])\gamma).$
\end{cor}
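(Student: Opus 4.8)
The plan is to obtain the bottom row directly from the already-established top exact sequence of Proposition \ref{prop_exact_seq_modp} by pushing it out along the surjection $q$. Write $K=\frac{\Gamma^\bullet_{k+2}}{[\Gamma,\Gamma^\bullet_{k+1}](\Gamma^\bullet_{k+1})^p}$ for the kernel of the left vertical map $\widetilde{\mathcal{L}^\bullet_{k+1}}\rightarrow\mathcal{L}^\bullet_{k+1}$ of diagram \eqref{p-central_ext_push-out}; then $q$ is post-composition with this projection and the middle vertical quotient $\pi\colon\widetilde{\mathcal{N}^\bullet_{k+1}}\rightarrow\mathcal{N}^\bullet_{k+1}$ has kernel $K$. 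Since every factor in the definition of $K$ is built functorially from $\Gamma$, the subgroup $K$ is characteristic in $\widetilde{\mathcal{N}^\bullet_{k+1}}$, so every $\widetilde{\Phi}\in Aut\,\widetilde{\mathcal{N}^\bullet_{k+1}}$ descends through $\pi$ to a well-defined $\rho(\widetilde{\Phi})\in Aut\,\mathcal{N}^\bullet_{k+1}$. This furnishes the middle vertical homomorphism $\rho$ of the push-out square, and the commutativity of the three-dimensional diagram preceding \eqref{ses_aut_p_tilde} gives $\psi^\bullet_k\circ\rho=\widetilde{\psi^\bullet_k}$, where $\psi^\bullet_k$ is the descent to $Aut\,\mathcal{N}^\bullet_k$ (itself well defined because $\mathcal{L}^\bullet_{k+1}$ is characteristic in $\mathcal{N}^\bullet_{k+1}$).

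Next I would identify $\ker\rho$ with $i(\ker q)$. If $\rho(\widetilde{\Phi})=\mathrm{id}$ then $\widetilde{\Phi}$ induces the identity on $\mathcal{N}^\bullet_k$, hence $\widetilde{\Phi}\in Ker(\widetilde{\psi^\bullet_k})=i\big(Hom(\mathcal{N}^\bullet_1,\widetilde{\mathcal{L}^\bullet_{k+1}})\big)$ by Proposition \ref{prop_exact_seq_modp}. Writing $\widetilde{\Phi}=i(f)$, i.e.\ $\widetilde{\Phi}(\gamma)=f([\gamma])\gamma$, the condition $\pi\widetilde{\Phi}=\pi$ reads $f([\gamma])\in K$ for every $\gamma$, that is $f\in\ker q$; the reverse inclusion $i(\ker q)\subseteq\ker\rho$ is immediate from the same formula. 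Thus $\ker\rho=i(\ker q)$, and under $q$ the quotient $Hom(\mathcal{N}^\bullet_1,\widetilde{\mathcal{L}^\bullet_{k+1}})/\ker q$ is $Hom(\mathcal{N}^\bullet_1,\mathcal{L}^\bullet_{k+1})$, with injection $i(f)(\gamma)=f([\gamma])\gamma$ inherited from the top row.

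The main obstacle is the surjectivity of $\rho$. Given $\Phi\in Aut\,\mathcal{N}^\bullet_{k+1}$, set $\phi=\psi^\bullet_k(\Phi)$; since the top row is the universal $p$-covering of $\mathcal{N}^\bullet_k$ (Proposition \ref{prop_univ_p-cov_stall_zass}), Corollary \ref{cor_ver_ext_mod_p} lifts $\phi$ to some $\widetilde{\Phi}$ with $\psi^\bullet_k(\rho(\widetilde{\Phi}))=\widetilde{\psi^\bullet_k}(\widetilde{\Phi})=\phi=\psi^\bullet_k(\Phi)$. Hence $\Phi$ and $\rho(\widetilde{\Phi})$ induce the same map on $\mathcal{N}^\bullet_k$, so their difference lies in $Ker(\psi^\bullet_k)$. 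Here I would insert the one genuinely computational point: an automorphism of $\mathcal{N}^\bullet_{k+1}$ inducing the identity on $\mathcal{N}^\bullet_k$ necessarily stabilizes the central extension $0\rightarrow\mathcal{L}^\bullet_{k+1}\rightarrow\mathcal{N}^\bullet_{k+1}\rightarrow\mathcal{N}^\bullet_k\rightarrow1$. Indeed, by Proposition \ref{prop_ident_L_versal} the group $\mathcal{L}^\bullet_{k+1}$ is spanned by weight-$(k+1)$ complex commutators (and $p$-th powers) in the generators, and altering each generator by an element of $\Gamma^\bullet_{k+1}$ changes such a commutator only by a term of weight $\geq k+2$, i.e.\ modulo $\Gamma^\bullet_{k+2}$ (using the Hall identities of Section \ref{sec_comm} and Theorem \ref{teo_hall} for the power part); so the induced action on $\mathcal{L}^\bullet_{k+1}$ is trivial. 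The $Stab\cong Der\cong Hom$ isomorphism for the trivial action then gives $Ker(\psi^\bullet_k)\cong Hom(\mathcal{N}^\bullet_1,\mathcal{L}^\bullet_{k+1})$, so the difference equals some $i(g)$; lifting $g$ to $\widetilde{g}$ through the surjection $q$ and replacing $\widetilde{\Phi}$ by $i(\widetilde{g})\widetilde{\Phi}$ yields $\rho(\widetilde{\Phi})=\Phi$.

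Finally I would assemble the conclusion: $\rho$ induces an isomorphism $Aut\,\widetilde{\mathcal{N}^\bullet_{k+1}}/i(\ker q)\xrightarrow{\ \sim\ }Aut\,\mathcal{N}^\bullet_{k+1}$, and since $q$ is onto with $\ker q$ matching $\ker\rho$ under $i$, the standard push-out formalism for extensions with abelian kernel gives exactness of the bottom row together with the stated injection $i$. I expect that the push-out bookkeeping and the kernel computation are routine once $\rho$ is in place, and that the only delicate step is the stabilizer claim, whose verification rests on the weight estimate from commutator calculus described above.
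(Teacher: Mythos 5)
Your proposal is correct and follows the paper's own route: Corollary \ref{cor_exact_seq_modp_L} is obtained there precisely as the push-out of the sequence of Proposition \ref{prop_exact_seq_modp} along the quotient $q$, the paper's proof being essentially the one-line assertion of that push-out diagram. The extra verifications you supply (well-definedness and surjectivity of the descent map $\rho$ on automorphism groups, $\ker\rho=i(\ker q)$, and the identification of $Ker(\psi^\bullet_k)$ via stabilizing automorphisms) are exactly the bookkeeping the paper leaves implicit, and they are sound.
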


\noindent \textbf{Comparing $Aut(\mathcal{N}^Z_k)$ and $Aut(\mathcal{N}^S_k)$.}
Notice that, in general, the extensions of Corollary \eqref{cor_exact_seq_modp_L} with $\bullet=Z$ and $\bullet=S$ are distinct. However, by Proposition 2.6 in \cite{coop}, for every positive integer $l,$ we have that
$\Gamma^Z_{p^l}<\Gamma^S_l<\Gamma^Z_l.$ Moreover one can check that the groups $\frac{\Gamma^Z_l}{\Gamma^Z_{p^l}},$ $\frac{\Gamma^S_l}{\Gamma^Z_{p^l}},$ $\frac{\Gamma^Z_l}{\Gamma^S_l}$ are respectively characteristic subgroups of $\mathcal{N}^Z_{p^l-1},$ $\mathcal{N}^Z_{p^l-1},$ $\mathcal{N}^S_{l-1}.$
Therefore we have a commutative diagram
\begin{equation*}
\xymatrix@C=7mm@R=10mm{Aut (\mathcal{N}_{p^l-1}^Z) \ar@{->}[r] \ar@{->}[d]& Aut(\mathcal{N}^Z_{l-1}) \ar@{=}[d]  \\
Aut(\mathcal{N}_{l-1}^S) \ar@{->}[r] & Aut(\mathcal{N}^Z_{l-1}) ,}
\end{equation*}
which allows us to compare the aforementioned extensions.

\subsection{Centrality}
Next we study the centrality of the extensions constructed in the previous Section.
Given an arbitrary group $G,$  we denote by $IA_k^\bullet(G)$ the elements of $Aut(G)$ that act trivially on $G/G^\bullet_{k+1},$ that is,
$IA_k^\bullet(G)=\{f\in Aut(G)\;\mid \; f(x)x^{-1}\in G^\bullet_{k+1} \text{ for all } x\in G\}.$
Throughout this Section we denote $IA_1^\bullet(G)$ by $IA^p(G),$ with $\bullet= S \text{ or }Z.$

In the sequel we show that the extension of groups
\begin{equation}
\label{ses_aut_ZS}
\xymatrix@C=7mm@R=10mm{0 \ar@{->}[r] & Hom(\mathcal{N}^\bullet_1,\mathcal{L}^\bullet_{k+1}) \ar@{->}[r] & Aut\;\mathcal{N}^\bullet_{k+1} \ar@{->}[r] & Aut \;\mathcal{N}^\bullet_k \ar@{->}[r] & 1}
\end{equation}
is a non-central extension, but that if we restrict this extension to $IA^p(\mathcal{N}^\bullet_k)$ we get another extension of groups
\begin{equation}
\label{ses_ZS}
\xymatrix@C=7mm@R=10mm{0 \ar@{->}[r] & Hom(\mathcal{N}_1^\bullet, \mathcal{L}^\bullet_{k+1}) \ar@{->}[r] & IA^p(\mathcal{N}^\bullet_{k+1}) \ar@{->}[r] & IA^p(\mathcal{N}^\bullet_k) \ar@{->}[r] & 1,}
\end{equation}
which is a central extension.
The main argument to get these results is based on the fact that, as we will see, the action of $Aut(\mathcal{N}^\bullet_{k+1})$ on $Hom(\mathcal{N}_1^\bullet, \mathcal{L}^\bullet_{k+1})$ factors through $Aut(\mathcal{N}^\bullet_{1}).$
We first give some preliminary results.

\begin{lema}[Three Subgroups Lemma,\cite{hall}]
Let $A,$ $B$ and $C$ be subgroups of a group $G.$ If $N\lhd G$ is a normal subgroup such that $[A,[B,C]]$ and $[B,[C,A]]$ are contained in $N$ then $[C,[A,B]]$ is also contained in $N.$
\end{lema}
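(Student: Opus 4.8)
The plan is to reduce the statement to a computation in the quotient $\bar{G}=G/N$ and then feed in the Hall--Witt identity (identity (5) of Section~\ref{sec_comm}). Writing bars for images in $\bar{G}$, the two hypotheses say exactly that $\bar{A}$ centralises the subgroup $[\bar{B},\bar{C}]$ and that $\bar{B}$ centralises $[\bar{C},\bar{A}]$, while the conclusion to be established is that $\bar{C}$ centralises $[\bar{A},\bar{B}]$. Since a centraliser in $\bar{G}$ is a subgroup, and all the commutator subgroups in sight are generated by commutators of generators, it suffices to show that each generator $\bar{c}$ of $\bar{C}$ commutes with a generating set of $[\bar{A},\bar{B}]$; thus I only need to control the three-fold commutators on arbitrary generators $a\in A$, $b\in B$, $c\in C$.

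First I would write the Hall--Witt identity for $\bar{a},\bar{b},\bar{c}$, namely $[\bar{a}^{\bar{b}},[\bar{b},\bar{c}]]\,[\bar{b}^{\bar{c}},[\bar{c},\bar{a}]]\,[\bar{c}^{\bar{a}},[\bar{b},\bar{a}]]=1$, and argue that its first two factors are trivial in $\bar{G}$. The step I expect to be the main obstacle is that the first factor involves the conjugate $\bar{a}^{\bar{b}}$, which need not lie in $\bar{A}$, so the centralising hypothesis does not apply verbatim. The resolution is the elementary identity $[x^y,z]=[x,z^{y^{-1}}]^y$ (readily checked from the definitions), which rewrites the factor as $[\bar{a},[\bar{b},\bar{c}]^{\bar{b}^{-1}}]^{\bar{b}}$; then identity (3) of Section~\ref{sec_comm} gives $[\bar{b},\bar{c}]^{\bar{b}^{-1}}=[\bar{c},\bar{b}^{-1}]$, which still lies in $[\bar{C},\bar{B}]=[\bar{B},\bar{C}]$. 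As $\bar{A}$ centralises all of $[\bar{B},\bar{C}]$ the inner commutator vanishes, hence so does the first factor. The second factor $[\bar{b}^{\bar{c}},[\bar{c},\bar{a}]]$ is killed by the same device, now rewriting $[\bar{c},\bar{a}]^{\bar{c}^{-1}}=[\bar{a},\bar{c}^{-1}]\in[\bar{A},\bar{C}]=[\bar{C},\bar{A}]$ and invoking that $\bar{B}$ centralises $[\bar{C},\bar{A}]$.

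With the first two factors trivial, the Hall--Witt identity collapses to $[\bar{c}^{\bar{a}},[\bar{b},\bar{a}]]=1$. Stripping the outer conjugation by the same identity $[x^y,z]=[x,z^{y^{-1}}]^y$ gives $[\bar{c},[\bar{b},\bar{a}]^{\bar{a}^{-1}}]=1$, and identity (3) turns $[\bar{b},\bar{a}]^{\bar{a}^{-1}}$ into $[\bar{a}^{-1},\bar{b}]$, so $[\bar{c},[\bar{a}^{-1},\bar{b}]]=1$ for all generators. As $a,b$ range over generators of $A,B$ the elements $[\bar{a}^{-1},\bar{b}]$ generate $[\bar{A},\bar{B}]$, so each $\bar{c}$ lies in the centraliser of $[\bar{A},\bar{B}]$; this centraliser being a subgroup containing every generator of $\bar{C}$, it contains $\bar{C}$. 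Hence $\bar{C}$ centralises $[\bar{A},\bar{B}]$, that is $[C,[A,B]]\subseteq N$, as claimed.
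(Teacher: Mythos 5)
Your argument is the standard Hall--Witt proof of the Three Subgroups Lemma. The paper itself offers no proof of this statement (it is quoted from Hall), so there is nothing to compare against; your reduction to the quotient $\bar{G}=G/N$, the use of the conjugation-stripping identity $[x^y,z]=[x,z^{y^{-1}}]^y$, and the final passage from generators of $[\bar{A},\bar{B}]$ to the whole subgroup are all correct. One caveat worth flagging: identity (5) as printed in Section~\ref{sec_comm} is not quite the Hall--Witt identity for the conventions $[x,y]=xyx^{-1}y^{-1}$, $x^y=yxy^{-1}$; a direct free-group computation shows the correct form is $[x^y,[z,y]]\,[y^z,[x,z]]\,[z^x,[y,x]]=1$, i.e.\ the inner commutators of the first two factors are transposed in the paper. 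This does not damage your proof: after your rewriting, the relevant inner commutators ($[\bar{c},\bar{b}^{-1}]$ and $[\bar{a},\bar{c}^{-1}]$ as you have them, or $[\bar{b}^{-1},\bar{c}]$ and $[\bar{c}^{-1},\bar{a}]$ with the corrected identity) still lie in $[\bar{B},\bar{C}]$ and $[\bar{C},\bar{A}]$ respectively, so the first two factors vanish and the conclusion follows exactly as you wrote.
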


To deal simultaneously with commutator calculus in both $G$ and $Aut\;G$ we introduce the Holomorph group of $G.$

\begin{defi}
Let $G$ be a group. The \emph{Holomorph group} of $G$ is defined as the semidirect product
$$Hol(G)=G\rtimes Aut(G),$$
where the multiplication is given by $(g_1,f_1)(g_2,f_2)=(g_1f_1(g_2),f_1f_2).$
\end{defi}

Throughout this Section, given $x\in G,$ $f\in Aut(G),$ $H\triangleleft G$ and $K\triangleleft Aut(G),$ we denote by $[f,x]\in G$ the element $[f,x]=f(x)x^{-1},$ and by $[K,H]$ the subgroup given by
$[K,H]=\{[f,x]\in G \; ; \; f\in K, x\in H \}.$

\begin{lema}
\label{lema_coop_general}
Given a prime $p.$ If $f\in IA_k^\bullet(G)$ and $x\in G^\bullet_l,$ then $f(x)x^{-1}\in G^\bullet_{k+l}.$ Equivalently, $[IA_k^\bullet(G),G^\bullet_l]<G^\bullet_{k+l}.$
\end{lema}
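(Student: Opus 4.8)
The plan is to fix $f\in IA_k^\bullet(G)$ and prove the equivalent statement $[f,x]\in G^\bullet_{k+l}$ for every $x\in G^\bullet_l$ by induction on $l$, working throughout inside $Hol(G)$ so that $[f,x]=f(x)x^{-1}$ is a genuine group commutator. The base case $l=1$ is exactly the definition of $IA_k^\bullet(G)$. For the inductive step, I first observe that by identity $(4)$ of Subsection \ref{sec_comm} the assignment $x\mapsto [f,x]$ satisfies $[f,xy]=[f,x]\,[f,y]^x$; since $G^\bullet_{k+l}$ is characteristic, hence normal, this identity shows that $\{x\in G:[f,x]\in G^\bullet_{k+l}\}$ is a subgroup of $G$ (closure under inverses follows from $[f,xx^{-1}]=1$). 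It therefore suffices to verify the claim on a set of generators of $G^\bullet_l$.

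Next I would use the recursive description of the two series to produce such generators: $G^\bullet_l$ is generated by the commutators $[g,y]$ with $g\in G$, $y\in G^\bullet_{l-1}$, together with the $p$-th powers $z^p$, where $z\in G^S_{l-1}$ in the Stallings case and $z\in G^Z_{\lceil l/p\rceil}$ in the Zassenhaus case. For the commutator generators I would invoke the Three Subgroups Lemma in $Hol(G)$ with $A=G$, $B=G^\bullet_{l-1}$ and $C=IA_k^\bullet(G)$: the two required inclusions $[G^\bullet_{l-1},[IA_k^\bullet(G),G]]<[G^\bullet_{l-1},G^\bullet_{k+1}]<G^\bullet_{k+l}$ and $[G,[G^\bullet_{l-1},IA_k^\bullet(G)]]<[G,G^\bullet_{k+l-1}]<G^\bullet_{k+l}$ hold by, respectively, the definition of $IA_k^\bullet(G)$ and the inductive hypothesis, together with the relation $[G^\bullet_a,G^\bullet_b]<G^\bullet_{a+b}$; the lemma then yields $[IA_k^\bullet(G),[G,G^\bullet_{l-1}]]<G^\bullet_{k+l}$, as required.

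For the power generators I would set $a=[f,z]$, so that $f(z)=az$ and $[f,z^p]=f(z)^p z^{-p}=(az)^p z^{-p}$, and expand $(az)^p$ by P. Hall's collection formula (Theorem \ref{teo_hall}) with $\alpha=1$, obtaining $(az)^p=a^p z^p\prod_i R_i^{n_i}$, where the $R_i$ are the complex commutators of weight $w_i\ge 2$ in the two components $a$ and $z$, and $p\mid n_i$ whenever $2\le w_i\le p-1$. By induction $a\in G^\bullet_{k+m}$ (with $m=l-1$ or $m=\lceil l/p\rceil$) and $z\in G^\bullet_{m}$, so any nontrivial $R_i$, necessarily containing $c_i\ge 1$ entries $a$ and $d_i$ entries $z$, lies in $G^\bullet_{c_ik+w_im}$. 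A short arithmetic check then places every factor in $G^\bullet_{k+l}$: the term $a^p$ lands there by the power property of the series, and each $R_i^{n_i}$ lands there either because its weight is already large enough or, for $2\le w_i\le p-1$, because the factor $p$ dividing $n_i$ combines with $(G^\bullet_{c_ik+w_im})^p$. Since $G^\bullet_{k+l}$ is normal, conjugating by $z^p$ preserves it, whence $[f,z^p]\in G^\bullet_{k+l}$.

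The main obstacle is precisely this last step in the Zassenhaus case with $p$ odd. There the power property is only $(G^Z_c)^p<G^Z_{pc}$, and the naive estimate reducing the product $\prod_i a^{z^i}$ to $a^p$ modulo $G^Z_{k+l}$ breaks down, since the correcting commutators $[z,a]$ need not lie deep enough in the filtration. It is exactly the divisibility of the exponents $n_i$ by $p$ for the low-weight commutators, guaranteed by Hall's theorem, that compensates for the slow descent of $p$-th powers in the Zassenhaus series; isolating and exploiting this is the crux of the argument, whereas the Stallings case (where weight alone suffices) and the commutator generators are comparatively routine.
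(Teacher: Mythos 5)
Your argument is correct and its two main engines --- the Three Subgroups Lemma applied inside $Hol(G)$ for the commutator generators, and P.~Hall's collection formula for the $p$-th power generators --- are exactly the ones the paper uses. The genuine difference is the choice of generating set for the Zassenhaus layers. You invoke the Lazard--Jennings recursion $G^Z_l=[G,G^Z_{l-1}]\,(G^Z_{\lceil l/p\rceil})^p$, which lets you treat a single first power $z^p$ with $z\in G^Z_{\lceil l/p\rceil}$ and apply Theorem \ref{teo_hall} with $\alpha=1$; your weight and divisibility bookkeeping (namely $a^p\in G^Z_{pk+pm}$, and $R_i^{n_i}\in G^Z_{pc_ik+pw_im}$ when $2\le w_i<p$ so that $p\mid n_i$) checks out, including the borderline case $l=2$. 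The paper instead works directly from the defining product formula \eqref{zassenhaus_series_def}, taking as generators the elements $x_i^{p^j}$ with $x_i\in G_i$ (lower central series) and $ip^j\ge l$, and splitting into the cases $i\ge l+1$ (a pure commutator, handled by the Three Subgroups Lemma) and $i\le l$ (so $j\ge 1$, handled by Hall's formula with $\alpha=j$). Your route is arithmetically cleaner and makes transparent why the divisibility of the low-weight exponents is what rescues the slow descent of $p$-th powers, but it rests on the recursion above, which for the series defined by \eqref{zassenhaus_series_def} is a nontrivial theorem that the paper neither states nor proves; you must either prove it or cite it (it is standard, due to Jennings and Lazard). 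The remaining differences are cosmetic: for the Stallings series the paper expands $[f,z^p]=[f,z][f,z]^z\cdots[f,z]^{z^{p-1}}$ instead of invoking Hall's formula, and it handles products of generators by an explicit telescoping computation rather than your (equivalent) observation that the identity $[f,xy]=[f,x][f,y]^x$ makes $\{x\in G\,:\,[f,x]\in G^\bullet_{k+l}\}$ a subgroup.
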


\begin{rem}
This Lemma is a generalization of Lemma 3.7 in \cite{coop}.
However, in the proof of that Lemma the author asserted that the result for the Zassenhaus filtration follows using the same argument that he used for the Stallings filtration.
Reviewing his proof we found that this is not clear.
To handle the Zassenhaus filtration case we rely instead on Hall identities with weights (see Section \ref{sec_comm}).
\end{rem}

\begin{proof}
\textbf{For the case of the Stallings series,} recall that $G^S_{l+1}=[G,G^S_l](G_l^S)^p.$ Thus every element of $G^S_{l+1}$ is a product of elements of the form $[x,y]\in G^S_{l+1},$ $z^p\in G^S_{l+1},$ where $x\in G$ and $y,z\in G^S_l.$
So first of all we will prove the statement for such elements and later for any product of them.

We proceed by induction on $l.$ The base case $l=1$ follows from the definition of $IA_k^S(G).$ Assume that the lemma holds for $l.$ We prove that this lemma also holds for $l+1.$

Consider elements of the form $[x,y]\in G^S_{l+1}$ and $z^p\in G^S_{l+1},$ where $x\in G$ and $y,z\in G^S_l.$

We first show that $f([x,y])[x,y]^{-1}\in G^S_{k+l+1}$ for $f\in IA_k^S(G).$ The main idea of this proof originally comes from \cite{andrea}. First note that
$$f([x,y])[x,y]^{-1}=[f,[x,y]]\in [IA_k^S(G),[G,G^S_l]].$$
The idea is to apply the Three Subgroup Lemma for the subgroups $IA_k^S(G),$ $G,$ $G^S_l$ of $Hol(G).$
Observe that by induction and the definition of the Stallings series,
\begin{align*}
[[IA_k^S(G),G^S_l],G] & <[G^S_{k+l},G]< G^S_{k+l+1}, \\
[[IA_k^S(G),G],G^S_l] & <[G^S_{k+1},G^S_l]< G^S_{k+l+1}.
\end{align*}
Moreover, since $G^S_{k+l+1}$ is a normal subgroup of $G,$ we can view $G^S_{k+l+1}$ as a normal subgroup of $Hol(G).$
Therefore the Three Subgroup Lemma implies that
$$[IA_k^S(G),[G,G^S_l]]<G^S_{k+l+1}.$$
Next we show that $f(z^p)z^{-p}\in G^S_{k+l+1}.$ We first prove that
$$f(z^p)z^{-p}\equiv (f(z)z^{-1})^p \text{ mod } G^S_{k+l+1}.$$
Observe that the following formula holds
\begin{equation}
\label{eq_z^p_stall}
f(z^p)z^{-p}=[f,z^p]=[f,z][f,z]^z\ldots [f,z]^{z^{p-1}}.
\end{equation}
By induction, $[f,z]\in G^S_{k+l}$ and by normality, $[f,z]^{z^i}\in G^S_{k+l}$ for each $i=1,\ldots, p-1.$ Furthermore,
$[[f,z],z^i]\in G^S_{k+l+1}$ so that $[f,z] \equiv [f,z]^{z^i} \text{ mod } G^S_{k+l+1}.$ As a consequence, by induction and the properties of the Stallings series, from formula \eqref{eq_z^p_stall} we get that $f(z^p)z^{-p}\equiv (f(z)z^{-1})^p \equiv 1 \text{ mod } G^S_{k+l+1}.$
Therefore, $f(z^p)z^{-p}\in G^S_{k+l+1}.$

Finally, we prove the statement for products of elements of $G_{l+1}^S.$
Let $f\in IA_k^S(G)$ and $\eta_i\in G^S_{l+1}.$ Using the fact that $f(\eta_i)\eta_i^{-1}\in G^S_{k+l+1}$ for all $i,$ we have:
\begin{align*}
f\left(\prod_{i=1}^n \eta_i\right)\left(\prod_{i=1}^n \eta_i\right)^{-1}= & f(\eta_1)\cdots f(\eta_{n-1})f(\eta_n)\eta_n^{-1}\eta_{n-1}^{-1}\cdots \eta_1 \\
\equiv & f(\eta_1)\cdots f(\eta_{n-1})\eta_{n-1}^{-1}\cdots \eta_1^{-1} \quad(\text{mod }G^S_{k+l+1})\\
\equiv & f(\eta_1)\cdots f(\eta_{n-2})\eta_{n-2}^{-1}\cdots \eta_1^{-1} \quad(\text{mod }G^S_{k+l+1})\\
\equiv & \cdots \equiv f(\eta_{1})f(\eta_{2})\eta_{2}^{-1}\eta_{1}^{-1}\equiv f(\eta_{1})\eta_1 \equiv 1\quad(\text{mod }G^S_{k+l+1}).
\end{align*}
Therefore,
$$f\left(\prod_{i=1}^n \eta_i\right)\left(\prod_{i=1}^n \eta_i\right)^{-1}\in G^S_{k+l+1}.$$

\textbf{For the case of the Zassenhaus series,} recall that
$G^Z_l=\prod_{ip^j\geq l}(G_i)^{p^j}.$ Thus every element of $G^Z_l$ is a product of elements of the form $x_i^{p^j}\in G^Z_l$ with $x_i\in G_i$ and $ip^j\geq l.$
So first of all we will prove the statement for such elements and later for any product of them.

We proceed by induction on $l.$ The base case $l=1$ follows from the definition of $IA_k^Z(G).$ Assume that the lemma holds for $l.$ We prove that this lemma also holds for $l+1.$

Consider $f\in I_k^Z(G)$ and $x_i^{p^j}\in G^Z_{l+1}$ with $x_i\in G_i,$ i.e. $ip^j\geq l+1.$ We want to show that $f(x_i^{p^j})x_i^{-p^j}\in G^Z_{l+k+1}.$

If $i\geq l+1,$ then $x_i\in G_{l+1}.$ Then we can rewrite $x_i$ as $[x,x_l]$ with $x\in G$ and $x_l \in G_l.$

As in the case of Stallings, applying the Three Subgroup Lemma for the subgroups $IA^Z_k(G),G,G_l< Hol(G),$ we have that
$[IA^Z_k(G),[G,G_l]]<G^Z_{k+l+1}.$
Then we have that
$$f(x_i^{p^j})=f(x_i)^{p^j}=f([x,x_l])^{p^j}\equiv [x,x_l]^{p^j}=x_i^{p^j} \text{ mod } G^Z_{k+l+1}.$$

If $i\leq l,$ then the condition $ip^j\geq l+1$ implies that $j\geq 1.$

Observe that
$f(x_i^{p^j})=f(x_i)^{p^{j}}.$
By induction hypothesis, $f(x_i)x^{-1}_i\in G^Z_{i+k}.$
Then there exists an element $y_{i+k}\in G^Z_{i+k}$ such that $f(x_i)=x_iy_{i+k}.$
Next we show that
$$(x_iy_{i+k})^p\equiv x_i^p \quad (\text{mod } G^Z_{l+k+1}).$$
By Theorem \eqref{teo_hall} we have that
$$(x_iy_{i+k})^{p}=R_1^{n_1}R_2^{n_2}\cdots R_r^{n_r}\cdots$$
where
$$R_1,R_2,\ldots ,R_r,\ldots \qquad (R_1=x_i,\quad R_2=y_{i+k})$$
are the various formally distinct complex commutators of $x_i$ and $y_{i+k}$ arranged in increasing weights order, and $n_1,n_2,\ldots ,n_r,\ldots$ positive integers such that $n_1=n_2=p^j$ and if the weight $w_r$ of $R_r$ in $x_i$ and $y_{i+k}$ satisfies $p^{\beta-1}\leq w_r<p^\beta< p^j$ then $n_r$ is divisible by $p^{\beta-1}.$

Next we prove that $R_r^{n_r}\in G^Z_{l+k+1}$ for $r\geq 2.$
\begin{itemize}
\item \textbf{For $r=2,$} we know that $R_2=y_{i+k}$ and $n_2=p^j.$
Since $y_{i+k}\in G^Z_{i+k},$ by the properties of Zassenhaus series we have that
$$y_{i+k}^{p^j}\in (G^Z_{i+k})^{p^j}\leq G^Z_{p^j(i+k)}\leq G^Z_{ip^j+k}\leq G^Z_{l+k+1}.$$
\item \textbf{For $r\geq 3,$} as $R_r$ are complex commutators of weight $w_r$ in the two components $x_k,$ $y_{i+k},$ we have that $w_r\geq 2.$
As a consequence,
at least one component of $R_r$ has to be $y_{i+k},$ because if it is not the case then $R_r$ has to be $1.$

If the weight $w_r$ of $R_r$ in $x_k$ and $y_{i+k}$ satisfies that $p^{\beta-1}\leq w_i<p^\beta ,$ since at least one component of $R_r$ has to be $y_{i+k},$ by the properties of the Zassenhaus series, we have that $R_r\in G^Z_{(\omega_r-1)i+(i+k)}=G^Z_{\omega_ri+k}\leq G^Z_{p^{\beta-1}i+k}.$
Moreover, in the case $p^{\beta-1}\leq w_r<p^\beta ,$ we have that $p^{j-\beta+1}\mid n_r$ and by the properties of the Zassenhaus series, we have that
$$R_r^{n_r}\in (G^Z_{\omega_ri+k})^{n_r}\leq (G^Z_{p^{\beta-1}i+k})^{p^{j-\beta+1}}\leq G^Z_{p^{j-\beta+1}(p^{\beta-1}i+k)}\leq G^Z_{ip^j+k}\leq G^Z_{l+k+1}.$$

On the other hand, if the weight $w_r$ of $R_r$ in $x_i$ and $y_{i+k}$ satisfies that $w_r\geq p^j,$ since at least one component of $R_r$ has to be $y_{i+k},$ by the properties of the Zassenhaus series, we have that
$R_r\in G^Z_{(\omega_r-1)i+(i+k)}=G^Z_{\omega_ri+k}\leq G^Z_{ip^j+k}\leq G^Z_{l+k+1}.$ As a consequence, $$R_r^{n_r}\in G^Z_{l+k+1}.$$
\end{itemize}

Therefore,
$(x_iy_{i+k})^{p^j}\equiv x_i^{p^j} \quad (\text{ mod } G^Z_{l+k+1} ),$
i.e. $f(x_i^{p^j})x_i^{-p^j}\in G^Z_{l+k+1}.$

Finally, we prove the statement for products. The same argument of the case of Stallings works here, obtaining that if 
$f\in IA^Z_k(G)$ and $\eta_i\in G^Z_{l+1},$ then
$$f\left(\prod_{i=1}^n \eta_i\right)\left(\prod_{i=1}^n \eta_i\right)^{-1}\in G^S_{l+k+1}.$$
\end{proof}

As a direct consequence of Lemma \eqref{lema_coop_general}, using ideas of S. Andreadakis (see Theorem 1.1 in \cite{andrea}),
we get the following result.
\begin{cor}
\label{cor_IA_bullet}
For any two elements $\varphi\in IA^\bullet_k(G)$ and $\psi\in IA^\bullet_l(G),$ the commutator $[\varphi, \psi]$ is contained in $IA^\bullet_{k+l}(G).$ Equivalently, $[IA^\bullet_k(G),IA^\bullet_l(G)] < IA^\bullet_{k+l}(G).$
\end{cor}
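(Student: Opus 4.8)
The plan is to run the classical Andreadakis argument (Theorem 1.1 in \cite{andrea}) inside the Holomorph $Hol(G)=G\rtimes Aut(G)$, feeding in Lemma \ref{lema_coop_general} as the basic estimate and using the Three Subgroups Lemma as the engine. I would regard $IA_k^\bullet(G)$ and $IA_l^\bullet(G)$ as subgroups of $Aut(G)\leq Hol(G)$, and $G=G^\bullet_1$ as the normal factor of $Hol(G)$. A short direct check in $Hol(G)$ shows that, for $f\in Aut(G)$ and $x\in G$, the genuine group-theoretic commutator $[f,x]$ computed in $Hol(G)$ lands in $G$ and equals $f(x)x^{-1}$; thus the ad hoc bracket notation $[K,H]$ introduced before the lemma agrees with the honest commutator subgroup, and Lemma \ref{lema_coop_general} bounds the generating commutators inside the relevant $G^\bullet$, hence also the subgroup they generate. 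Since each $G^\bullet_m$ is a characteristic subgroup of $G$ and $G\lhd Hol(G)$, the subgroup $G^\bullet_{k+l+1}$ is normal in $Hol(G)$, which is exactly the hypothesis needed to invoke the Three Subgroups Lemma with ambient group $Hol(G)$ and normal subgroup $N=G^\bullet_{k+l+1}$.

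First I would apply the Three Subgroups Lemma to $A=IA_k^\bullet(G)$, $B=IA_l^\bullet(G)$ and $C=G$. To verify its two hypotheses, Lemma \ref{lema_coop_general} (applied with the subgroup $G^\bullet_1=G$) gives $[B,C]=[IA_l^\bullet(G),G]<G^\bullet_{l+1}$ and $[C,A]=[IA_k^\bullet(G),G]<G^\bullet_{k+1}$. Feeding these back into Lemma \ref{lema_coop_general} once more yields
\[
[A,[B,C]]<[IA_k^\bullet(G),G^\bullet_{l+1}]<G^\bullet_{k+l+1},\qquad [B,[C,A]]<[IA_l^\bullet(G),G^\bullet_{k+1}]<G^\bullet_{k+l+1}.
\]
Both expressions lie in $N=G^\bullet_{k+l+1}$, so the Three Subgroups Lemma delivers the remaining bracket, $[C,[A,B]]=[G,[IA_k^\bullet(G),IA_l^\bullet(G)]]<G^\bullet_{k+l+1}$.

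Finally I would read off the conclusion. The containment $[G,[IA_k^\bullet(G),IA_l^\bullet(G)]]<G^\bullet_{k+l+1}$ says precisely that $\theta(x)x^{-1}=[\theta,x]\in G^\bullet_{k+l+1}$ for every $x\in G$ and every $\theta$ in the commutator subgroup $[IA_k^\bullet(G),IA_l^\bullet(G)]$; specializing to $\theta=[\varphi,\psi]$ shows that $[\varphi,\psi]$ acts trivially on $G/G^\bullet_{k+l+1}$, that is $[\varphi,\psi]\in IA_{k+l}^\bullet(G)$, which is the claim. I expect the only genuinely delicate point to be the bookkeeping at the interface with the ambient group: making sure the commutator in $Hol(G)$ matches the bracket $[f,x]=f(x)x^{-1}$, that the ordering of $A,B,C$ is aligned with the way the Three Subgroups Lemma is stated so that the two verified hypotheses are exactly $[A,[B,C]]$ and $[B,[C,A]]$ with conclusion $[C,[A,B]]$, and that normality of the characteristic subgroups $G^\bullet_m$ is inherited in $Hol(G)$. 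Everything else reduces to two applications of Lemma \ref{lema_coop_general}.
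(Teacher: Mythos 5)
Your argument is correct and is essentially the paper's own proof: both work in $Hol(G)$, verify the two hypotheses of the Three Subgroups Lemma for the triple $IA^\bullet_k(G)$, $IA^\bullet_l(G)$, $G$ by two applications of Lemma \ref{lema_coop_general}, and conclude $[[IA^\bullet_k(G),IA^\bullet_l(G)],G]<G^\bullet_{k+l+1}$. Your extra care about identifying the bracket $[f,x]=f(x)x^{-1}$ with the genuine commutator in $Hol(G)$ and about normality of $G^\bullet_{k+l+1}$ in $Hol(G)$ only makes explicit what the paper leaves implicit.
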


\begin{proof}
Notice that $G,$ $IA^\bullet_k(G),$ $IA^\bullet_l(G)$ are subgroups of $Hol(G).$ By Lemma \eqref{lema_coop_general},
\begin{align*}
[IA^\bullet_l(G),[IA^\bullet_k(G),G]] & < [IA^\bullet_l(G),G^\bullet_{k+1}]< G^\bullet_{k+l+1}, \\
[IA^\bullet_k(G),[G, IA^\bullet_l(G)]] & < [IA^\bullet_k(G),G^\bullet_{l+1}]< G^\bullet_{k+l+1}.
\end{align*}
Using the Three Subgroups Lemma, we get that $[[IA^\bullet_k(G),IA^\bullet_l(G)],G]< G^\bullet_{k+l+1}.$

\end{proof}

Going back to our main problem we now show the expected result:

\begin{prop}
\label{prop_act_aut_bullet}
The natural action of $Aut(\mathcal{N}^\bullet_{k+1})$ on $Hom(\mathcal{N}_1^\bullet, \mathcal{L}^\bullet_{k+1})$
factors through $Aut(\mathcal{N}^\bullet_{1}).$
\end{prop}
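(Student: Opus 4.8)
The plan is to make the conjugation action of $Aut\,\mathcal{N}^\bullet_{k+1}$ on its normal subgroup $i\big(Hom(\mathcal{N}^\bullet_1,\mathcal{L}^\bullet_{k+1})\big)$ completely explicit, read off that it depends on an automorphism only through the automorphisms induced on the two extreme layers $\mathcal{N}^\bullet_1$ and $\mathcal{L}^\bullet_{k+1}$, and finally show that the action on $\mathcal{L}^\bullet_{k+1}$ is itself governed by the action on $\mathcal{N}^\bullet_1$. To begin, I would transport the conjugation action through the isomorphism $i$ of Corollary \ref{cor_exact_seq_modp_L}. Write $\pi\colon \mathcal{N}^\bullet_{k+1}\to\mathcal{N}^\bullet_1$ for the canonical projection $\gamma\mapsto[\gamma]$, and for $\Phi\in Aut\,\mathcal{N}^\bullet_{k+1}$ denote by $\bar\Phi_1\in Aut\,\mathcal{N}^\bullet_1$ and $\bar\Phi_{\mathcal L}\in Aut\,\mathcal{L}^\bullet_{k+1}$ the automorphisms induced on the characteristic subquotients $\mathcal{N}^\bullet_1$ and $\mathcal{L}^\bullet_{k+1}$; these exist because $\Gamma^\bullet_2/\Gamma^\bullet_{k+2}$ and $\mathcal{L}^\bullet_{k+1}$ are characteristic in $\mathcal{N}^\bullet_{k+1}$, and they satisfy $\pi\circ\Phi=\bar\Phi_1\circ\pi$. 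A direct substitution using $i(f)(\gamma)=f([\gamma])\gamma$ then gives, for every $\gamma$,
\[
\big(\Phi\circ i(f)\circ\Phi^{-1}\big)(\gamma)=\bar\Phi_{\mathcal L}\big(f(\bar\Phi_1^{-1}[\gamma])\big)\,\gamma = i\big(\bar\Phi_{\mathcal L}\circ f\circ\bar\Phi_1^{-1}\big)(\gamma),
\]
so that $\Phi\cdot f=\bar\Phi_{\mathcal L}\circ f\circ\bar\Phi_1^{-1}$. Hence the action of $\Phi$ on $Hom(\mathcal{N}^\bullet_1,\mathcal{L}^\bullet_{k+1})$ depends on $\Phi$ only through the pair $(\bar\Phi_1,\bar\Phi_{\mathcal L})$.

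It therefore remains to show that $\bar\Phi_{\mathcal L}$ is itself determined by $\bar\Phi_1$, i.e. that the induced action of $Aut\,\mathcal{N}^\bullet_{k+1}$ on $\mathcal{L}^\bullet_{k+1}$ factors through $Aut\,\mathcal{N}^\bullet_1$. Equivalently, I would prove that if $\Phi$ lies in the kernel $IA^p(\mathcal{N}^\bullet_{k+1})=IA^\bullet_1(\mathcal{N}^\bullet_{k+1})$ of the natural map $Aut\,\mathcal{N}^\bullet_{k+1}\to Aut\,\mathcal{N}^\bullet_1$, then $\bar\Phi_{\mathcal L}=\mathrm{id}$. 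This is where Lemma \ref{lema_coop_general} is the crucial input: applying it to $G=\mathcal{N}^\bullet_{k+1}$ with IA-degree $1$ and layer $k+1$ yields $[IA^\bullet_1(\mathcal{N}^\bullet_{k+1}),(\mathcal{N}^\bullet_{k+1})^\bullet_{k+1}]<(\mathcal{N}^\bullet_{k+1})^\bullet_{k+2}$. But $(\mathcal{N}^\bullet_{k+1})^\bullet_{k+2}=\Gamma^\bullet_{k+2}/\Gamma^\bullet_{k+2}$ is trivial, while $(\mathcal{N}^\bullet_{k+1})^\bullet_{k+1}\cong\mathcal{L}^\bullet_{k+1}$ by Proposition \ref{prop_ident_L_versal}; hence such a $\Phi$ fixes $\mathcal{L}^\bullet_{k+1}$ pointwise, i.e. $\bar\Phi_{\mathcal L}=\mathrm{id}$.

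Combining the two steps finishes the proof: when $\bar\Phi_1=\mathrm{id}$ the second step gives $\bar\Phi_{\mathcal L}=\mathrm{id}$, and the formula of the first step then gives $\Phi\cdot f=f$ for all $f$. Thus $IA^p(\mathcal{N}^\bullet_{k+1})$, the kernel of $Aut\,\mathcal{N}^\bullet_{k+1}\to Aut\,\mathcal{N}^\bullet_1$, acts trivially, so the action descends to $Aut\,\mathcal{N}^\bullet_1$ as claimed. I expect the only genuinely delicate point to be the second step: keeping the degree bookkeeping in Lemma \ref{lema_coop_general} correct and correctly identifying $(\mathcal{N}^\bullet_{k+1})^\bullet_{k+1}$ with $\mathcal{L}^\bullet_{k+1}$ and $(\mathcal{N}^\bullet_{k+1})^\bullet_{k+2}$ with the trivial group through the feeding-back of Proposition \ref{prop_ident_L_versal}. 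By contrast, the explicit action formula in the first step is routine, provided one is careful with the direction of the precomposition by $\bar\Phi_1^{-1}$.
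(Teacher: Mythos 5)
Your proposal is correct and follows essentially the same route as the paper: it writes the conjugation action as $\Phi\cdot f=\bar\Phi_{\mathcal L}\circ f\circ\bar\Phi_1^{-1}$ (the paper's formula $(h,f)\mapsto(x\mapsto h(f(h^{-1}x)))$) and then invokes Lemma \ref{lema_coop_general} together with Proposition \ref{prop_ident_L_versal} to see that elements of $IA^p(\mathcal{N}^\bullet_{k+1})$ fix $\mathcal{L}^\bullet_{k+1}$ pointwise, so the action factors through $Aut\,\mathcal{N}^\bullet_1$. Your write-up is simply a more detailed version of the paper's argument.
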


\begin{proof}
In virtue of Corollary \eqref{cor_exact_seq_modp_L}, the natural action of $Aut(\mathcal{N}^\bullet_{k+1})$ on $Hom(\mathcal{N}_1^\bullet, \mathcal{L}^\bullet_{k+1})$ is given by
\begin{align*}
Aut(\mathcal{N}^\bullet_{k+1})\times Hom(\mathcal{N}_1^\bullet, \mathcal{L}^\bullet_{k+1})&\longrightarrow Hom(\mathcal{N}_1^\bullet, \mathcal{L}^\bullet_{k+1}) \\
(h,f) & \longmapsto (x\mapsto h(f(h^{-1}x)))),
\end{align*}
where $h^{-1}x$ is the action of $h^{-1}\in Aut(\mathcal{N}^\bullet_{k+1})$ on $x\in \mathcal{N}^\bullet_1$ via the surjection
$Aut\;\mathcal{N}^\bullet_{k} \rightarrow Aut \;\mathcal{N}^\bullet_1.$
Moreover, by Proposition \eqref{prop_ident_L_versal} and Lemma \eqref{lema_coop_general}, we know that if $h\in IA^p(\mathcal{N}^\bullet_{k+1})$ and $y\in \mathcal{L}_{k+1}^\bullet,$ then
$h(y)=y.$ Therefore the action of $Aut\;\mathcal{N}^\bullet_{k}$ on $\mathcal{L}_{k+1}^\bullet$ factors through $Aut(\mathcal{N}^\bullet_{1})$
via the surjection $Aut\;\mathcal{N}^\bullet_{k} \rightarrow Aut \;\mathcal{N}^\bullet_1$ and we get the result.
\end{proof}

As a consequence we have that

\begin{prop}
Let $\Gamma$ be a free group of finite rank $n > 1.$ The extension
\begin{equation*}
\xymatrix@C=7mm@R=10mm{0 \ar@{->}[r] & Hom(\mathcal{N}^\bullet_1,\mathcal{L}^\bullet_{k+1}) \ar@{->}[r]^-{i} & Aut\;\mathcal{N}^\bullet_{k+1} \ar@{->}[r]^-{\psi^\bullet_k} & Aut \;\mathcal{N}^\bullet_k \ar@{->}[r] & 1, }
\end{equation*}
is not central.
\end{prop}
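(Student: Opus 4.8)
The plan is to reduce the centrality question to the triviality of an action of $GL_n(\mathbb{Z}/p)$ on the kernel, and then to exhibit a single element acting nontrivially. Recall the general principle: for an extension $0\to A\to E\xrightarrow{\psi} G\to 1$ with $A$ abelian, the conjugation action of $E$ on $A$ descends to a well-defined action of $G=E/A$ on $A$, and the extension is central precisely when this induced $G$-action is trivial. Applied to our extension, non-centrality is therefore equivalent to the assertion that the natural action of $Aut\;\mathcal{N}^\bullet_{k+1}$ on $Hom(\mathcal{N}^\bullet_1,\mathcal{L}^\bullet_{k+1})$ is nontrivial. By Proposition \ref{prop_act_aut_bullet} this action factors through $Aut(\mathcal{N}^\bullet_1)$; since $\mathcal{N}^\bullet_1=\Gamma/\Gamma^\bullet_2\cong(\mathbb{Z}/p)^n$ for both $\bullet=S$ and $\bullet=Z$, we have $Aut(\mathcal{N}^\bullet_1)=GL_n(\mathbb{Z}/p)$, and the induced action is given by the explicit conjugation formula $g\cdot f=\bar g\circ f\circ \bar g^{-1}$, where $\bar g\in GL_n(\mathbb{Z}/p)$ acts on the domain $\mathcal{N}^\bullet_1$ as the standard representation and on the target $\mathcal{L}^\bullet_{k+1}$ through its functorial action.

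It thus suffices to produce one $g$ and one $f$ with $g\cdot f\neq f$. Fix a basis $\bar x_1,\dots,\bar x_n$ of $\mathcal{N}^\bullet_1$ coming from a free basis $x_1,\dots,x_n$ of $\Gamma$. Since $\Gamma$ has rank $n\geq 2$, the layer $\mathcal{L}^\bullet_{k+1}$ is nonzero (the associated graded restricted Lie algebra of a free group of rank $\geq 2$ has nonzero components in every degree), so we may choose $0\neq c\in\mathcal{L}^\bullet_{k+1}$. Define $f\in Hom(\mathcal{N}^\bullet_1,\mathcal{L}^\bullet_{k+1})$ by $f(\bar x_1)=c$ and $f(\bar x_i)=0$ for $i\neq 1$, and let $g\in GL_n(\mathbb{Z}/p)$ be the involution swapping $\bar x_1$ and $\bar x_2$ and fixing the remaining basis vectors. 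Then $g^{-1}=g$, and $(g\cdot f)(\bar x_1)=g\big(f(g^{-1}\bar x_1)\big)=g\big(f(\bar x_2)\big)=g(0)=0$, whereas $f(\bar x_1)=c\neq 0$. Hence $g\cdot f\neq f$, so the $GL_n(\mathbb{Z}/p)$-action on $Hom(\mathcal{N}^\bullet_1,\mathcal{L}^\bullet_{k+1})$ is nontrivial.

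To turn this into an element of $Aut\;\mathcal{N}^\bullet_k$ acting nontrivially, I would note that the composite $Aut\;\mathcal{N}^\bullet_k\to\cdots\to Aut\;\mathcal{N}^\bullet_1=GL_n(\mathbb{Z}/p)$ is surjective, being an iterate of the epimorphisms $\psi^\bullet$ of Corollary \ref{cor_exact_seq_modp_L}; alternatively, $g$ is induced by the automorphism of $\Gamma$ permuting $x_1$ and $x_2$, which descends to every $\mathcal{N}^\bullet_k$. Consequently some element of $Aut\;\mathcal{N}^\bullet_k$ acts nontrivially on the kernel, and the extension is not central. The only points requiring care—and where I expect to spend the most attention—are the two structural inputs: the factorization of the action through $GL_n(\mathbb{Z}/p)$ together with its explicit conjugation formula (Proposition \ref{prop_act_aut_bullet}), and the nonvanishing of $\mathcal{L}^\bullet_{k+1}$ for $n\geq 2$. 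Once these are in hand, non-centrality follows from the single computation above, which uses the hypothesis $n>1$ only through the existence of a second basis vector to permute with.
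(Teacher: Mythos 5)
Your proposal is correct and follows essentially the same route as the paper: the paper likewise takes $f$ sending $[x_1]$ to a fixed nonzero element of $\mathcal{L}^\bullet_{k+1}$ (there, the class of an alternating commutator of length $k+1$) and killing the other generators, conjugates by the automorphism induced by the transposition $(x_1\,x_2)$ of the free basis, and observes $\overline{h}(f(\overline{h}^{-1}[x_1]))=0\neq f([x_1])$. The only cosmetic difference is that you invoke nonvanishing of $\mathcal{L}^\bullet_{k+1}$ abstractly via the free restricted Lie algebra, where the paper writes down an explicit nonzero element.
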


\begin{proof}
Let us provide a counterexample to the centrality.

Consider $\Gamma=\langle x_1,\ldots, x_n \rangle$ and $x=[x_1,[x_2,[x_1,[x_2,\ldots]]]]$ a commutator of length $k+1.$
Denote $[x_i]$ the class of $x_i$ in $\mathcal{N}^\bullet_1,$ and $\overline{x}$ the class of $x$ in $\mathcal{L}^\bullet_{k+1}.$
Observe that $\mathcal{N}_1^\bullet$ is a $\mathbb{Z}/p$-vector space with basis $\{[x_1],\ldots ,[x_n]\}.$
Let $f\in Hom(\mathcal{N}_1^\bullet,\mathcal{L}^\bullet_{k+1})$ be the homomorphism defined on the basis of $\mathcal{N}_1^\bullet$ by
$f([x_1])=\overline{x}$ and $f([x_i])=0$ for $1<i\leq n.$

Consider $h=(x_1x_2)\in \mathfrak{S}_n\subset Aut \; \Gamma.$
Since $\Gamma^\bullet_{k+2}$ is a characteristic subgroup of $\Gamma,$
$h$ induces an element $\overline{h}\in Aut\;\mathcal{N}^\bullet_{k+1}.$ 
Then we have that
$$f([x_1])=\overline{x}\quad \text{and} \quad \overline{h}(f((\overline{h})^{-1}[x_1]))=\overline{h}(f([x_2]))=\overline{h}(1)=1.$$
Therefore the extension is not central.
\end{proof}

Whereas, if we restrict the extension of this Proposition to $IA^p(\mathcal{N}^\bullet_{k+1}),$ we get that

\begin{prop}
\label{prop_cent_ext_IA}
Let $\Gamma$ be a free group of finite rank $n > 1.$ The extension
\begin{equation*}
\xymatrix@C=7mm@R=10mm{0 \ar@{->}[r] & Hom(\mathcal{N}^\bullet_1, \mathcal{L}^\bullet_{k+1}) \ar@{->}[r]^-{i} & IA^p(\mathcal{N}^\bullet_{k+1}) \ar@{->}[r]^-{\pi} & IA^p(\mathcal{N}^\bullet_k) \ar@{->}[r] & 1 }
\end{equation*}
is central.
\end{prop}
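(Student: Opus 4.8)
The plan is to reduce the centrality to Proposition~\ref{prop_act_aut_bullet}. Recall that an extension $0 \to A \to E \to Q \to 1$ with abelian kernel $A$ is central precisely when the conjugation action of $E$ on $A$, which automatically descends to an action of $Q$ on $A$, is trivial. So after checking that the restriction of the extension of Corollary~\ref{cor_exact_seq_modp_L} to $IA^p(\mathcal{N}^\bullet_{k+1})$ is a genuine short exact sequence, it suffices to show that every $\Phi \in IA^p(\mathcal{N}^\bullet_{k+1})$ acts trivially on $Hom(\mathcal{N}^\bullet_1, \mathcal{L}^\bullet_{k+1})$ by conjugation.

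First I would verify exactness of the restricted sequence. The kernel $Hom(\mathcal{N}^\bullet_1, \mathcal{L}^\bullet_{k+1})$ is already contained in $IA^p(\mathcal{N}^\bullet_{k+1})$: for such an $f$ and the map $i(f) = (\gamma \mapsto f([\gamma])\gamma)$, the difference $i(f)(\gamma)\gamma^{-1} = f([\gamma])$ lies in $\mathcal{L}^\bullet_{k+1} = \Gamma^\bullet_{k+1}/\Gamma^\bullet_{k+2} \subseteq \Gamma^\bullet_2/\Gamma^\bullet_{k+2} = (\mathcal{N}^\bullet_{k+1})^\bullet_2$ for $k \geq 1$, so $i(f)$ acts trivially on $\mathcal{N}^\bullet_1$. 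For surjectivity of $\pi$ onto $IA^p(\mathcal{N}^\bullet_k)$, I take $\phi \in IA^p(\mathcal{N}^\bullet_k)$ and lift it to some $\Phi \in Aut\;\mathcal{N}^\bullet_{k+1}$ via the epimorphism $\psi^\bullet_k$ of Corollary~\ref{cor_exact_seq_modp_L}. The action of $\Phi$ on $\mathcal{N}^\bullet_1 = \Gamma/\Gamma^\bullet_2$ is read off from its image $\phi$ under the composite $Aut\;\mathcal{N}^\bullet_{k+1} \to Aut\;\mathcal{N}^\bullet_k \to Aut\;\mathcal{N}^\bullet_1$, and since $\phi$ is trivial on $\mathcal{N}^\bullet_1$ by hypothesis, $\Phi$ lies in $IA^p(\mathcal{N}^\bullet_{k+1})$. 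This yields the claimed exact sequence.

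For centrality, I would use the explicit action described in the proof of Proposition~\ref{prop_act_aut_bullet}, namely $(h, f) \mapsto (x \mapsto h(f(h^{-1}x)))$, where the outer $h$ acts on $\mathcal{L}^\bullet_{k+1}$ and the inner $h^{-1}$ acts on $x \in \mathcal{N}^\bullet_1$, both through the surjection $Aut\;\mathcal{N}^\bullet_{k} \to Aut\;\mathcal{N}^\bullet_1$. Now take $h = \Phi \in IA^p(\mathcal{N}^\bullet_{k+1})$. By definition $\Phi$ acts trivially on $\mathcal{N}^\bullet_1$, so its image in $Aut(\mathcal{N}^\bullet_1)$ is the identity; hence the inner action fixes $x$, and by Proposition~\ref{prop_act_aut_bullet} the outer action on $\mathcal{L}^\bullet_{k+1}$ is trivial as well. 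Consequently $\Phi \cdot f = (x \mapsto f(x)) = f$ for every $f$, so the conjugation action is trivial and the extension is central.

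The main obstacle is bookkeeping rather than a deep difficulty: the crucial point, that the action on the kernel is insensitive to the part of an automorphism supported in degrees above $1$, is exactly the content of Proposition~\ref{prop_act_aut_bullet}, so once that factorization is in hand the argument is short. The one place that demands care is the surjectivity of the restricted map $\pi$, where I must be sure that a lift of an $IA^p$-automorphism automatically lands in $IA^p$; this rests on the fact that the $\mathcal{N}^\bullet_1$-action of any lift is determined by the automorphism being lifted.
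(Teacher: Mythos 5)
Your proof is correct and follows exactly the route the paper intends: the paper states this proposition without a written proof, as an immediate consequence of Proposition~\ref{prop_act_aut_bullet} (the action on $Hom(\mathcal{N}^\bullet_1,\mathcal{L}^\bullet_{k+1})$ factors through $Aut(\mathcal{N}^\bullet_1)$, where $IA^p$-automorphisms become the identity). Your additional verification that the restricted sequence is genuinely exact, and that a lift of an $IA^p$-automorphism again lies in $IA^p$, fills in details the paper leaves implicit but introduces nothing beyond the paper's argument.
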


\section{Splitting the extensions}
\label{sec_split}
We now discuss whenever the extensions split:
\begin{align*}
& \xymatrix@C=7mm@R=10mm{0 \ar@{->}[r] & Hom(\mathcal{N}^\bullet_1,\mathcal{L}^\bullet_{k+1}) \ar@{->}[r]^-{i} & Aut\;\mathcal{N}^\bullet_{k+1} \ar@{->}[r]^-{\psi^\bullet_k} & Aut \;\mathcal{N}^\bullet_k \ar@{->}[r] & 1, }
\\
& \xymatrix@C=7mm@R=10mm{0 \ar@{->}[r] & Hom(\mathcal{N}^\bullet_1, \mathcal{L}^\bullet_{k+1}) \ar@{->}[r]^{i} & IA^p(\mathcal{N}^\bullet_{k+1}) \ar@{->}[r]^{\psi^\bullet_k} & IA^p(\mathcal{N}^\bullet_k) \ar@{->}[r] & 1 .}
\end{align*}
Throughout this Section, given a prime number $p$ and $n\geq 2$ an integer, we denote by $GL_n(\mathbb{Z}/p)$ the general linear group of degree $n$ over $\mathbb{Z}/p$ and by $\mathfrak{gl}_{n}(\mathbb{Z}/p)$ its associated Lie algebra, i.e. the additive group of matrices $n\times n$ with coefficients in $\mathbb{Z}/p,$ by $SL_n(\mathbb{Z}/p)$ the special linear group of degree $n$ over $\mathbb{Z}/p$ and by $\mathfrak{sl}_{n}(\mathbb{Z}/p)$ its associated Lie algebra, i.e. the subgroup of $\mathfrak{gl}_{n}(\mathbb{Z}/p)$ formed by the matrices of trace zero, and by $H_p$ the group
$\mathcal{N}_1^S=\mathcal{N}_1^Z=\Gamma/\Gamma^p\Gamma_2.$

We first proceed to study the case $k=1.$ In order to deal with this case, we will use the Center Kills Lemma, a result about the transfer maps and $p$-Sylow groups (c.f. Section III.10 in \cite{brown}) and some computations on $H^2(SL_n(\mathbb{Z}/p);\mathfrak{sl}_{n}(\mathbb{Z}/p)).$
Then, to treat the case $k\geq 2,$ we develop some computations of commutators on $IA^p(\mathcal{N}_k^\bullet),$ we show that the second extension does not split and taking a push-out diagram we will get that for $k\geq 2$ the first extension does not split too.

\begin{teo}[Theorem 7 in \cite{chih}]
\label{teo_SL_split}
Let $p$ be a prime number and $n\geq 2$ an integer. The extension
$$\xymatrix@C=7mm@R=10mm{ 0 \ar@{->}[r] & \mathfrak{sl}_{n}(\mathbb{Z}/p) \ar@{->}[r] &  SL_{n}(\mathbb{Z}/p^2) \ar@{->}[r]^{r_p} &  SL_{n}(\mathbb{Z}/p) \ar@{->}[r] & 1,}$$
only splits for $(p,n)=(3,2)\text{ and }(2,3).$
\end{teo}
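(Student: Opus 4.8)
The plan is to analyze the reduction homomorphism $r_p$ directly and to convert the splitting question into an obstruction that can be read off from the orders of lifts of unipotent elements. First I would record the module structure. Writing an element of $\ker r_p$ as $I + pX$ with $X \in \mathfrak{gl}_n(\mathbb{Z}/p),$ the identity $\det(I + pX) \equiv 1 + p\,\mathrm{tr}(X) \pmod{p^2}$ shows that $\ker r_p$ consists exactly of the classes with $\mathrm{tr}(X) = 0,$ so it is canonically $\mathfrak{sl}_n(\mathbb{Z}/p),$ and conjugation by a lift of $g \in SL_n(\mathbb{Z}/p)$ sends $I + pX$ to $I + p\,gXg^{-1},$ i.e. the action is the adjoint one. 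Thus the extension is classified by a class in $H^2(SL_n(\mathbb{Z}/p); \mathfrak{sl}_n(\mathbb{Z}/p)),$ and it splits if and only if this class vanishes, equivalently if and only if $SL_n(\mathbb{Z}/p)$ lifts to a complement inside $SL_n(\mathbb{Z}/p^2).$

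The core of the argument, covering every $p \geq 5$ uniformly, is an order obstruction. A section would send the transvection $t = I + e_{12},$ which has order $p$ in $SL_n(\mathbb{Z}/p),$ to an element of the same order $p$ in $SL_n(\mathbb{Z}/p^2);$ so I would show that for $p \geq 5$ \emph{no} preimage of $t$ has order $p.$ Writing such a preimage as $I + N$ with $N = e_{12} + pX,$ the relation $e_{12}^2 = 0$ gives $N^2 \equiv 0 \pmod p,$ hence $N^j \equiv 0 \pmod p$ for all $j \geq 2.$ Expanding $(I+N)^p = \sum_{j=0}^p \binom{p}{j} N^j$ modulo $p^2:$ the term $j=1$ contributes $pN \equiv p\,e_{12};$ for $2 \leq j \leq p-1$ the binomial $\binom{p}{j}$ is divisible by $p$ and $N^j$ is divisible by $p,$ so these terms vanish modulo $p^2;$ and $N^p = N\,(N^2)^{(p-1)/2}$ is divisible by $p^{(p-1)/2},$ hence by $p^2$ as soon as $p \geq 5.$ Therefore $(I+N)^p \equiv I + p\,e_{12} \neq I,$ so every lift of $t$ has order $p^2$ and no section can exist. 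Restricting to a single transvection block makes this work for all $n \geq 2$ at once, so the extension is non-split whenever $p \geq 5.$

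It then remains to treat the small primes $p \in \{2,3\},$ and this is where the genuine difficulty and the sporadic exceptions appear. Here the order obstruction degenerates: the same computation gives $(I+N)^3 \equiv I + 3\,e_{12} + 3\,e_{12}Xe_{12} \pmod 9$ (and analogously for $p=2$), and one can choose $X$ so that this equals $I,$ so transvections \emph{do} admit order-$p$ lifts and a finer invariant is required. For the two exceptional pairs $(p,n) = (3,2)$ and $(2,3)$ the plan is to exhibit an explicit splitting, by lifting a generating set of $SL_2(\mathbb{Z}/3)$ (the binary tetrahedral group) to $SL_2(\mathbb{Z}/9),$ respectively of $SL_3(\mathbb{Z}/2)$ to $SL_3(\mathbb{Z}/4),$ and verifying that a full set of defining relations is preserved; equivalently one checks that the relevant class in $H^2$ vanishes in these two cases.

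For the remaining small pairs $(3,n)$ with $n \geq 3$ and $(2,n)$ with $n \neq 3,$ I would obstruct splitting through the Steinberg commutator relations rather than through element orders: a hypothetical section must carry the root elements $x_{ij}(1) = I + e_{ij}$ to compatible order-$p$ lifts respecting every relation of the form $[x_{ij}(a), x_{jk}(b)] = x_{ik}(ab),$ and the plan is to show that no choice of order-$p$ lifts satisfies all of these simultaneously, which is exactly a non-vanishing statement for the class in $H^2(SL_n(\mathbb{Z}/p); \mathfrak{sl}_n(\mathbb{Z}/p)).$ The main obstacle is precisely this small-prime analysis: once the clean order argument collapses, separating the two genuine splittings from the non-split small cases demands either explicit matrix computations with the complete relation set or the known and delicate values of $H^2$ of $SL_n$ over $\mathbb{Z}/p$ with adjoint coefficients.
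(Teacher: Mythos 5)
This statement is quoted in the paper from \cite{chih} (Theorem 7) and is not proved there, so the only fair comparison is between your proposal and a complete proof; on that standard your proposal has a genuine gap. The part you actually carry out --- the order obstruction for $p\geq 5$ --- is correct and complete: the identification of $\ker r_p$ with $\mathfrak{sl}_n(\mathbb{Z}/p)$ under the adjoint action is right, and the computation $(I+N)^p\equiv I+p\,e_{12}\pmod{p^2}$ for $N=e_{12}+pX$ (using $p\mid\binom{p}{j}$ and $p\mid N^j$ for $2\leq j\leq p-1$, and $p^{(p-1)/2}\mid N^p$) is a clean, uniform and genuinely elementary argument that no section can exist when $p\geq 5$, for every $n\geq 2$.

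The gap is that everything at $p\in\{2,3\}$ --- which is where the entire content of the theorem lives, since these primes contain both of the exceptional splittings and infinitely many non-split cases --- is left as a plan rather than executed. You do not exhibit the lifts of generators of $SL_2(\mathbb{Z}/3)$ to $SL_2(\mathbb{Z}/9)$ or of $SL_3(\mathbb{Z}/2)$ to $SL_3(\mathbb{Z}/4)$, nor verify any relations, so the two positive cases are unproved; and for $(3,n)$ with $n\geq 3$ and $(2,n)$ with $n\neq 3$ you only assert that a Steinberg-relation analysis \emph{would} show non-splitting, without carrying out any of it. As you concede yourself, this is exactly the hard part, and it cannot be waved at: it requires either the explicit relation-chasing or the known computation of $H^2(SL_n(\mathbb{Z}/p);\mathfrak{sl}_n(\mathbb{Z}/p))$ at $p=2,3$. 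There is also a small factual error in your transition paragraph: for $(p,n)=(2,2)$ the order obstruction does \emph{not} degenerate. Writing a lift of $I+e_{12}$ as $I+e_{12}+2X$ with $X=\left(\begin{smallmatrix} a & b\\ c & d\end{smallmatrix}\right)$, one finds $(I+e_{12}+2X)^2\equiv I+2\left(\begin{smallmatrix} c & 1+a+d\\ 0 & c\end{smallmatrix}\right)\pmod 4$, while the determinant condition forces $a+d\equiv c\pmod 2$; killing the square forces $c\equiv 0$ and $a+d\equiv 1$, a contradiction, so no order-$2$ lift exists and $(2,2)$ is in fact disposed of by your own method. Incorporating that would shrink the unproved residue to $(3,n)_{n\geq 2}$ and $(2,n)_{n\geq 3}$, but those cases still need an actual argument before this can be called a proof.
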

\begin{cor}
\label{cor_put_GL_split}
Let $p$ be a prime number and $n\geq 2$ an integer. The extension
$$\xymatrix@C=7mm@R=10mm{ 0 \ar@{->}[r] & \mathfrak{gl}_{n}(\mathbb{Z}/p) \ar@{->}[r] &  GL_{n}(\mathbb{Z}/p^2) \ar@{->}[r]^{r_p} &  GL_{n}(\mathbb{Z}/p) \ar@{->}[r] & 1,}$$
only splits for $(p,n)=(3,2),\; (2,2) \text{ and } (2,3).$
\end{cor}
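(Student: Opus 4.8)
The plan is to compare the $GL$-extension with the $SL$-extension of Theorem~\ref{teo_SL_split} by exploiting the determinant. First I would record the basic structure: the kernel of $r_p\colon GL_n(\mathbb{Z}/p^2)\to GL_n(\mathbb{Z}/p)$ consists of the matrices $I+pA$ with $A$ a matrix over $\mathbb{Z}/p$, the assignment $I+pA\mapsto A$ identifying it with $\mathfrak{gl}_{n}(\mathbb{Z}/p)$ as a module under the adjoint (conjugation) action of $GL_n(\mathbb{Z}/p)$; likewise $\ker r_p$ inside $SL_n$ is $\mathfrak{sl}_n(\mathbb{Z}/p)$, because $\det(I+pA)=1+p\,\mathrm{tr}(A)$ in $\mathbb{Z}/p^2$. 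Applying $\det$ to the whole extension yields a morphism onto the determinant extension $0\to \mathbb{Z}/p\to (\mathbb{Z}/p^2)^{*}\to(\mathbb{Z}/p)^{*}\to 1$, inducing $\mathrm{tr}$ on kernels; since $|(\mathbb{Z}/p^2)^{*}|=p(p-1)$ with $\gcd(p,p-1)=1$, this determinant extension splits, a fact I will use repeatedly. Write $\alpha\in H^2(GL_n(\mathbb{Z}/p);\mathfrak{gl}_n(\mathbb{Z}/p))$ and $\beta\in H^2(SL_n(\mathbb{Z}/p);\mathfrak{sl}_n(\mathbb{Z}/p))$ for the two extension classes; Theorem~\ref{teo_SL_split} says $\beta=0$ exactly for $(p,n)=(3,2),(2,3)$.

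Next I treat the case $p\nmid n$, which covers $(3,2)$ and $(2,3)$. Here $\mathrm{tr}(I)=n$ is invertible, so $\mathfrak{gl}_n=\mathfrak{sl}_n\oplus\mathfrak{z}$ as $GL_n(\mathbb{Z}/p)$-modules, with $\mathfrak{z}=\mathbb{Z}/p\cdot I$ the trivial module and projection $\tfrac1n\mathrm{tr}$ onto $\mathfrak{z}$. Consequently $\alpha=0$ iff both $\mathrm{tr}_{*}\alpha=0$ and $(\pi_{\mathfrak{sl}})_{*}\alpha=0$. The morphism to the determinant extension shows $\mathrm{tr}_{*}\alpha=\det^{*}(0)=0$, so $\alpha=0\iff(\pi_{\mathfrak{sl}})_{*}\alpha=0$. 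For the $\mathfrak{sl}$-part I would restrict to $SL_n(\mathbb{Z}/p)$: the inclusion of extensions gives $\mathrm{res}_{SL}\big((\pi_{\mathfrak{sl}})_{*}\alpha\big)=\beta$, and since $[GL_n:SL_n]=p-1$ is prime to $p$ the transfer yields $\mathrm{cor}\circ\mathrm{res}=(p-1)\cdot\mathrm{id}$, an isomorphism on the $\mathbb{Z}/p$-module $H^2(GL_n(\mathbb{Z}/p);\mathfrak{sl}_n)$, whence $\mathrm{res}_{SL}$ is injective. Therefore $(\pi_{\mathfrak{sl}})_{*}\alpha=0\iff\beta=0$, so for $p\nmid n$ the $GL$-extension splits iff the $SL$-extension does, i.e. exactly at $(3,2)$ and $(2,3)$.

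It remains to deal with $p\mid n$, where $I\in\mathfrak{sl}_n$ and the splitting $\mathfrak{gl}_n=\mathfrak{sl}_n\oplus\mathfrak{z}$ is unavailable. For necessity I would argue directly: a section $s$ of the $GL$-extension restricts to $SL_n(\mathbb{Z}/p)$, and $\det\circ s$ takes values in $\ker\big((\mathbb{Z}/p^2)^{*}\to(\mathbb{Z}/p)^{*}\big)\cong\mathbb{Z}/p$, an abelian group; when $SL_n(\mathbb{Z}/p)$ is perfect this composite is trivial, so $s$ maps $SL_n(\mathbb{Z}/p)$ into $SL_n(\mathbb{Z}/p^2)$ and splits the $SL$-extension, forcing $(p,n)\in\{(3,2),(2,3)\}$, impossible under $p\mid n$. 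Since $SL_n(\mathbb{Z}/p)$ fails to be perfect only for $(p,n)=(2,2),(3,2)$, and $(3,2)$ has $p\nmid n$, the only case with $p\mid n$ left open is $(2,2)$. For sufficiency at $(2,2)$ I would exhibit a section explicitly: the standard integral representation $S_3\hookrightarrow GL_2(\mathbb{Z})$, generated by $\left(\begin{smallmatrix}0&-1\\1&-1\end{smallmatrix}\right)$ and $\left(\begin{smallmatrix}0&1\\1&0\end{smallmatrix}\right)$, reduces mod $2$ to an isomorphism onto $GL_2(\mathbb{Z}/2)\cong S_3$; composing its reduction mod $4$ with the inverse of its reduction mod $2$ produces a homomorphism $GL_2(\mathbb{Z}/2)\to GL_2(\mathbb{Z}/4)$ splitting $r_2$. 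Assembling the three cases gives the stated answer $(p,n)=(3,2),(2,2),(2,3)$.

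The main obstacle is precisely the case $p\mid n$: the clean cohomological reduction to the $SL$-extension breaks down because $\mathfrak{gl}_n$ no longer splits off $\mathfrak{sl}_n$ as a module, so one must both invoke perfectness of $SL_n(\mathbb{Z}/p)$ for necessity and produce the $(2,2)$ section by hand; verifying that the mod-$4$ reduction of the integral $S_3$ really is a section (in particular injective and compatible with $r_2$) is the delicate point.
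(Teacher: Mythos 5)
Your argument is correct, but it follows a genuinely different route from the paper's. For non-splitting the paper compares classes via the intermediate group $SL^{(p)}_{n}(\mathbb{Z}/p^2)$ and shows that $i_*\colon H^2(SL_{n}(\mathbb{Z}/p);\mathfrak{sl}_{n}(\mathbb{Z}/p))\to H^2(SL_{n}(\mathbb{Z}/p);\mathfrak{gl}_{n}(\mathbb{Z}/p))$ is injective by computing $H^1(SL_{n}(\mathbb{Z}/p);\mathbb{Z}/p)=0$ (outside $(2,2),(3,2)$) from a presentation of $SL_n(\mathbb{Z}/p)$, whereas you work directly over $GL_n(\mathbb{Z}/p)$: when $p\nmid n$ you split the coefficient module as $\mathfrak{sl}_n\oplus\mathfrak{z}$, kill the central component with the (split) determinant extension, and use the transfer to see that restriction to $SL_n$ is injective, getting an \emph{if and only if} reduction to Theorem~\ref{teo_SL_split} in one stroke; when $p\mid n$ you replace cohomology altogether by the observation that perfectness of $SL_n(\mathbb{Z}/p)$ forces any section to restrict to a section of the $SL$-extension. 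For the splitting cases the paper cites $H^2(GL_2(\mathbb{Z}/p);\mathfrak{gl}_2(\mathbb{Z}/p))=0$ for $p=2,3$ and, for $(2,3)$, pushes the split $SL$-extension out and identifies $SL^{(2)}_3(\mathbb{Z}/4)$ with $GL_3(\mathbb{Z}/4)$; you get $(3,2)$ and $(2,3)$ for free from your $p\nmid n$ equivalence and handle $(2,2)$ with the explicit integral copy of $S_3$ in $GL_2(\mathbb{Z})$, which does reduce isomorphically mod $2$ (the six reductions are distinct), so the composite with reduction mod $4$ is indeed a section. The net trade-off: your proof is more self-contained (no appeal to the $H^2(GL_2;\mathfrak{gl}_2)$ computation or to a presentation of $SL_n(\mathbb{Z}/p)$) but relies on the standard fact that $SL_n(\mathbb{Z}/p)$ is perfect except for $(2,2)$ and $(3,2)$ and on an explicit matrix verification, while the paper's is shorter where it can lean on citations.
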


\begin{proof}
We first prove that the extension
\begin{equation}
\label{ext_GL_split}
\xymatrix@C=7mm@R=10mm{ 0 \ar@{->}[r] & \mathfrak{gl}_{n}(\mathbb{Z}/p) \ar@{->}[r] &  GL_{n}(\mathbb{Z}/p^2) \ar@{->}[r]^{r_p} &  GL_{n}(\mathbb{Z}/p) \ar@{->}[r] & 1}
\end{equation}
does not split for $(p,n)\neq(3,2),\;(2,2) \text{ and } (2,3).$
Set
$$SL_{n}^{(p)}(\mathbb{Z}/p^2)=\{ A\in GL_{n}(\mathbb{Z}/p^2) \mid det(A)\equiv 1 \;(\text{mod }p) \}.$$
By construction we have a commutative diagram
\begin{equation}
\xymatrix@C=7mm@R=10mm{ 0 \ar@{->}[r] & \mathfrak{sl}_{n}(\mathbb{Z}/p) \ar@{^{(}->}[d]^-{i} \ar@{->}[r] &  SL_{n}(\mathbb{Z}/p^2)\ar@{^{(}->}[d] \ar@{->}[r] &  SL_{n}(\mathbb{Z}/p) \ar@{=}[d] \ar@{->}[r] & 1 \\
0 \ar@{->}[r] & \mathfrak{gl}_{n}(\mathbb{Z}/p) \ar@{=}[d] \ar@{->}[r] &  SL^{(p)}_{n}(\mathbb{Z}/p^2) \ar@{^{(}->}[d] \ar@{->}[r] &  SL_{n}(\mathbb{Z}/p) \ar@{^{(}->}[d]^-{\phi}  \ar@{->}[r] & 1 \\
0 \ar@{->}[r] & \mathfrak{gl}_{n}(\mathbb{Z}/p) \ar@{->}[r] &  GL_{n}(\mathbb{Z}/p^2) \ar@{->}[r] &  GL_{n}(\mathbb{Z}/p) \ar@{->}[r] & 1.}
\end{equation}
Let $g_n$ and $s_n$ be the associated cohomology class of the bottom and top extensions of this diagram, respectively. At the level of cohomology groups we know that
$$\phi^*(g_n)=i_*(s_n)\in H^2(SL_{n}(\mathbb{Z}/p);\mathfrak{gl}_{n}(\mathbb{Z}/p)).$$
By Theorem \ref{teo_SL_split}, $s_n\neq 0$ for $(p,n)\neq(3,2),(2,3).$ Then it is enough to show that the map
$$i_*: H^2(SL_{n}(\mathbb{Z}/p);\mathfrak{sl}_{n}(\mathbb{Z}/p))\longrightarrow H^2(SL_{n}(\mathbb{Z}/p);\mathfrak{gl}_{n}(\mathbb{Z}/p))$$
is injective for $(p,n)\neq(3,2),\;(2,2) \text{ and } (2,3).$

Consider the short exact sequence
$$\xymatrix@C=7mm@R=10mm{ 0 \ar@{->}[r] & \mathfrak{sl}_{n}(\mathbb{Z}/p) \ar@{->}[r] &  \mathfrak{gl}_{n}(\mathbb{Z}/p) \ar@{->}[r]^-{tr} & \mathbb{Z}/p \ar@{->}[r] & 1,}$$
where $tr$ is given by the matrix trace.
The long cohomology sequence for $SL_{n}(\mathbb{Z}/p)$ with values in above short exact sequence, give us an exact sequence
$$\xymatrix@C=7mm@R=10mm{ H^1(SL_{n}(\mathbb{Z}/p);\mathbb{Z}/p) \ar@{->}[r] &  H^2(SL_{n}(\mathbb{Z}/p);\mathfrak{sl}_{n}(\mathbb{Z}/p)) \ar@{->}[r]^-{i_*} & H^2(SL_{n}(\mathbb{Z}/p);\mathfrak{gl}_{n}(\mathbb{Z}/p)).}$$
Taking the presentation of $SL_{n}(\mathbb{Z}/p)$ given in \cite{green}, one can see that
$H^1(SL_{n}(\mathbb{Z}/p);\mathbb{Z}/p)=Hom(SL_{n}(\mathbb{Z}/p),\mathbb{Z}/p)=0$ except for $(p,n)=(2,2), (3,2),$ in which cases it is $\mathbb{Z}/p.$
As a consequence, $i_*$ is injective for $(p,n)\neq (2,2), (3,2).$

Next we prove that the extension \eqref{ext_GL_split} splits for $(p,n)=(3,2),\; (2,2) \text{ and } (2,3).$
By Proposition 4.5 in \cite{chih2}, we know that $H^2(GL_2(\mathbb{Z}/p);\mathfrak{gl}_2(\mathbb{Z}/p))=0$ for $p=2,3.$ Therefore the extension \eqref{ext_GL_split} splits for $(p,n)=(3,2) \text{ and } (2,2).$

For the case $(p,n)=(2,3),$ consider the push-out diagram
\begin{equation}
\label{diag_push-out_M_(2,3)}
\xymatrix@C=7mm@R=10mm{ 0 \ar@{->}[r] & \mathfrak{sl}_{3}(\mathbb{Z}/2) \ar@{^{(}->}[d]^{i} \ar@{->}[r] &  SL_{3}(\mathbb{Z}/4)\ar@{^{(}->}[d] \ar@{->}[r]^{r_2} &  SL_{3}(\mathbb{Z}/2) \ar@{=}[d] \ar@{->}[r] & 1 \\
0 \ar@{->}[r] & \mathfrak{gl}_{3}(\mathbb{Z}/2) \ar@{->}[r] &  SL^{(2)}_{3}(\mathbb{Z}/4) \ar@{->}[r]^{r_2} &  SL_{3}(\mathbb{Z}/2) \ar@{->}[r] & 1.}
\end{equation}
By Theorem \eqref{teo_SL_split} we know that the top extension of this commutative diagram splits. Then by commutative diagram \eqref{diag_push-out_M_(2,3)}, the bottom extension in diagram \eqref{diag_push-out_M_(2,3)} splits too.
Notice that $SL^{(2)}_{3}(\mathbb{Z}/4)=GL_3(\mathbb{Z}/4)$ and $SL_3(\mathbb{Z}/2)=GL_3(\mathbb{Z}/2).$ Hence, the extension \eqref{ext_GL_split} splits for $(p,n)=(2,3).$
\end{proof}

\begin{prop}
Let $p$ be a prime number. The extension
\begin{equation}
\label{ext_split_1}
\xymatrix@C=7mm@R=10mm{0 \ar@{->}[r] & Hom(\mathcal{N}^\bullet_1,\mathcal{L}^\bullet_{2}) \ar@{->}[r]^-{i} & Aut\;\mathcal{N}^\bullet_{2} \ar@{->}[r]^-{\psi^\bullet_1} & Aut \;\mathcal{N}^\bullet_1 \ar@{->}[r] & 1, }
\end{equation}
splits if and only if $p$ is odd if $\bullet=Z$  or $(p,n)= (3,2), (2,2)$ if $\bullet=S$.
\end{prop}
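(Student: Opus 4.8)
The plan is to translate the splitting question into the vanishing of a single cohomology class and then read it off from Corollary \ref{cor_put_GL_split} together with the Center Kills Lemma. Since $\Gamma^\bullet_2=[\Gamma,\Gamma]\Gamma^p$, we have $\mathcal N^\bullet_1=V:=(\mathbb Z/p)^n$ and $Aut\,\mathcal N^\bullet_1=GL_n(\mathbb Z/p)=GL(V)$. The kernel $Hom(\mathcal N^\bullet_1,\mathcal L^\bullet_2)=Hom(V,\mathcal L^\bullet_2)=V^{*}\otimes\mathcal L^\bullet_2$ is abelian, and by Proposition \ref{prop_act_aut_bullet} the conjugation action of the quotient on it is the natural $GL(V)$-action; hence extension \eqref{ext_split_1} is classified by a class $\alpha\in H^2\big(GL(V);Hom(V,\mathcal L^\bullet_2)\big)$, and it splits iff $\alpha=0$. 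Computing $\mathcal L^\bullet_2=\Gamma^\bullet_2/\Gamma^\bullet_3$ as a $GL(V)$-module one finds $\mathcal L^Z_2\cong\Lambda^2V$ for $p$ odd, $\mathcal L^S_2\cong V\oplus\Lambda^2V$ for $p$ odd (the $p$-th power map is additive modulo $\Gamma^S_3$ when $p$ is odd), while for $p=2$ the two series coincide and $\mathcal L_2$ is an extension $0\to\Lambda^2V\to\mathcal L_2\to V\to0$ coming from the squaring map.

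The key bridge to Corollary \ref{cor_put_GL_split} is the abelianization functor. One checks $H_1(\mathcal N^\bullet_2)\cong(\mathbb Z/p^2)^n$ in every case except Zassenhaus with $p$ odd (where it is $(\mathbb Z/p)^n$), so $\Phi\mapsto H_1(\Phi)$ defines a homomorphism $Aut\,\mathcal N^\bullet_2\to GL_n(\mathbb Z/p^2)$ which, together with the identity on $GL_n(\mathbb Z/p)$, gives a morphism from \eqref{ext_split_1} to the extension $0\to\mathfrak{gl}_n(\mathbb Z/p)\to GL_n(\mathbb Z/p^2)\to GL_n(\mathbb Z/p)\to1$ of Corollary \ref{cor_put_GL_split}. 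On kernels this morphism is the projection $Hom(V,\mathcal L^\bullet_2)\twoheadrightarrow Hom(V,V)=\mathfrak{gl}_n(\mathbb Z/p)$ onto the Frobenius part (an element $f$ acts on $H_1$ by $1+pf$). By naturality of the classifying class, the $\mathfrak{gl}_n$-component of $\alpha$ is exactly the class $g_n$ of Corollary \ref{cor_put_GL_split}.

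For $p$ odd I would finish with the Center Kills Lemma: the central scalar $-1\in GL(V)$ acts on $Hom(V,\Lambda^2V)=V^{*}\otimes\Lambda^2V$ by $(-1)^{-1}(-1)^2=-1\neq1$, so $H^{*}\big(GL(V);Hom(V,\Lambda^2V)\big)=0$. In the Zassenhaus case the whole coefficient module is $Hom(V,\Lambda^2V)$, whence $\alpha=0$ and the extension splits for all $n$; in the Stallings case $\alpha=(g_n,0)$, so it splits iff $g_n=0$, which by Corollary \ref{cor_put_GL_split} happens (among odd $p$) exactly for $(p,n)=(3,2)$. Note that on $\mathfrak{gl}_n=V^{*}\otimes V$ scalars act trivially, so Center Kills says nothing there, consistent with $g_n$ being the genuine obstruction.

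It remains to treat $p=2$, where Center Kills is unavailable. Here I would use the short exact sequence of coefficient modules
\begin{equation*}
0\longrightarrow Hom(V,\Lambda^2V)\longrightarrow Hom(V,\mathcal L_2)\longrightarrow\mathfrak{gl}_n(\mathbb Z/2)\longrightarrow0,
\end{equation*}
which is exact because $V$ is free over $\mathbb Z/2$, and its long exact sequence; the image of $\alpha$ in $H^2(GL_n(\mathbb Z/2);\mathfrak{gl}_n)$ is $g_n$. For $n\geq4$, $g_n\neq0$ by Corollary \ref{cor_put_GL_split}, so $\alpha\neq0$ and the extension does not split. For $n=2$ one has $g_2=0$ and, using $GL_2(\mathbb Z/2)=\mathfrak S_3$, both $H^2(\mathfrak S_3;\mathfrak{gl}_2)=0$ (see \cite{chih2}) and $H^2(\mathfrak S_3;Hom(V,\Lambda^2V))=H^2(\mathfrak S_3;V)=0$ (the involutions act freely on $V$), so $\alpha=0$ and the extension splits. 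The delicate case, and the step I expect to be the main obstacle, is $(p,n)=(2,3)$: here $g_3=0$, so the abelianized $GL_3(\mathbb Z/4)\to GL_3(\mathbb Z/2)$ extension splits, yet the full extension must be shown \emph{not} to split. This forces a genuine computation of the surviving nilpotent class $\beta\in H^2\big(GL_3(\mathbb Z/2);Hom(V,\Lambda^2V)\big)$ from which $\alpha$ originates: one must show $\beta\neq0$ and control the connecting map (via $H^1(GL_3(\mathbb Z/2);\mathfrak{gl}_3)$) so that $\beta\neq0$ forces $\alpha\neq0$. Since Center Kills fails at $p=2$, I would carry this out by transferring to the Sylow $2$-subgroup (a copy of $D_8$) and exploiting the presentation of $SL_n(\mathbb Z/2)=GL_n(\mathbb Z/2)$ from \cite{green} used in Corollary \ref{cor_put_GL_split}, exactly the tools announced for this section.
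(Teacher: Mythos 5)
Your overall strategy coincides with the paper's: Center Kills disposes of the Zassenhaus case for odd $p$; the abelianization map $Aut\,\mathcal N^S_2\to Aut(\Gamma/\Gamma_2\Gamma^{p^2})\cong GL_n(\mathbb Z/p^2)$ is exactly the paper's push-out onto the extension of Corollary \ref{cor_put_GL_split}, which settles non-splitting whenever $g_n\neq 0$; and the exceptional split cases $(3,2),(2,2)$ follow from the vanishing of the relevant $H^2$ groups (the paper instead exhibits explicit sections over the Sylow subgroup $UT_2(\mathbb Z/p)$, but your cohomological route is sound, modulo one wrong justification: the involutions of $GL_2(\mathbb Z/2)$ are transvections and do \emph{not} act freely on $V$; the vanishing $H^2(\mathfrak S_3;V)=0$ is still true, but you should get it from $\widehat H^{2}(\mathbb Z/2;V)=V^{\sigma}/NV=\langle e_1\rangle/\langle e_1\rangle=0$ for the Sylow $2$-subgroup generated by a transvection $\sigma$).

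The genuine gap is the case $\bullet=S$, $(p,n)=(2,3)$, which you correctly identify as the crux but do not carry out. This is not a routine leftover: since $g_3=0$ for $(2,3)$, the entire comparison with $GL_3(\mathbb Z/4)$ gives no information, and one must show that the class $\alpha$, which lifts to some $\beta\in H^2\bigl(GL_3(\mathbb Z/2);Hom(V,\Lambda^2V)\bigr)$, is not killed by the connecting map from $H^1(GL_3(\mathbb Z/2);\mathfrak{gl}_3)$ — and this $H^1$ is nonzero precisely in the troublesome small cases, so the argument cannot be purely formal. The paper spends roughly half of its proof here: it restricts to the Sylow subgroup $UT_3(\mathbb Z/2)$ (legitimized by the transfer Theorem \ref{teo_trans}), applies the seven-term exact sequence of the central extension $0\to\langle T_{13}\rangle\to UT_3(\mathbb Z/2)\to(\mathbb Z/2)^2\to 1$, checks that the class restricts to zero on $\langle T_{13}\rangle$ via an explicit lift $\widetilde{T_{13}}$ of order $2$, and then shows its image under $\rho$ is nonzero by an explicit computation with $\widetilde{T_{12}}$ reducing to the non-solvability of $x_1^2=b\,T_{13}(b)$ in $\mathcal L^S_2$. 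Without an argument of this kind your proof establishes the "if" direction and the "only if" direction for all $(p,n)$ with $g_n\neq 0$, but not the non-splitting at $(2,3)$, so the stated equivalence remains unproved.
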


\begin{rem}
Notice that for $p=2,$ the extensions \eqref{ext_split_1} with $\bullet=Z,$ $\bullet=S$ coincide, because in this case,
$\Gamma_i^S=\Gamma_i^Z$ for $i=1,2,3.$
Since $\Gamma_3^Z=\Gamma_3\Gamma_2^2\Gamma^4 \subset [\Gamma,\Gamma^2]\Gamma^4=\Gamma^S_3,$ and we already know that $\Gamma^S_i\subset \Gamma^Z_i.$
\end{rem}

\begin{proof}
\textbf{Case $\bullet=Z$ and $p$ an odd prime.} In this case, we have that
$$\mathcal{L}^Z_{2}\cong\extp^2 \mathcal{N}^Z_1=\extp^2 H_p, \qquad Aut\; \mathcal{N}^Z_1 =Aut\; H_p\cong GL_{n}(\mathbb{Z}/p).$$
Then, the extension \eqref{ext_split_1} becomes
\begin{equation*}
\xymatrix@C=7mm@R=10mm{0 \ar@{->}[r] & Hom(H_p,\extp^2 H_p) \ar@{->}[r]^-{i} & Aut\;\mathcal{N}^Z_{2} \ar@{->}[r]^-{\psi^Z_1} & GL_{n}(\mathbb{Z}/p) \ar@{->}[r] & 1. }
\end{equation*}
Notice that $-Id$ is an element of the center of $GL_{n}(\mathbb{Z}/p),$ which acts on $Hom(H_p,\extp^2 H_p)$ by the multiplication of $-1.$
Then, by Center Kills Lemma,
$$H^2(GL_{n}(\mathbb{Z}/p);Hom(H_p,\extp^2 H_p))=0.$$
Therefore the extension \eqref{ext_split_1} splits.

\textbf{Case $\bullet=S$ and $p$ prime with $(p,n)\neq (3,2), (2,2), (2,3).$} 
In this case, we have that
$$ Aut\; \mathcal{N}^S_1 =Aut\; H_p\cong GL_{n}(\mathbb{Z}/p).$$
Then, the extension \eqref{ext_split_1} becomes
\begin{equation}
\label{ses_nosplit_S}
\xymatrix@C=7mm@R=10mm{0 \ar@{->}[r] & Hom(H_p,\mathcal{L}^S_{2}) \ar@{->}[r]^-{i} & Aut\;\mathcal{N}^S_{2} \ar@{->}[r]^-{\psi^S_1} & GL_{n}(\mathbb{Z}/p) \ar@{->}[r]  & 1.}
\end{equation}
Set
$\overline{\mathcal{N}^S_{2}}=\Gamma /\Gamma_2 \Gamma^{p^2}, \; \overline{\mathcal{L}^S_{2}}=\Gamma^p/\Gamma_2 \Gamma^{p^2}.$ Notice that $\Gamma_2/\Gamma_3^S$ is a characteristic subgroup of $\mathcal{N}_2^S.$ Then there is a well defined homomorphism $\widetilde{q}:Aut\;\mathcal{N}^S_{2} \rightarrow Aut\;\overline{\mathcal{N}^S_{2}}$ and there is a push-out diagram
\begin{equation}
\label{com_diag_S2}
\xymatrix@C=7mm@R=10mm{0 \ar@{->}[r] & Hom(H_p,\mathcal{L}^S_{2})\ar@{->}[d]^q \ar@{->}[r]^-{i} & Aut\;\mathcal{N}^S_{2} \ar@{->}[d]^{\widetilde{q}} \ar@{->}[r]^-{\psi^S_1} & GL_{n}(\mathbb{Z}/p) \ar@{->}[r] \ar@{=}[d] & 1 \\
0 \ar@{->}[r] & Hom(H_p,\overline{\mathcal{L}^S_{2}}) \ar@{->}[r]^-{i} & Aut\;\overline{\mathcal{N}^S_{2}} \ar@{->}[r]^-{\overline{\psi^S_1}} & GL_{n}(\mathbb{Z}/p) \ar@{->}[r] & 1, }
\end{equation}
where $q,$ $\widetilde{q}$ are induced by the quotient map respect to $\Gamma_2/\Gamma_3^S.$
Notice that $$Hom(H_p,\overline{\mathcal{L}^S_{2}})\cong \mathfrak{gl}_{n}(\mathbb{Z}/p), \qquad Aut\;\overline{\mathcal{N}^S_{2}}\cong GL_{n}(\mathbb{Z}/p^2).$$
Thus, the bottom row of diagram \eqref{com_diag_S2} becomes
\begin{equation}
\label{ses_GL_modp}
\xymatrix@C=7mm@R=10mm{ 0 \ar@{->}[r] & \mathfrak{gl}_{n}(\mathbb{Z}/p) \ar@{->}[r] &  GL_{n}(\mathbb{Z}/p^2) \ar@{->}[r]^-{r_p} &  GL_{n}(\mathbb{Z}/p) \ar@{->}[r] & 1.}
\end{equation}
Then $q_*:H^2(GL_{n}(\mathbb{Z}/p);Hom(H_p,\mathcal{L}^S_2))\rightarrow H^2(GL_{n}(\mathbb{Z}/p);\mathfrak{gl}_{n}(\mathbb{Z}/p))$ sends the cohomology class of the extension \eqref{ses_nosplit_S} to the cohomology class of the short exact sequence \eqref{ses_GL_modp}.
Since, by Corollary \eqref{cor_put_GL_split}, the extension \eqref{ses_GL_modp} does not split for $(p,n)\neq (3,2), (2,2), (3,2),$ we have that the extension \eqref{ses_nosplit_S} does not split for $(p,n)\neq (3,2),$ $(2,2), (2,3).$

\textbf{Case $\bullet=S$ with $(p,n)=(3,2),(2,2),(2,3).$} In this case we have an extension
\begin{equation}
\label{ses_split_S_exotic}
\xymatrix@C=7mm@R=10mm{0 \ar@{->}[r] & Hom(H_p,\mathcal{L}^S_{2}) \ar@{->}[r]^-{i} & Aut\;\mathcal{N}^S_{2} \ar@{->}[r]^-{\psi^S_1} & GL_{n}(\mathbb{Z}/p) \ar@{->}[r]  & 1.}
\end{equation}
To deal with these particular cases we use the following result:
\begin{teo}[Theorem III.10.3 in \cite{brown}]
\label{teo_trans}
Let $G$ be a finite group and $H$ a $p$-Sylow subgroup. For any $G$-module $M$ and any $n>0,$ the pull-back of the inclusion map $H\hookrightarrow G$ maps the $p$-primary component of $H^n(G,M)$ isomorphically onto the set of $G$-invariant elements of $H^n(H;M).$
\end{teo}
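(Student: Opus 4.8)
The plan is to prove the theorem by the classical restriction--corestriction (transfer) argument, which is the standard route to the Cartan--Eilenberg stable-element theorem. Write $\mathrm{res}\colon H^n(G;M)\to H^n(H;M)$ for the pull-back along $H\hookrightarrow G$ and $\mathrm{cor}\colon H^n(H;M)\to H^n(G;M)$ for the associated transfer (corestriction), which exists because $[G:H]<\infty$. The single algebraic fact driving everything is the identity $\mathrm{cor}\circ\mathrm{res}=[G:H]\cdot\mathrm{id}$ on $H^n(G;M)$. Since $H$ is a $p$-Sylow subgroup, the index $[G:H]$ is prime to $p$ and therefore acts invertibly on the $p$-primary component $H^n(G;M)_{(p)}$.

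First I would deduce injectivity. On $H^n(G;M)_{(p)}$ the map $[G:H]^{-1}\mathrm{cor}$ is a well-defined left inverse of $\mathrm{res}$, so $\mathrm{res}$ is injective on the $p$-primary component; this already realizes $H^n(G;M)_{(p)}$ as a direct summand of $H^n(H;M)$. It remains to pin down the image. By naturality of cohomology under the conjugation maps $c_g\colon H^g\cap H\to H\cap {}^gH$, any class of the form $\mathrm{res}(u)$ satisfies the compatibility conditions defining a \emph{stable} element; when $H$ is normal these conditions collapse to genuine invariance under the $G/H$-action, i.e. to being a $G$-invariant element of $H^n(H;M)$. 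Thus $\mathrm{res}\bigl(H^n(G;M)_{(p)}\bigr)$ is contained in the invariant (stable) elements.

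The reverse inclusion is the heart of the matter. Here I would invoke the double-coset (Mackey) formula $\mathrm{res}^G_H\circ\mathrm{cor}^G_H=\sum_{HgH}\mathrm{cor}^H_{H\cap {}^gH}\circ c_g\circ\mathrm{res}^H_{H^g\cap H}$, the sum running over representatives of the double cosets $H\backslash G/H$. Evaluating this on a stable (resp.\ $G$-invariant) element $x$, stability lets one rewrite each summand as a restriction of $x$ followed by the corresponding corestriction, and the telescoping of restriction with corestriction over $H\cap {}^gH$ makes the whole sum collapse to $[G:H]\,x$. Consequently $y:=[G:H]^{-1}\mathrm{cor}(x)\in H^n(G;M)_{(p)}$ satisfies $\mathrm{res}(y)=x$, so every invariant element is attained. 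Combined with the injectivity above, $\mathrm{res}$ restricts to an isomorphism from the $p$-primary component of $H^n(G;M)$ onto the $G$-invariant elements of $H^n(H;M)$.

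The main obstacle is the careful establishment and evaluation of the double-coset formula: one must construct $\mathrm{res}$ and $\mathrm{cor}$ at the (co)chain level (or via (co)induced modules and Shapiro's lemma), verify the Mackey decomposition of $\mathrm{res}^G_H\circ\mathrm{cor}^G_H$ into the double-coset sum, and then do the bookkeeping showing that stability forces each term to contribute a copy of $x$, so that the total is exactly $[G:H]\,x$. A secondary point requiring care is the precise identification of the image: one must check that for the groups to which the theorem is applied the stable elements genuinely coincide with the $G$-invariant ones (automatic when $H$ is normal in $G$), so that the conclusion can be stated in the invariant-element form used above.
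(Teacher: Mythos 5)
This statement is quoted in the paper as Theorem III.10.3 of Brown's \emph{Cohomology of Groups} and is not proved there; your restriction--corestriction argument, with injectivity from $\mathrm{cor}\circ\mathrm{res}=[G:H]\cdot\mathrm{id}$ and surjectivity from the double-coset (Mackey) formula applied to a stable element, is exactly the proof given in the cited reference and is correct. As for the ``secondary point requiring care'' you raise at the end: in Brown's terminology a ``$G$-invariant element of $H^n(H;M)$'' is \emph{defined} by the stability condition you verify (no normality of $H$ is assumed or needed), so once $\mathrm{res}$ is shown to be an isomorphism onto the stable elements the theorem as stated is proved and there is no residual gap.
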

By \cite{weir} we know that the Upper triangular matrix group $UT_n(\mathbb{Z}/p)$ is a $p$-Sylow  subgroup of $GL_n(\mathbb{Z}/p).$

Consider $Aut^{\text{UT}}\mathcal{N}^S_{2}$ the preimage of $UT_n(\mathbb{Z}/p)$ by $\psi^S_1.$
We have a pull-back diagram
\begin{equation}
\xymatrix@C=7mm@R=10mm{0 \ar@{->}[r] & Hom(H_p,\mathcal{L}^S_{2}) \ar@{->}[r] \ar@{=}[d] & Aut^{\text{UT}}\mathcal{N}^S_{2} \ar@{->}[r]^-{\psi^S_1} \ar@{->}[d] & UT_n(\mathbb{Z}/p) \ar@{->}[r] \ar@{->}[d] & 1 \\
0 \ar@{->}[r] & Hom(H_p,\mathcal{L}^S_{2}) \ar@{->}[r] & Aut\;\mathcal{N}^S_{2} \ar@{->}[r]^-{\psi^S_1} & SL_n(\mathbb{Z}/p) \ar@{->}[r]  & 1.}
\end{equation}
By Theorem \ref{teo_trans} the top extension splits if and only if the bottom extension splits.

Next we show that the top extension of this pull-back diagram does not split for $(p,n)=(2,3),$ and splits for $(p,n)=(3,2),(2,2),$ giving an explicit section of $\psi^S_1: Aut^{\text{UT}}\mathcal{N}^S_{2} \rightarrow UT_n(\mathbb{Z}/p)$ for each case.

Consider $\Gamma$ the free group of rank $n$ generated by $\{ x_1,\ldots, x_n \}.$

\begin{itemize}
\item For $(p,n)=(2,2).$ Taking $T_{12}=\left(\begin{smallmatrix}
1 & 1 \\
0 & 1 
\end{smallmatrix}\right),$ the group $UT_2(\mathbb{Z}/2)$ has the following presentation:
$UT_2(\mathbb{Z}/2)=\langle T_{12} \mid T_{12}^2=1\rangle.$
Define a map $s: UT_2(\mathbb{Z}/2) \rightarrow Aut^{\text{UT}}\mathcal{N}^S_{2}$ as follows:
$$s(T_{12})=\left\{
\begin{aligned}
x_1 & \longmapsto x_1^{-1} \\
x_2 & \longmapsto x_2x_1
\end{aligned}
 \right.$$
A direct computation shows that $\psi_1^S(s(T_{12}))= T_{12}$ and $s(T_{12})^2=id.$ Therefore the map $s$ is a section of $\psi^S_1: Aut^{\text{UT}}\mathcal{N}^S_{2} \rightarrow UT_2(\mathbb{Z}/2).$

\item For $(p,n)=(3,2).$ Taking $T_{12}=\left(\begin{smallmatrix}
1 & 1 \\
0 & 1 
\end{smallmatrix}\right),$ the group $UT_2(\mathbb{Z}/3)$ has the following presentation:
$UT_2(\mathbb{Z}/3)=\langle T_{12} \mid T_{12}^3=1\rangle.$
Define a map $s: UT_2(\mathbb{Z}/3) \rightarrow Aut^{\text{UT}}\mathcal{N}^S_{2}$ as follows:
$$s(T_{12})=\left\{
\begin{aligned}
x_1 & \longmapsto x_1 x_2^6 \\
x_2 & \longmapsto x_2x_1
\end{aligned}
 \right.$$
Notice that in this case, $\Gamma^S_3=[\Gamma,\Gamma_2 \Gamma^3] (\Gamma_2 \Gamma^3)^3.$ A direct computation shows that $\psi_1^S(s(T_{12}))= T_{12}$ and $s(T_{12})^3=id.$
As a consequence, the map $s$ is a section of $\psi^S_1: Aut^{\text{UT}}\mathcal{N}^S_{2} \rightarrow UT_2(\mathbb{Z}/3).$

\item For $(p,n)=(2,3).$ Consider matrices the upper triangular matrices
$$T_{12}=\left(\begin{smallmatrix}
1 & 1 & 0 \\
0 & 1 & 0 \\
0 & 0 & 1
\end{smallmatrix}\right),\quad
T_{23}=\left(\begin{smallmatrix}
1 & 0 & 0 \\
0 & 1 & 1 \\
0 & 0 & 1
\end{smallmatrix}\right), \quad
T_{13}=\left(\begin{smallmatrix}
1 & 0 & 1 \\
0 & 1 & 0 \\
0 & 0 & 1
\end{smallmatrix}\right).$$
Then the group $UT_3(\mathbb{Z}/2)$ has the following presentation:
$$\langle T_{12}, T_{13}, T_{23} \mid T_{12}^2=T_{13}^2=T_{23}^2=[T_{12},T_{13}]=[T_{23},T_{13}]=1, [T_{12},T_{23}]=T_{13} \rangle.$$
Consider the $UT_3(\mathbb{Z}/2)$-module $M=Hom(H_2,\mathcal{L}^S_{2})$ and a central extension
$$\xymatrix@C=7mm@R=10mm{0 \ar@{->}[r] & \mathbb{Z}/2 \ar@{->}[r] & UT_3(\mathbb{Z}/2) \ar@{->}[r] & (\mathbb{Z}/2)^2 \ar@{->}[r]  & 1,}$$
where $\mathbb{Z}/2\cong \langle T_{13}\rangle,$ $(\mathbb{Z}/2)^2\cong UT_3(\mathbb{Z}/2)/\langle T_{13}\rangle.$ 
By \cite{seven} we have an associated seven term exact sequence
$$\xymatrix@C=5mm@R=5mm{0 \ar@{->}[r] & H^1((\mathbb{Z}/2)^2 ; M^{\mathbb{Z}/2}) \ar@{->}[r]^{\text{inf}} & H^1(UT_3(\mathbb{Z}/2);M) \ar@{->}[r]^{\text{res}} & H^1(\mathbb{Z}/2;M)^{(\mathbb{Z}/2)^2} \ar@{->}[r]  &  }$$
$$\xymatrix@C=5mm@R=5mm{ \ar@{->}[r] & H^2((\mathbb{Z}/2)^2; M^{\mathbb{Z}/2}) \ar@{->}[r]^-{\text{inf}} & H^2(UT_3(\mathbb{Z}/2);M)_1 \ar@{->}[r]^-{\rho} & H^1((\mathbb{Z}/2)^2; H^1(\mathbb{Z}/2;M)) \ar@{->}[r] & }$$
$$\xymatrix@C=5mm@R=5mm{ \ar@{->}[r]  & H^3((\mathbb{Z}/2)^2; M^{\mathbb{Z}/2})},$$
where $H^2(UT_3(\mathbb{Z}/2);M)_1$ is the kernel of the restriction map
$$res: H^2(UT_3(\mathbb{Z}/2);M)\longrightarrow H^2(\mathbb{Z}/2;M).$$

We show that the cohomology class associated to the extension
\begin{equation}
\label{ext_UT_(2,3)}
\xymatrix@C=5mm@R=5mm{
0 \ar@{->}[r] & Hom(H_2,\mathcal{L}^S_{2}) \ar@{->}[r] & Aut^{\text{UT}}\mathcal{N}^S_{2} \ar@{->}[r]^-{\psi^S_1} & UT_3(\mathbb{Z}/2) \ar@{->}[r] & 1 }
\end{equation}
belongs to $H^2(UT_3(\mathbb{Z}/2);M)_1$ and that the image of this cohomology class by $\rho$ is not zero getting the desired result.

Denote by $\widetilde{T_{12}},\widetilde{T_{13}}\in Aut^{UT}(\mathcal{N}_2^S)$ the following automorphims:
$$
\widetilde{T_{12}}=\left\{
\begin{array}{ll}
x_1 & \longmapsto x_1^{-1} \\
x_2 & \longmapsto x_2x_1  \\
x_3 & \longmapsto x_3
\end{array}
 \right. , \qquad
\widetilde{T_{13}}=\left\{
\begin{array}{ll}
x_1 & \longmapsto x_1^{-1} \\
x_2 & \longmapsto x_2 \\
x_3 & \longmapsto x_3 x_1
\end{array}
 \right. .$$

Consider a pull-back diagram
\begin{equation}
\label{diag_pull-back_(2,3)}
\xymatrix@C=7mm@R=10mm{
0 \ar@{->}[r] & Hom(H_2,\mathcal{L}^S_{2}) \ar@{->}[r] & Aut^{13}\;\mathcal{N}^S_{2} \ar@{->}[r]^-{\psi^S_1} & \mathbb{Z}/2 \ar@{->}[r]  & 1 \\
0 \ar@{->}[r] & Hom(H_2,\mathcal{L}^S_{2}) \ar@{->}[r] \ar@{=}[u] & Aut^{\text{UT}}\mathcal{N}^S_{2} \ar@{->}[r]^-{\psi^S_1} \ar@{<-}[u] & UT_n(\mathbb{Z}/2) \ar@{->}[r] \ar@{<-}[u] & 1 \\}
\end{equation}
where $Aut^{13}\;\mathcal{N}^S_{2}$ is the preimage of $T_{13}$ by $\psi^S_1.$

Define $s: \mathbb{Z}/2 \rightarrow Aut^{13}\mathcal{N}^S_{2}$ a map given by $s(T_{13})=\widetilde{T_{13}}.$ A direct computation shows that $\psi_1^S(s(T_{12}))= T_{12}$ and $s(T_{13})^2=1.$
Therefore the map $s$ is a section of $\psi^S_1: Aut^{13}\mathcal{N}^S_{2} \rightarrow \mathbb{Z}/2$ and hence the top extension of diagram \eqref{diag_pull-back_(2,3)} splits.
As a consequence, the cohomology class associated to the
extension \eqref{ext_UT_(2,3)} belongs to $H^2(UT_3(\mathbb{Z}/2);M)_1.$

Next we prove that the image of this cohomology class by
$$\rho: H^2(UT_3(\mathbb{Z}/2);M)_1 \longrightarrow H^1((\mathbb{Z}/2)^2; H^1(\mathbb{Z}/2;M))$$
is not zero.

By Corollary 5.10 in \cite{seven} the image of this cohomology class is zero if and only if for every $e\in Aut^{\text{UT}}\mathcal{N}^S_{2}$ there exists an element $m_e\in M$ such that
$$i(m_e)  es(\psi^S_1(e)T_{13}(\psi^S_1(e))^{-1})e^{-1} i(m_e)^{-1}=s(T_{13}).$$
Since $\langle T_{13} \rangle$ is the center of $UT_3(\mathbb{Z}/2),$
this equality becomes
\begin{equation}
\label{eq_split_condition}
i(m_e)  es(T_{13})e^{-1} i(m_e)^{-1}=s(T_{13}).
\end{equation}

In the sequel, we show that there exists an element $e\in Aut^{\text{UT}}\mathcal{N}^S_{2}$ such that for all $m_e\in M$ the equality \eqref{eq_split_condition} does not hold.
Consider $e= \widetilde{T_{12}}\in Aut^{\text{UT}}\mathcal{N}^S_{2}$
and an arbitrary element $i(m_e)\in i(Hom(H_2,\mathcal{L}^S_{2}))$ given by
$$i(m_e)=\left\{
\begin{array}{ll}
x_1 & \longmapsto x_1 a \\
x_2 & \longmapsto x_2 b \\
x_3 & \longmapsto x_3 c
\end{array}
\right. ,$$
with $a,b,c\in \mathcal{L}^S_{2}.$
Notice that $\Gamma^S_3=[\Gamma,\Gamma_2 \Gamma^2] (\Gamma_2 \Gamma^2)^2,$ the elements $\widetilde{T_{12}},$ $i(m_e)$ have order $2,$ the action of $Aut^{UT}(\mathcal{N}_2^S)$ on $\mathcal{L}_2^S$ factors through $UT_3(\mathbb{Z}/2)$ and $T_{13}$ is central in $UT_3(\mathbb{Z}/2).$
Then we have that
$$i(m_e)  es(T_{13})e^{-1} i(m_e)^{-1}=i(m_e) \widetilde{T_{12}}\widetilde{T_{13}}\widetilde{T_{12}} i(m_e)=$$
$$=\left\{
\begin{array}{lllll}
x_1 & \longmapsto x_1 a & \longmapsto x_1^{-1} T_{12}(a) & \longmapsto x_1 T_{13}T_{12}(a) & \longmapsto \\
x_2 & \longmapsto x_2 b & \longmapsto x_2x_1 T_{12}(b) & \longmapsto x_2x_1^{-1} T_{13}T_{12}(b) & \longmapsto \\
x_3 & \longmapsto x_3 c & \longmapsto x_3 T_{12}(c) & \longmapsto x_3x_1 T_{13}T_{12}(c) & \longmapsto
\end{array}
\right. $$

$$\begin{array}{ll}
 \longmapsto x_1^{-1} T_{12}T_{13}T_{12}(a)\;=\; x_1^{-1} T_{13}(a) & \longmapsto x_1^{-1} a T_{13}(a) \\
 \longmapsto x_2x_1^2 T_{12}T_{13}T_{12}(b)\;=\; x_2x_1^2 T_{13}(b) & \longmapsto x_2x_1^2 b T_{13}(b) \\
 \longmapsto x_3x_1^{-1} T_{12}T_{13}T_{12}(c)\;=\; x_3x_1x_1^2 T_{13}(c) & \longmapsto x_3x_1x_1^2 ac T_{13}(c).
\end{array}$$
Hence, to get the equality \eqref{eq_split_condition} it is necessary that $x_1^2=bT_{13}(b)$ for some $b\in \mathcal{L}_2^S.$
Notice that $\mathcal{L}_2^S$ is a $\mathbb{Z}/2$-vector space generated by
$$\langle x_1^2,x_2^2,x_3^2, [x_1,x_2], [x_1,x_3], [x_2,x_3] \rangle.$$
Moreover we have that
$$x_1^2T_{13}(x_1^2)=x_2^2T_{13}(x_2^2)=[x_1,x_2]T_{13}([x_1,x_2])=
[x_1,x_3]T_{13}([x_1,x_3])=1,$$
$$x_3^2T_{13}(x_3^2)=x_3^2(x_3x_1)^2=x_1^2[x_1,x_3],\quad [x_2,x_3]T_{13}([x_2,x_3])=[x_2,x_1].$$
As a consequence $x_1^2$ is not a linear combination of elements $wT_{13}(w)$ with $w$ a generator element of $\mathcal{L}_2^S$
and then there does not exist $b\in \mathcal{L}_2^S$ such that $bT_{13}(b).$
\end{itemize}

\end{proof}

Next we show that for $k\geq 2,$ the extensions of the beginning of this Section do not split.
In order to deal with this case, we first prove that, for a free group of finite rank, Lemma \eqref{lema_coop_general} and Corollary \eqref{cor_IA_bullet} give us the most efficient bound, in the sense that
$$\begin{aligned}
& [IA_k^\bullet(\Gamma),\Gamma^\bullet_l]<\Gamma^\bullet_{k+l}\\
& [IA^\bullet_k(\Gamma),IA^\bullet_l(\Gamma)] < IA^\bullet_{k+l}(\Gamma)
\end{aligned}
\qquad\text{but}\qquad
\begin{aligned}
& [IA_k^\bullet(\Gamma),\Gamma^\bullet_l]\nless\Gamma^\bullet_{k+l+1} \\
& [IA^\bullet_k(\Gamma),IA^\bullet_l(\Gamma)] \nless IA^\bullet_{k+l+1}(\Gamma).
\end{aligned}$$

\begin{prop}
\label{prop_cotas_IA_bullet,N_bullet}
Let $\Gamma$ be a free group of finite rank $n > 1.$ Then
\begin{enumerate}[i)]
\item $[IA_k^\bullet(\Gamma),\Gamma^\bullet_l]\nless\Gamma^\bullet_{k+l+1},$
\item $[IA^\bullet_k(\Gamma),IA^\bullet_l(\Gamma)] \nless IA^\bullet_{k+l+1}(\Gamma).$
\end{enumerate}
\end{prop}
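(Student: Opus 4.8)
The plan is to prove both inequalities by exhibiting explicit witnesses, keeping the argument uniform in $\bullet\in\{S,Z\}$ by reducing everything to commutator identities inside $\Gamma$ itself. The key observation is that \emph{inner} automorphisms already furnish enough elements of $IA_k^\bullet(\Gamma)$: if $g\in\Gamma_k^\bullet$ and $c_g\in Aut\,\Gamma$ denotes conjugation $x\mapsto gxg^{-1}$, then $[c_g,x]=c_g(x)x^{-1}=[g,x]$, and since $[\Gamma_k^\bullet,\Gamma]<\Gamma_{k+1}^\bullet$ we have $c_g\in IA_k^\bullet(\Gamma)$. This is available for every $n\geq2$ and turns both statements about the filtration on $Aut\,\Gamma$ into statements about the bracket on the associated graded $\bigoplus_m\mathcal L_m^\bullet$ of $\Gamma$.

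Concretely, for (i) I would take $\varphi=c_g$ with $g\in\Gamma_k^\bullet$ and $x\in\Gamma_l^\bullet$, so that $[\varphi,x]=[g,x]\in\Gamma_{k+l}^\bullet$ by Lemma \ref{lema_coop_general}; it then suffices to choose $g,x$ with $[g,x]\notin\Gamma_{k+l+1}^\bullet$, i.e. $[\bar g,\bar x]\neq0$ in $\mathcal L_{k+l}^\bullet$. For (ii) I would take $\varphi=c_a,\psi=c_b$ with $a\in\Gamma_k^\bullet$, $b\in\Gamma_l^\bullet$; since $g\mapsto c_g$ is a homomorphism one gets $[\varphi,\psi]=c_{[a,b]}$, which lies in $IA_{k+l}^\bullet(\Gamma)$ by Corollary \ref{cor_IA_bullet}. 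To see that it escapes $IA_{k+l+1}^\bullet(\Gamma)$ it is enough to produce a generator $x_i$ with $c_{[a,b]}(x_i)x_i^{-1}=[[a,b],x_i]\notin\Gamma_{k+l+2}^\bullet$. Taking $a=g$, $b=x$ from part (i), this reduces to the single extra claim that a nonzero class in $\mathcal L_{k+l}^\bullet$ has a nonzero bracket with some generator.

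The detection of non-membership I would carry out uniformly for both series through the mod-$p$ Magnus embedding $\mu\colon\Gamma\to\mathbb F_p\langle\langle X_1,\dots,X_n\rangle\rangle$, $x_i\mapsto1+X_i$. By the classical identification of the Zassenhaus series with the dimension-subgroup (Jennings--Lazard) filtration, $g$ lies in $\Gamma_m^Z$ but not in $\Gamma_{m+1}^Z$ exactly when the lowest-degree term of $\mu(g)-1$ sits in degree $m$; combined with the inclusion $\Gamma_m^S<\Gamma_m^Z$ this detects non-membership in $\Gamma_{m+1}^\bullet$ for \emph{both} $\bullet=S,Z$ at once. If I choose $g,x$ to be pure iterated commutators whose leading terms under $\mu$ are $P,Q$ in the free associative algebra $\mathbb F_p\langle X_1,\dots,X_n\rangle$ (homogeneous of degrees $k,l$), then $\mu([g,x])-1$ has degree-$(k+l)$ part equal to the bracket $[P,Q]=PQ-QP$, and $\mu([[g,x],x_i])-1$ has degree-$(k+l+1)$ part $[[P,Q],X_i]$. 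Thus (i) amounts to $[P,Q]\neq0$ and (ii) to $[[P,Q],X_i]\neq0$ for some $i$, the latter being automatic because a homogeneous element of positive degree in the free associative algebra cannot be central.

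The computational heart, and the step I expect to be the main obstacle, is therefore the nonvanishing $[P,Q]\neq0$ in all cases (every prime $p$, all $k,l\geq1$, and in particular the two-generator case where one cannot separate variables). I would handle it by a leading-monomial argument: fix the length-lexicographic order on words with $X_1<X_2$, take $P=\mathrm{ad}_{X_2}^{\,k-1}(X_1)$ and, when $k\neq l$, set $Q=\mathrm{ad}_{X_2}^{\,l-1}(X_1)$, while for $k=l$ set $Q=\mathrm{ad}_{X_1}^{\,l-1}(X_2)$. Since this order is multiplicative, the leading monomials of $PQ$ and $QP$ are the products of the leading monomials of the factors, each with coefficient $\pm1$; one checks that these two words differ (this is precisely where the split $k=l$ versus $k\neq l$ enters), so no cancellation occurs and $[P,Q]\neq0$ over $\mathbb F_p$. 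The remaining care is bookkeeping: verifying that the chosen commutators genuinely realize $P,Q$ as their leading terms under $\mu$, and confirming that descending from $\Gamma_m^Z$ to $\Gamma_m^S$ through $\Gamma_m^S<\Gamma_m^Z$ legitimately yields both statements at the same time.
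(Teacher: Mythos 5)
Your overall strategy is the same as the paper's: both arguments take inner automorphisms by iterated commutators as witnesses and reduce everything to showing that one further group commutator survives one more step of the filtration. Where you genuinely diverge is in how non-membership is detected. The paper argues from the product description of the layers together with a rather loose appeal to ``properties of free Lie algebras''; you instead push everything through the mod-$p$ Magnus embedding, invoke the Jennings--Lazard identification of $\Gamma^Z_m$ with the dimension series, use $\Gamma^S_m<\Gamma^Z_m$ to handle both series simultaneously, and close with a leading-monomial computation in the free associative algebra. This costs more setup but buys a concrete certificate of non-membership, a clean reason why part (ii) follows from part (i) (non-scalar elements of $\mathbb{F}_p\langle X_1,\dots,X_n\rangle$ are not central), and it forces you to confront the diagonal $k=l$, on which the paper's own witnesses silently degenerate: for $k=l$ the paper's $\gamma_1$ and $\gamma_2$ are the \emph{same} word, so $[\gamma_1,\gamma_2]=1$. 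In particular your choice $P=X_1$, $Q=X_2$ correctly treats $k=l=1$, which is the case actually consumed later in Proposition~\ref{prop_non-split_versal}.

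There is, however, one concrete failure: $k=l=2$. Your recipe gives $P=\mathrm{ad}_{X_2}(X_1)=X_2X_1-X_1X_2$ and $Q=\mathrm{ad}_{X_1}(X_2)=-P$, so $[P,Q]=-[P,P]=0$ and nothing is detected; at the level of words, the two leading monomials of $PQ$ and $QP$ are both $X_2X_1X_2X_1$, so the step ``one checks that these two words differ'' is precisely the step that breaks. For $n\geq 3$ the repair is immediate (take $Q=[X_3,X_1]$ and rerun the leading-monomial argument). For $n=2$ the defect is not in your choice of $Q$ but in the whole family of witnesses: the degree-$2$ component of the free Lie algebra on two generators is one-dimensional, hence $[\Gamma_2,\Gamma_2]\subset\Gamma_5\subset\Gamma^\bullet_5$, and \emph{no} pair consisting of conjugation by a weight-$2$ commutator against a weight-$2$ commutator can witness either statement. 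One must leave the inner-by-pure-commutator family, e.g.\ take $\varphi\in IA^\bullet_2(\Gamma)$ defined by $x_1\mapsto x_1[[x_1,x_2],x_2]$, $x_2\mapsto x_2$, and pair it with $[x_1,x_2]\in\Gamma^\bullet_2$ (respectively with $c_{[x_1,x_2]}$ for part (ii)); the relevant Magnus leading term is then $\pm\,\mathrm{ad}_{X_2}^{3}(X_1)$, which your own detection machinery shows is nonzero and non-central. With that one case patched, the proposal is correct, and in fact more complete than the published argument.
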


\begin{proof}
Consider the free group $\Gamma=\langle x_1,\ldots, x_n \rangle.$ Let $\gamma_1=[x_1,[x_2,[x_1,[x_2,\ldots]]]],$ a commutator of length $k$ alternating $x_1,$ $x_2,$ and $\gamma_2=[x_1,[x_2,[x_1,[x_2,\ldots]]]]$ a commutator of length $l$ alternating $x_1,$ $x_2.$

\textbf{i)} Take $f\in IA_k^\bullet(\Gamma)$ the inner automorphism given by the conjugation by $\gamma_1.$ Notice that $f\in IA_k^\bullet(\Gamma)$ because for every $x\in \Gamma,$
$$f(x)=\gamma_1x\gamma_1^{-1}= [\gamma_1,x]x=x \;(\text{mod } \Gamma_{k+1}).$$
Then we have that $[f,\gamma_2]\in [IA^\bullet_k(\Gamma),\Gamma^\bullet_l]$ and
$$[f,\gamma_2]=f(\gamma_2)\gamma_2^{-1}=\gamma_1\gamma_2\gamma_1^{-1}\gamma_2^{-1}=[\gamma_1,\gamma_2] \; \in \Gamma_{k+l}.$$
Notice that $[\gamma_1,\gamma_2]$ has weight $k+l.$ By properties of free Lie algebras (see \cite{hall3}), this commutator can only be annihilated by commutators of the same weight.
Thus, if $[\gamma_1,\gamma_2]\in \Gamma_{k+l+1}^\bullet,$ by the concrete description of the layers of our $p$-central series \eqref{zassenhaus_series_def}, \eqref{stallings_series_alternative_def}, we would have that $[\gamma_1,\gamma_2]\in (\Gamma_{k+l})^p.$ But, by construction $[\gamma_1,\gamma_2]$ is not a product of $p$-powers of commutators of weight $k+l.$
Therefore, $[\gamma_1,\gamma_2]$ does not belong to $\Gamma_{k+l+1}^\bullet.$

\textbf{ii)} Take $\varphi\in IA_k^\bullet(\Gamma),$ $\psi \in IA_l^\bullet(\Gamma)$ the inner automorphisms respectively given by the conjugation by $\gamma_1$ and $\gamma_2.$
Then we have that $[\varphi, \psi]\in [IA_k^\bullet(\Gamma),IA_l^\bullet(\Gamma)]$ and
$$
[\varphi, \psi](x_1)x_1^{-1}=[\gamma_1,\gamma_2]x_1[\gamma_1,\gamma_2]^{-1}x_1^{-1}=[[\gamma_1,\gamma_2],x_1]\;\in \Gamma_{k+l+1},
$$
which, as in $i),$ does not belong to $\Gamma_{k+l+2}^\bullet.$
Therefore, $[\phi,\psi]\notin IA^\bullet_{k+l+1}(\Gamma).$
\end{proof}

As a direct consequence we have the following result:

\begin{cor}
\label{coro_cota_IA_bullet,N_bullet}
The group $[IA^\bullet_k(\mathcal{N}_n^\bullet),(\mathcal{N}_n^\bullet)^\bullet_l]$ with $l+k=n$ and $[IA_k^\bullet(\mathcal{N}^\bullet_{n}),IA_l^\bullet(\mathcal{N}^\bullet_{n})]$ with $l+k=n-1,$ are not trivial.
\end{cor}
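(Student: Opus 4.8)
The plan is to deduce this directly from Proposition \ref{prop_cotas_IA_bullet,N_bullet} by transporting the explicit witnesses built there into the quotient $\mathcal{N}_n^\bullet=\Gamma/\Gamma_{n+1}^\bullet$. Those witnesses were designed to be nontrivial exactly modulo the layer $\Gamma_{n+1}^\bullet$, so they survive the projection $\Gamma\to\mathcal{N}_n^\bullet$; the only bookkeeping is to check that the relevant inner automorphisms descend and sit in the prescribed steps of the $IA^\bullet$-filtration.

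For the first assertion, with $k+l=n$, I would reuse the commutators $\gamma_1,\gamma_2$ of weights $k,l$ and the automorphism $f$ given by conjugation by $\gamma_1$. As $\Gamma_{n+1}^\bullet$ is characteristic, $f$ and $\gamma_2$ descend to $\bar f\in Aut(\mathcal{N}_n^\bullet)$ and $\bar\gamma_2\in(\mathcal{N}_n^\bullet)_l^\bullet$. Since $\gamma_1$ is a weight-$k$ commutator we have $\gamma_1\in\Gamma_k\subseteq\Gamma_k^\bullet$, whence $[\bar\gamma_1,\bar x]\in(\mathcal{N}_n^\bullet)_{k+1}^\bullet$ for all $x$ and therefore $\bar f\in IA_k^\bullet(\mathcal{N}_n^\bullet)$. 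The element $[\bar f,\bar\gamma_2]$ equals the image of $[\gamma_1,\gamma_2]$, which lies in $\Gamma_n\setminus\Gamma_{n+1}^\bullet$ by part i) of Proposition \ref{prop_cotas_IA_bullet,N_bullet} (here $k+l+1=n+1$), and is thus a nonzero element of $[IA_k^\bullet(\mathcal{N}_n^\bullet),(\mathcal{N}_n^\bullet)_l^\bullet]$.

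For the second assertion, with $k+l=n-1$, I would take $\varphi,\psi$ to be conjugation by $\gamma_1,\gamma_2$; as above they descend to $\bar\varphi\in IA_k^\bullet(\mathcal{N}_n^\bullet)$ and $\bar\psi\in IA_l^\bullet(\mathcal{N}_n^\bullet)$. Evaluating the commutator and invoking the computation of part ii) of Proposition \ref{prop_cotas_IA_bullet,N_bullet} gives $[\bar\varphi,\bar\psi](\bar x_1)\bar x_1^{-1}=\overline{[[\gamma_1,\gamma_2],x_1]}$. Because $k+l+2=n+1$, this lands in $\Gamma_n\setminus\Gamma_{n+1}^\bullet$, so its image in $\mathcal{N}_n^\bullet$ is nontrivial; hence $[\bar\varphi,\bar\psi]$ moves $\bar x_1$ and is a nontrivial element of $[IA_k^\bullet(\mathcal{N}_n^\bullet),IA_l^\bullet(\mathcal{N}_n^\bullet)]$.

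There is no genuine obstacle here: the entire content was already isolated in Proposition \ref{prop_cotas_IA_bullet,N_bullet}, and the corollary is merely its reflection in the finite quotient. The one thing to be careful about is the identification $(\mathcal{N}_n^\bullet)_l^\bullet=\Gamma_l^\bullet/\Gamma_{n+1}^\bullet$ for $l\le n+1$ and the corresponding description of $IA_k^\bullet(\mathcal{N}_n^\bullet)$, which guarantee both that the descended maps belong to the stated filtration steps and that ``nontrivial in $\Gamma$ modulo $\Gamma_{n+1}^\bullet$'' is the same as ``nontrivial in $\mathcal{N}_n^\bullet$''.
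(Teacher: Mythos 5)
Your proposal is correct and follows essentially the same route as the paper: both transport the explicit witnesses of Proposition \ref{prop_cotas_IA_bullet,N_bullet} to the quotient $\mathcal{N}_n^\bullet=\Gamma/\Gamma_{n+1}^\bullet$, where the choices $k+l=n$ and $k+l=n-1$ make ``nontrivial modulo $\Gamma_{n+1}^\bullet$'' coincide with ``nontrivial in $\mathcal{N}_n^\bullet$''. Your added bookkeeping on why the descended automorphisms land in the stated filtration steps is a detail the paper leaves implicit, but the argument is the same.
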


\begin{proof}
We use the same notation that we used in proof of Proposition \eqref{prop_cotas_IA_bullet,N_bullet}.
Consider the elements $f\in IA_k^\bullet(\Gamma)$ and $x\in \Gamma^\bullet_l$ as in the proof of Proposition \eqref{prop_cotas_IA_bullet,N_bullet}. These elements induce elements $\overline{f}\in IA_k^\bullet(\Gamma)$ and $\overline{x}\in \Gamma^\bullet_l.$ Moreover, by Proposition \eqref{prop_cotas_IA_bullet,N_bullet}, we have that
$$[\overline{f},\overline{x}]=\overline{[f,x]}=\overline{[\gamma_1,\gamma_2]}\neq 1.$$
Therefore there is an element in $[IA^\bullet_k(\mathcal{N}_n^\bullet),(\mathcal{N}_n^\bullet)^\bullet_l]$ which is not trivial.

Take $\varphi\in IA_k^\bullet(\Gamma),$ $\psi\in IA_l^\bullet(\Gamma),$ and $x_1\in \Gamma.$ These elements induce elements $\overline{\varphi}\in IA_k^\bullet(\mathcal{N}_n^\bullet),$ $\overline{\psi}\in IA_l^\bullet(\mathcal{N}_n^\bullet),$ and $\overline{x_1}\in \mathcal{N}_n^\bullet.$
Moreover, by Proposition \eqref{prop_cotas_IA_bullet,N_bullet}, we have that
$$[[\overline{\varphi},\overline{\psi}],\overline{x_1}]=\overline{[[\varphi,\psi],x_1]}=\overline{[[\gamma_1,\gamma_2],x_1]}\neq 1.$$
Therefore there is an element in $[IA_k^\bullet(\mathcal{N}^\bullet_{n}),IA_l^\bullet(\mathcal{N}^\bullet_{n})]$ which is not trivial.
\end{proof}

\begin{prop}
\label{prop_non-split_versal}
Let $\Gamma$ be a free group of finite rank $n > 1.$ The extension
\begin{equation*}
\xymatrix@C=7mm@R=10mm{0 \ar@{->}[r] & Hom(\mathcal{N}^\bullet_1,\mathcal{L}^\bullet_{k+1}) \ar@{->}[r]^-{i} & IA^p(\mathcal{N}^\bullet_{k+1}) \ar@{->}[r]^-{\psi^\bullet_k} & IA^p(\mathcal{N}^\bullet_k ) \ar@{->}[r] & 1, }
\end{equation*}
does not split.
\end{prop}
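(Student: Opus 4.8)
The plan is to argue by contradiction, exploiting the centrality established in Proposition \ref{prop_cent_ext_IA} together with the sharp commutator estimates of Corollary \ref{coro_cota_IA_bullet,N_bullet}. Since we are in the range $k\geq 2$, the kernel $Hom(\mathcal{N}^\bullet_1,\mathcal{L}^\bullet_{k+1})$ is precisely $IA_k^\bullet(\mathcal{N}^\bullet_{k+1})=\ker\psi^\bullet_k$, and it is central in $IA^p(\mathcal{N}^\bullet_{k+1})$. The elementary fact I would rely on is that, in any central extension $0\to A\to E\xrightarrow{\pi}G\to 1$, the commutator $[\tilde g,\tilde h]$ of two lifts of elements $g,h\in G$ is independent of the chosen lifts, because changing a lift by a central element does not alter the commutator. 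Hence, if a section $s\colon G\to E$ existed, we would have $[\tilde g,\tilde h]=[s(g),s(h)]=s([g,h])$ for all $g,h$, so that $[g,h]=1$ in $G$ would force $[\tilde g,\tilde h]=1$ for every pair of lifts. It therefore suffices to exhibit $\bar\varphi,\bar\psi\in IA^p(\mathcal{N}^\bullet_k)$ with $[\bar\varphi,\bar\psi]=1$ that admit lifts $\varphi,\psi\in IA^p(\mathcal{N}^\bullet_{k+1})$ with $[\varphi,\psi]\neq 1$.

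To produce these elements I would recycle the inner automorphisms of Proposition \ref{prop_cotas_IA_bullet,N_bullet}. Writing $\Gamma=\langle x_1,\dots,x_n\rangle$, I fix integers $a,b\geq 1$ with $a+b=k$ and let $\gamma_1,\gamma_2$ be the commutators alternating $x_1,x_2$ of weights $a$ and $b$. As $\Gamma_a\subseteq\Gamma^\bullet_a$, conjugation by $\gamma_1$ and $\gamma_2$ yields inner automorphisms in $IA_a^\bullet(\Gamma)$ and $IA_b^\bullet(\Gamma)$, which descend to both $\mathcal{N}^\bullet_{k+1}$ and $\mathcal{N}^\bullet_k$; call the resulting automorphisms $\varphi,\psi$ and $\bar\varphi,\bar\psi$ respectively. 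By naturality of the projection, $\psi^\bullet_k(\varphi)=\bar\varphi$ and $\psi^\bullet_k(\psi)=\bar\psi$, so $\varphi,\psi$ are genuine lifts. Since conjugation is a homomorphism into the inner automorphism group, both $[\varphi,\psi]$ and $[\bar\varphi,\bar\psi]$ are conjugation by $[\gamma_1,\gamma_2]$, a commutator of weight $a+b=k$.

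The contradiction then comes from comparing the two levels. At level $k$ one has $[\gamma_1,\gamma_2]\in\Gamma_k$, whence $[[\gamma_1,\gamma_2],x]\in\Gamma_{k+1}\subseteq\Gamma^\bullet_{k+1}$ for every $x$; thus conjugation by $[\gamma_1,\gamma_2]$ is trivial on $\mathcal{N}^\bullet_k=\Gamma/\Gamma^\bullet_{k+1}$, i.e. $[\bar\varphi,\bar\psi]=1$. At level $k+1$, Corollary \ref{coro_cota_IA_bullet,N_bullet} applied to $\mathcal{N}^\bullet_{k+1}$ with $a+b=(k+1)-1$ gives that $[\varphi,\psi]$, witnessed by $[[\gamma_1,\gamma_2],x_1]\notin\Gamma^\bullet_{k+2}$, is a nonzero element of $IA_k^\bullet(\mathcal{N}^\bullet_{k+1})=\ker\psi^\bullet_k$. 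This yields $[\bar\varphi,\bar\psi]=1$ yet $[\varphi,\psi]\neq 1$, contradicting the splitting criterion of the first paragraph, so no section can exist. I expect the only substantive ingredient to be the sharpness in Corollary \ref{coro_cota_IA_bullet,N_bullet} (ultimately Proposition \ref{prop_cotas_IA_bullet,N_bullet}), which guarantees that the commutator, already trivial in $\mathcal{N}^\bullet_k$, survives one level higher in $\mathcal{N}^\bullet_{k+1}$; everything else is the formal bookkeeping of central extensions.
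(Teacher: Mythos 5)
Your argument is correct and is essentially the paper's: both proofs reduce non-splitting to the existence of a nontrivial element of $[IA^p(\mathcal{N}^\bullet_{k+1}),IA^p(\mathcal{N}^\bullet_{k+1})]$ lying inside the central kernel $Hom(\mathcal{N}^\bullet_1,\mathcal{L}^\bullet_{k+1})$, which is supplied by Corollary \ref{coro_cota_IA_bullet,N_bullet} via the inner automorphisms given by conjugation by the commutators $\gamma_1,\gamma_2$. The only difference is packaging: the paper runs the obstruction through the five-term exact sequence (a splitting would yield a retraction $IA^p(\mathcal{N}^\bullet_{k+1})\to Hom(\mathcal{N}^\bullet_1,\mathcal{L}^\bullet_{k+1})$ restricting to the identity, which is impossible because such a homomorphism kills commutators), whereas you phrase it as the independence of commutators of lifts in a central extension; these are two formulations of the same obstruction.
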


\begin{proof}
Consider the central extension
\begin{equation}
\label{ext_split_k}
\xymatrix@C=7mm@R=10mm{0 \ar@{->}[r] & Hom(\mathcal{N}^\bullet_1,\mathcal{L}^\bullet_{k+1}) \ar@{->}[r]^-{i} & IA^p(\mathcal{N}^\bullet_{k+1}) \ar@{->}[r]^-{\psi^\bullet_k} & IA^p(\mathcal{N}^\bullet_k ) \ar@{->}[r] & 1. }
\end{equation}
The associated 5-term sequence give us an exact sequence
\begin{equation}
\label{les_5-term}
\begin{gathered}
\xymatrix@C=7mm@R=10mm{ Hom(IA^p(\mathcal{N}^\bullet_{k+1});Hom(\mathcal{N}^\bullet_1,\mathcal{L}^\bullet_{k+1})) \ar@{->}[r]^-{res} & Hom(Hom(\mathcal{N}^\bullet_1,\mathcal{L}^\bullet_{k+1}),Hom(\mathcal{N}^\bullet_1,\mathcal{L}^\bullet_{k+1}))\ar@{->}[r] &}
\\
\xymatrix@C=7mm@R=10mm{ \ar@{->}[r]^-{\delta} & H^2(IA^p(\mathcal{N}^\bullet_k ); Hom(\mathcal{N}^\bullet_1,\mathcal{L}^\bullet_{k+1})).}
\end{gathered}
\end{equation}
Then the cohomology class associated to the extension \eqref{ext_split_k} is given by $\delta(id).$

Suppose that the central extension \eqref{ext_split_k} splits, i.e. $\delta(id)=0.$ Since \eqref{les_5-term} is exact, there would be an element $f\in Hom(IA^p(\mathcal{N}^\bullet_{k+1});Hom(\mathcal{N}^\bullet_1,\mathcal{L}^\bullet_{k+1}))$ such that $res(f)=id.$

Notice that for every element $x\in [IA^p(\mathcal{N}^\bullet_{k+1}),IA^p(\mathcal{N}^\bullet_{k+1})],$ one has that $f(x)=0$ because $Hom(\mathcal{N}^\bullet_1,\mathcal{L}^\bullet_{k+1})$ is abelian.
On the other hand, by Corollary \eqref{coro_cota_IA_bullet,N_bullet},
$$[IA^p(\mathcal{N}^\bullet_{k+1}),IA^\bullet_{k-1}(\mathcal{N}^\bullet_{k+1})]\neq 1.$$
In addition, by Corollary \eqref{cor_IA_bullet},
$$\psi^\bullet_k([IA^p(\mathcal{N}^\bullet_{k+1}),IA^\bullet_{k-1}(\mathcal{N}^\bullet_{k+1})])=[IA^p(\mathcal{N}^\bullet_{k}),IA^\bullet_{k-1}(\mathcal{N}^\bullet_{k})]\leq IA^\bullet_{k}(\mathcal{N}^\bullet_{k})=1.$$
Then, by short exact sequence \eqref{ses_ZS},
$[IA^p(\mathcal{N}^\bullet_{k+1}),IA^\bullet_{k-1}(\mathcal{N}^\bullet_{k+1})] \hookrightarrow Hom(\mathcal{N}^\bullet_1,\mathcal{L}^\bullet_{k+1}).$

As a consequence, $res(f)$ can not be the identity.
\end{proof}

\begin{rem}
We point out that using the same argument for the lower central series instead the mop $p$ central series one gets that $[IA_k(\Gamma),IA_m(\Gamma)]<IA_{k+m}(\Gamma)$ is the most efficient bound in the sense that $[IA_k(\Gamma),IA_l(\Gamma)] \nless IA_{k+l+1}(\Gamma),$ and as a consequence, the central extension
\begin{equation*}
\xymatrix@C=7mm@R=10mm{0 \ar@{->}[r] & Hom(\mathcal{N}_1,\mathcal{L}_{k+1}) \ar@{->}[r]^-{i} & IA(\mathcal{N}_{k+1}) \ar@{->}[r]^-{\psi_k} & IA(\mathcal{N}_k ) \ar@{->}[r] & 1 }
\end{equation*}
does not split.
\end{rem}

Considering a pull-back diagram
\begin{equation*}
\xymatrix@C=7mm@R=10mm{0 \ar@{->}[r] & Hom(\mathcal{N}^\bullet_1,\mathcal{L}^\bullet_{k+1})\ar@{=}[d] \ar@{->}[r]^-{i} & IA^p(\mathcal{N}^\bullet_{k+1})\ar@{->}[r]^-{\psi^\bullet_k} \ar@{^{(}->}[d] & IA^p(\mathcal{N}^\bullet_k) \ar@{^{(}->}[d] \ar@{->}[r] & 1\\
0 \ar@{->}[r] & Hom(\mathcal{N}^\bullet_1,\mathcal{L}^\bullet_{k+1}) \ar@{->}[r]^-{i} & Aut\;\mathcal{N}^\bullet_{k+1} \ar@{->}[r]^-{\psi^\bullet_k} & Aut \;\mathcal{N}^\bullet_k \ar@{->}[r] & 1
,}
\end{equation*}
we have that the top extension of this diagram splits if the bottom extension of this diagram splits (c.f. Section IV.3 of \cite{brown}). Therefore,
as a direct consequence of Proposition \ref{prop_non-split_versal}, we get the expected result:

\begin{cor}
Let $\Gamma$ be a free group of finite rank $n > 1.$ The extension
\begin{equation*}
\xymatrix@C=7mm@R=10mm{0 \ar@{->}[r] & Hom(\mathcal{N}^\bullet_1,\mathcal{L}^\bullet_{k+1}) \ar@{->}[r]^-{i} & Aut\;\mathcal{N}^\bullet_{k+1} \ar@{->}[r]^-{\psi^\bullet_k} & Aut \;\mathcal{N}^\bullet_k \ar@{->}[r] & 1 }
\end{equation*}
does not split for $k\geq 2.$
\end{cor}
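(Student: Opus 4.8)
The plan is to deduce non-splitting of the full automorphism extension from the already-established non-splitting of its restriction to the $IA^p$-subgroups, via a pull-back argument. First I would record the pull-back square relating the two extensions. Since an automorphism $\Phi$ of $\mathcal{N}^\bullet_{k+1}$ covering $\phi\in Aut\;\mathcal{N}^\bullet_k$ acts on the common quotient $\mathcal{N}^\bullet_1$ exactly as $\phi$ does, one has $IA^p(\mathcal{N}^\bullet_{k+1})=(\psi^\bullet_k)^{-1}\bigl(IA^p(\mathcal{N}^\bullet_k)\bigr)$, and the kernel $Hom(\mathcal{N}^\bullet_1,\mathcal{L}^\bullet_{k+1})=Ker(\psi^\bullet_k)$ is unchanged under restriction. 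Hence the central extension \eqref{ses_ZS} is precisely the pull-back of the $Aut$-extension along the inclusion $IA^p(\mathcal{N}^\bullet_k)\hookrightarrow Aut\;\mathcal{N}^\bullet_k$, with the identity on kernels.

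Next I would invoke the functorial fact that a pull-back of a split extension is again split (c.f. Section IV.3 of \cite{brown}): if the $Aut$-extension admitted a section $s\colon Aut\;\mathcal{N}^\bullet_k\to Aut\;\mathcal{N}^\bullet_{k+1}$, then the composite of $s$ with the inclusion $IA^p(\mathcal{N}^\bullet_k)\hookrightarrow Aut\;\mathcal{N}^\bullet_k$ would land in $(\psi^\bullet_k)^{-1}\bigl(IA^p(\mathcal{N}^\bullet_k)\bigr)=IA^p(\mathcal{N}^\bullet_{k+1})$, and would therefore provide a section of the restricted extension. Taking the contrapositive, non-splitting of the $IA^p$-extension forces non-splitting of the $Aut$-extension. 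I would then apply Proposition \ref{prop_non-split_versal}, which guarantees that the $IA^p$-extension does not split for $k\geq 2$, to conclude the same for the $Aut$-extension in this range.

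The restriction to $k\geq 2$ is essential, and it is the point I would flag rather than a genuine obstacle, since the hard work has already been carried out inside Proposition \ref{prop_non-split_versal}. For $k=1$ the base $IA^p(\mathcal{N}^\bullet_1)$ is trivial (an automorphism of the elementary abelian group $\mathcal{N}^\bullet_1$ acting trivially on $\mathcal{N}^\bullet_1$ is the identity), so the $IA^p$-extension splits tautologically and the pull-back argument yields no information—consistent with the genuine splittings of the $Aut$-extension recorded earlier for $k=1$. Thus the only thing to verify here is that the pull-back square is correctly set up, so that a hypothetical section below transports to a section above; once that is in place, the conclusion is immediate from Proposition \ref{prop_non-split_versal}.
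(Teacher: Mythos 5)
Your argument is correct and is essentially the paper's own proof: the paper likewise exhibits the $IA^p$-extension as the pull-back of the $Aut$-extension along $IA^p(\mathcal{N}^\bullet_k)\hookrightarrow Aut\;\mathcal{N}^\bullet_k$ (with identity on kernels), cites Section IV.3 of \cite{brown} for the fact that a section downstairs would restrict to a section upstairs, and concludes from Proposition \ref{prop_non-split_versal}. Your extra remark explaining why the argument is vacuous for $k=1$ is a welcome clarification but not a departure from the paper's route.
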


\vspace{1cm}
\textbf{Acknowledgements}

The author would like to express his gratitude to the Professor Wolfgang Pitsch for his encouragement and guidance throughout all this work.


\newpage
\bibliography{biblio}{}
\bibliographystyle{abbrv}

\end{document}